\author{William Troiani}
\address{School of Mathematics and Statistics, University of Melbourne\\
	Parkville 3000. Melbourne, Vic, Australia\\[5pt]
}
\title{The internal logic and finite colimits}
\keywords{Topos, finite colimits, internal language, type theory, logic.}
\newcommand{\bb}[1]{\mathbb{#1}}
\newcommand{\call}[1]{\mathcal{#1}}
\newcommand{\und}[1]{\underline{\hspace{#1 cm}}}
\newcommand{\adj}[1]{\text{\textopencorner}{#1}\text{\textcorner}}
\newcommand{\lto}{\longrightarrow}
\newenvironment{scprooftree}[1]%
{\gdef\scalefactor{#1}\begin{center}\proofSkipAmount \leavevmode}%
	{\scalebox{\scalefactor}{\DisplayProof}\proofSkipAmount \end{center} }
\begin{document}
	
	\maketitle
	
	\begin{abstract}
		We describe how finite colimits can be described using the internal lanuage, also known as the Mitchell-Benabou language, of a topos, provided the topos admits countably infinite colimits. This description is based on the set theoretic definitions of colimits and coequalisers, however the translation is not direct due to the differences between set theory and the internal language, these differences are described as \emph{internal} versus \emph{external}. Solutions to the hurdles which thus arise are given.
	\end{abstract}
	
	%%% ----------------------------------------------------------------------
	
	\tableofcontents
	
	\section{Introduction}
	Type theories were originally suggested by Russell and Whitehead \cite{russell} as a foundation for mathematics, for a historical view, see \cite[p.124]{lambekscott}. For an introduction to type theories, see \cite[\S 3.3]{su}. The idea of approaching logic from the perspective of category theory is originally due to Lawvere. In the modern form, associated to every elementary topos $\call{E}$ are several type theories. We will consider a fixed such type theory and refer to it as the \emph{internal logic}. This type theory can be used to describe constructions of objects and morphisms in $\call{E}$, and also to prove things about them. For example, if $f,g$ are morphisms in $\call{E}$ both with domain $A$ and both with codomain $B$, then if the following is derivable in the internal logic:
	\begin{equation}
		\vdash \forall a:A,\text{ }fa = ga
	\end{equation}
	then the morphisms $f,g$ are equal. More generally, the internal logic is \emph{sound} in that any formula which is provable inside the internal logic represents some true statement about the topos in question (see \cite[\S D1.3.2, D4.2.3, D4.3.3, D4.4.6]{Johnstone} for a discussion on soundness). In this paper we show how the type theory can be used to describe finite coproducts \ref{sec:coproducts} and coequalisers \ref{sec:coequaliser}, which then leads to a description of finite colimits \ref{sec:colimits}. We remark that one must also describe an initial object, but this is easy and is done in Section \ref{sec:initial_object}.
	
	The internal logic offers many syntactical similarities with ZF set theory, in fact, due to the existence of the internal logic, elementary topoi are often referred to as ``generalised theories of sets" \cite[\S Prologue, page 1]{MM}. Hence, our approach will be to use the standard set-theoretic descriptions of finite coproducts and of coequalisers to form terms in the interal logic whose interpretations (ie, associated objects/morphisms) will be finite coproducts and coequalisers respectively \emph{of the original topos}. Here, we must observe a subtlety: recall the standard construction of the coequaliser $\operatorname{Coeq}(f,g)$ of two functions of sets $f,g:A \lto B$: first we define for each $n > 0$ a relation $R_n$ on $B$ as follows, a pair $(b,b')$ of elements in $B$ is in $R_n$ if and only if there exists a sequence $(a_0,...,a_n)$ of elements in $A$ such that for each $i = 0,...,n-1$ we either have $f(a_i) = g(a_{i+1})$ or $f(a_{i+1}) = g(a_i)$. Then, the Coequaliser $\operatorname{Coeq}(f,g)$ is given by the quotient $B/\cup_{n = 1}^\infty R_n$. Our method for mimicking this description using the internal logic will require for each object $C$ in our topos $\call{E}$ that the category $\operatorname{Sub}(C)$ of subobjects of $C$ admits countably infinite coproducts. A circumstance where this holds is when $\call{E}$ admits countably infinite coproducts, hence we assume this is true of the topoi that we work with.
	
	Despite what has been said, to describe finite colimits using the internal logic of a topos, it is not as simple as directly translating the standard constructions in ZF, as there is a subtle but significant difference between the interal logic and ZF set theory. In ZF set theory, the set constructor union is \emph{extensional} and in the internal logic it is \emph{intensional}. We now explain what we mean by this point along with our solution.
	
	Let $A,B$ be sets. Recall that the disjoint union $A \sqcup B$ can be defined by first constructing sets $A^\ast,B^\ast$ which are defined by the following rules.
	\begin{equation}
		(a,1) \in A^\ast \text{ iff }a \in A,\qquad (b,2) \in B^\ast \text{ iff } b \in B
	\end{equation}
	Then, the disjoint union is given by taking the union $A^\ast \cup B^\ast$ of these sets. Translating this into the internal logic, we run into trouble because we can only take the union of two subobjects \emph{of the same object}. That the union of \emph{any} pair of sets can be constructed in ZF is what we mean by the union being \emph{extensional}. That the union of two subobjects via the internal logic can only be taken once they have each been embedded into a common object is what we mean by the union being \emph{intensional}.
	
	Our solution is given precisely by Corollary \ref{cor:embedded_subobject}, but we give the general idea here. Continuing with the same sets $A,B$, we consider the sets, each subsets of the set $\call{P}A \times \call{P}B$.
	\begin{align}
		\lbrace A \rbrace &:= \big\lbrace (\lbrace a \rbrace,\varnothing) \in \call{P}A \times \call{P}B \mid a \in A \rbrace\\
		 \lbrace B \big\rbrace &:= \big\lbrace (\varnothing, \lbrace b \rbrace) \in \call{P}A \times \call{P}B \mid b \in B \big\rbrace
	\end{align}
	We notice that $A \cong \lbrace A \rbrace$ and $B \cong \lbrace B \rbrace$. A coproduct of $A,B$ can then be taken to be the set
	\begin{equation}
		A \coprod B := \lbrace A \rbrace \cup \lbrace B \rbrace
	\end{equation}
	along with the maps $A \lto A \coprod B,\text{ }a \longmapsto (\lbrace a \rbrace, \varnothing)$ and $B \lto A \coprod B,\text{ }b \longmapsto (\varnothing, \lbrace b \rbrace)$.
	
	We have already mentioned the standard construction of the coequaliser of two functions, we note here that this construction does \emph{not} make use of union at any point, hence the translation of this construction into the internal logic is direct, and will not use Corollary \ref{cor:embedded_subobject}. Indeed, the main difficulty in describing finite coproduct using the interal language lies in describing finite colimits, whereas the description of coequalisers is straight forward.
	
	\section{Finite colimits in the topos of sets}\label{sec:in_sets}
	As mentioned in the Introduction, the union operator in ZF is \emph{extensional} whereas the union operator in the internal logic is \emph{intensional}. This means that the standard construction of the disjoint union of two sets cannot be directly translated into the internal logic. We show in this Section our general method for constructing finite colimits but in the special case of the topos of sets. We begin with the construction of binary coproducts.
	
	Let $A,B$ be sets, we consider the following subset of the product of power sets $\call{P}A \times \call{P}B$. We suggestively denote this set $A \coprod B$.
	\begin{align}
		\begin{split}
			\label{eq:bin_disjoint_union}A \coprod B := \big\lbrace (X,Y) \in \call{P}A \times \call{P}B &\mid (\exists a \in A, X = \lbrace a \rbrace \text{ and }Y = \varnothing)\\
		&\text{ or } (\exists b \in B, X = \varnothing \text{ and }Y = \lbrace b \rbrace )\big\rbrace
		\end{split}
	\end{align}
	We notice that this subset is in bijection with the standard construction of the coproduct of $A$ and $B$, ie, the disjoint union.
	\begin{align*}
		A \sqcup B &\lto A \coprod B\\
		a &\longmapsto (\lbrace a \rbrace, \varnothing)\\
		b &\longmapsto (\varnothing, \lbrace b \rbrace)
	\end{align*}
	Consider now an arbitrary finite set $J = \lbrace X_1,...,X_n\rbrace$ of sets. In a standard way we can construct the disjoint union of the $n$ elements of $J$ from the binary disjoint union just considered. To ease notation we let $\prod_{i = 1}^n\call{P}X_i, \coprod_{i = 1}^n X_i$ denote respectively 
	\begin{equation*}
		\big(\hdots (\call{P}X_1 \times \call{P}X_2)\times \hdots \times\call{P}X_n\big)
		\end{equation*}
	and
	\begin{equation*}
		\big(\hdots (X_1 \coprod X_2) \coprod \hdots \coprod X_n\big)
		\end{equation*}
	Similarly for future objects. We have the following.
	\begin{equation}
		\begin{tikzcd}
			\prod_{i=1}^n \call{P}X_1\\
			\coprod_{i = 1}^n X_i\arrow[u,rightarrowtail]
		\end{tikzcd}
	\end{equation}
	Moving towards a finite colimit, we now consider a finite set $\lbrace f_1,...,f_m\rbrace$ of functions where each $f_i$ has domain $\operatorname{dom}f_i$ and codomain $\operatorname{cod}f_i$ with $\operatorname{dom}f_i, \operatorname{cod}f_i \in J$. We define two morphisms, both with domain $\prod_{i = 1}^m \call{P}\operatorname{dom}f_i$ and codomain $\prod_{i = 1}^n \call{P}X_i$. The first of these morphisms is easy to describe, it is the product of the morphisms $\operatorname{id}_{\call{P}\operatorname{dom}f_i}$ which we denote by $f$. The second morphism is more complicated. In general, any function $g: D \lto E$ induces a function $\call{P}f: \call{P}D \lto \call{P}E$ which given a subset $C \subseteq D$ applies the function $f$ to each element $d \in D$ to obtain a subset $f(D) \in \call{P}E$. We consider the product of each of these morphisms $\call{P} f_i$ (for $i = 1,...,m$) and denote this product $h$.  Both $f$ and $h$ descend to morphisms $g_0,g_1: \coprod_{i = 1}^m \operatorname{dom}f_i\lto \coprod_{i = 1}^n X_i$ where $g_0$ is the descendent of $f$ and $g_1$ is the descendent of $h$. We now have two commuting diagrams, the first is the square given by removing the bottom most functions from both the top and bottom rows of Diagram \ref{eq:product_diag_set}, the second is the square given by removing the top most functions.
	\begin{equation}\label{eq:product_diag_set}
		\begin{tikzcd}
			\prod_{i = 1}^m \call{P}\operatorname{dom}f_i\arrow[r,shift left, "{f}"]\arrow[r,shift right, swap, "{h}"] & \prod_{i  =1}^n \call{P} X_i\\
			\coprod_{i = 1}^m \operatorname{dom}f_i\arrow[r,shift left, "{g_0}"]\arrow[r,shift right, swap, "{g_1}"]\arrow[u,rightarrowtail] & \coprod_{i = 1}^n X_i\arrow[u,rightarrowtail]
		\end{tikzcd}
	\end{equation}
	In general, how do we describe the coequaliser of two functions $l_1,l_2: D \lto E$? First we let $\call{R}$ denote the smallest equivalence relation on $E$ so that for all $d \in D$, we have $(l_1(d),l_2(d)) \in \call{R}$. Then, given $e \in E$ we denote by $[e]$ the equivalence class represented by $e$. The standard construction of the coequaliser is then given as follows.
	\begin{equation}
		\operatorname{Coeq}(l_1,l_2) := \lbrace [e] \mid e \in E\rbrace
	\end{equation}
	This is an element of $\call{P}\call{P}E$.
	
	The coequaliser should also admit a morphism $b: \coprod_{i = 1}^n X_i \lto \operatorname{Coeq}(g_0,g_1)$. This will be given be arguing that there exists a morphism $d: \prod_{i = 1}^n \call{P}X_i \lto \call{P}\Big(\prod_{i =1 }^n \call{P}X_i\Big)$ which descend to $b$. Towards this end, we define the following.
	\begin{align*}
		d: \prod_{i = 1}^n\call{P}X_i &\lto \call{P}\Big(\prod_{i = 1}^n \call{P}X_i\Big)\\
		Z &\longmapsto [Z]
	\end{align*}
	Indeed the function $d$ descends to a function $b$ as required. We can now have a commutative Diagram \eqref{eq:com_diag_set} which is completed by the following function.
	\begin{align*}
		c: \operatorname{Coeq}(g_0,g_1) &\lto \call{P}\Big(\prod_{i = 1}^n \call{P}X_i\Big)\\
		[(z_1,\hdots,z_n)] &\longmapsto [(\lbrace z_1 \rbrace, \hdots, \lbrace z_n \rbrace)]
	\end{align*}
	The point is that by beginning with a colimit Diagram (the bottom row of \eqref{eq:com_diag_set}) we can construct the morphisms in the top row of \eqref{eq:com_diag_set} which descend to the original Diagram. It is this top row which will offer a simple translation into the internal logic. The rest of the work will be to prove that the rest of the Diagram behaves as expected.
	\begin{equation}\label{eq:com_diag_set}
		\begin{tikzcd}[column sep = huge]
			\prod_{i = 1}^m \call{P}\operatorname{dom}f_i\arrow[r,shift left, "{h_0}"]\arrow[r,shift right,swap, "{h_1}"] & \prod_{i = 1}^n \call{P}X_i\arrow[r,"{d}"] & \call{P}\Big(\prod_{i = 1}^n \call{P}X_i\Big)\\
			\coprod_{i = 1}^m \operatorname{dom}f_i\arrow[u,rightarrowtail]\arrow[r,shift left, "{g_0}"]\arrow[r,shift right, swap, "{g_1}"] & \coprod_{i = 1}^n X_i\arrow[u,rightarrowtail]\arrow[r] & \operatorname{Coeq}(g_0,g_1)\arrow[u,rightarrowtail, "{c}"]
		\end{tikzcd}
	\end{equation}
	This rather innocuous description of finite colimits is then generalised to arbitrary topoi in Section \ref{sec:arbitrary_topos}. In Section \ref{sec:type_theory} we introduce the type theory and in Section \ref{sec:crucial_lemma} we present some complications and our solutions which arise when translating the description of finite colimits given in this Section to the more general setting of an elementary topos admitting countably infinite colimits.
	
	\section{Type theory}\label{sec:type_theory}
	Type theories were originally suggested as a foundation for mathematics. In this paper, a type theory will be a convenient language of subobjects and morphisms inside a topos. We begin with the definition of a type theory and then specialise to those arising from topoi. In particular, Definition \ref{def:type_theory} will make no mention of any topoi.
	\begin{definition}
		\label{def:type_theory}
		A \textbf{type theory} consists of the following.
		\begin{itemize}
			\item A class of \textbf{types}, including special types $\Omega, \mathbbm{1}$. Also, for each type $\tau$, there is a countably infinite set of \textbf{variables} of type $\tau$.
			\item A class of \textbf{function symbols} $f: \tau \to \sigma$, where $f$ is a formal symbol, $\tau$ and $\sigma$ are types.
			\item A class of \textbf{relation symbols} $R \subseteq \tau$, where $R$ is a formal symbol, and $\tau$ is a type.
			\item A class of \textbf{terms}, where to each term $t$, there is an associated type, $\tau$. The notation $t : \tau$ means ``$t$ is of type $\tau$". Also, there is an associated set $\operatorname{FV}(t)$ of \textbf{free variables}.
			\item A class of \textbf{formulas}, where similarly to terms, to each formula there is an associated set of free variables, however unlike terms, there is no associated type.
			\item For every finite sequence $\Delta = (x_1:\tau_1,...,x_n:\tau_n)$ of variables, a binary relation $\vdash_\Delta$ of \textbf{entailment} between formulas whose free variables appear in $\Delta$. We write $p \vdash_\Delta q$ when the pair $(p,q)$ is in the relation $\vdash_\Delta$, and such an expression $p \vdash_\Delta q$ is called a \textbf{sequent}. The sequence $\Delta$ in a sequent $p \vdash_\Delta q$ is the \textbf{context} and a context is \textbf{valid} if the free variables of $p$ and $q$ are a subset of the underlying set of $\Delta$.
		\end{itemize}
		This data is required to be subject to the following.
		\begin{enumerate}
			\item If $\tau$ and $\sigma$ are types, then so are $\tau \times \sigma$ and $P\tau$. Identification between types is also allowed.\\\\
			The following axiom describes how the class of terms and formulas along with their associated type (for terms) and free variable sets are constructed. First a class of \textbf{preterms} will be defined by induction, then appropriate equivalence classes of preterms will constitute the terms, a similar process will be undertaken for formulas.
			\item The class of \textbf{preterms} satisfy the following.
			\begin{enumerate}
				\item There is the class of \textbf{atomic preterms}, containing all the variables, as well as the special term $\ast : \mathbbm{1}$. There is also assumed to be \textbf{atomic preformulas} which are $\top, \bot$.
				\item The preterms and preformulas are closed under the following \textbf{preterm and preformula formation rules},
				\begin{enumerate}
					\item If $t:\tau$ and $s:\sigma$ are preterms, then $\langle t,s\rangle : \tau \times \sigma$ is a preterm.
					\item If $t: \tau \times \sigma$ is a preterm, then $\operatorname{fst}(t) : \tau$ and $\operatorname{snd}(t) : \sigma$ are preterms.
					\item If $f:\tau \to \sigma$ is a function symbol, and $t:\tau$ is a preterm, then $ft:\sigma$ is a preterm.
					\item If $R \subseteq \tau$ is a relation symbol, and $t:\tau$ is a term, then $R(t)$ is a preformula.
					\item If $t,s:\tau$, then $t = s$ is a preformula.
					\item If $p$ and $q$ are preformulas, then $p \wedge q$, $p \vee q$, and $p \Rightarrow q$, are all preformulas.
					\item If $\lbrace p_i\rbrace_{i = 0}^\infty$ is a countable set of preformulas, then $\bigvee_{i = 0}^\infty p_i$ is a preformula.
					\item If $x : \tau$ is a variable, and $p$ is a preformula, then $\forall x : \tau, p$ and $\exists x : \tau, p$ are both preformulas.
					\item If $p$ is a preformula, and $x : \tau$ is a variable, then $\lbrace x : \tau \mid p\rbrace$ is a preterm of type $P\tau$.
					\item If $t : \tau$ and $T : P\tau$, then $t \in T$ is a preformula.
				\end{enumerate}
				\item The preterms and preformulas have free variable sets satisfying the following.
				\begin{enumerate}
					\item $\operatorname{FV}(\ast) = \operatorname{FV}(\bot) = \operatorname{FV}(\top) = \varnothing$.
					\item FV$(x:\tau) = \lbrace x:\tau \rbrace$, if $x:\tau$ is any variable.
					\item $\operatorname{FV}(\langle t,s\rangle) = \operatorname{FV}(t) \cup \operatorname{FV}(s)$.
					\item $\operatorname{FV}(\operatorname{fst}(t)) = \operatorname{FV}(\operatorname{snd}(t)) = \operatorname{FV}(t)$.
					\item $\operatorname{FV}(ft) = \operatorname{FV}(t)$.
					\item $\operatorname{FV}(R(t)) = \operatorname{FV}(t)$.
					\item $\operatorname{FV}(t = s) = \operatorname{FV}(t) \cup \operatorname{FV}(s)$.
					\item $\operatorname{FV}(t \in T) = \operatorname{FV}(t) \cup \operatorname{FV}(T)$.
					\item $\operatorname{FV}(p \wedge q) = \operatorname{FV}(p \vee q) = \operatorname{FV}(p \Rightarrow q) = \operatorname{FV}(p) \cup \operatorname{FV}(q)$.
					\item $\operatorname{FV}(\bigvee_{i = 0}^\infty p_i) = \bigcup_{i = 0}^\infty\operatorname{FV}(p_i)$
					\item $\operatorname{FV}(\lbrace x : \tau \mid p\rbrace) = \operatorname{FV}(\forall x : \tau,p) = \operatorname{FV}(\exists x : \tau,p) = \operatorname{FV}(p) \setminus \lbrace x \rbrace$.
				\end{enumerate}
				An \textbf{occurence} of a variable $x$ in a term will mean any which is not the $``x : \tau"$ part of any term of the form $\lbrace x : \tau\mid p\rbrace$, $\forall x : \tau, p$, or $\exists x : \tau, p$. Any occurence of a variable which is not part of a term's free variable set is a \textbf{bound variable}.
				\item The terms and formulas are $\alpha$-$\text{equivalence}$ classes of preterms and preformulas respectively, where \textbf{$\alpha$-equivalence} is the smallest relation on the collection of preterms or preformulas respectively subject to the following.
				\begin{enumerate}
					\item If $t =_\alpha s$, then $t$ and $s$ are of the same type,
					\item If $t =_\alpha t'$ and $s =_\alpha s'$, then $\langle t,s\rangle =_\alpha \langle t', s'\rangle$,
					\item If $t =_\alpha s$, then $\operatorname{fst}(t) =_\alpha \operatorname{fst}(t)$, and $\operatorname{snd}(t) =_\alpha \operatorname{snd}(s)$,
					\item If $t =_\alpha t'$, then $ft =_\alpha ft'$,
					\item If $t =_\alpha t'$, then $R(t) =_\alpha R(t')$,
					\item If $t =_\alpha t'$ and $s =_\alpha s'$, then $(t = s) =_\alpha (t' = s')$,
					\item If $p =_\alpha p'$ and $q =_\alpha q'$, then $p \wedge q =_\alpha p' \wedge q'$, $p \vee q =_\alpha p' \vee q'$, and $p \Rightarrow q =_\alpha p' \Rightarrow q'$,
					\item If for all $i$, $p_i =_\alpha p_i'$, then $\bigvee_{i = 0}^\infty p_i =_\alpha \bigvee_{i = 0}^\infty p_i'$,
					\item If $p =_\alpha p'$, then $\lbrace x : \tau\mid p\rbrace =_\alpha \lbrace x : \tau \mid p'\rbrace$, $\forall (x : \tau)p =_\alpha \forall (x : \tau)p'$, and $\exists (x : \tau)p =_\alpha \exists(x : \tau)p'$,
					\item If $t =_\alpha t'$ and $T =_\alpha T'$, then $t \in T =_\alpha t' \in T'$,
					\item $\lbrace x : \tau \mid p\rbrace =_\alpha \lbrace y : \tau \mid p[x:=y]\rbrace$,\\
					$\forall x : \tau,p =_\alpha \forall y : \tau,(p[x:= y])$,\\
					$\exists x : \tau,p =_\alpha \exists y : \tau,(p[x :=y])$, provided that no free occurrence of $x$ in $p$ is such that $y$ in place of $x$ would be bound. In the above, the notation $p[x:=y]$ means the term $p$ but with every occurrence of $x$ replaced by $y$.
				\end{enumerate}
				\item The free variable set of a term $[t]$ is $\text{FV}(t)$, where $t$ is any representative of $[t]$, and similarly for formulas. For convenience, equivalence class brackets will be dropped.
				\item There is also the following shorthand notation,
				\begin{enumerate}
					\item $\neg p$ means $p \Rightarrow \bot$,
					\item $p \Leftrightarrow q$ means $(p \Rightarrow q) \wedge (q \Rightarrow p)$,
					\item $\lbrace x \rbrace$ means $\lbrace x' : \tau \mid x = x'\rbrace$, where $x : \tau$,
					\item $\varnothing_{x:\tau}$ means $\lbrace x:\tau \mid \bot\rbrace$
				\end{enumerate}
			\end{enumerate}
			\item Finally, there are two sets of axioms concerning the entailment relation, these are as follows.
			\begin{enumerate}
				\item The \textbf{structural rules}:
				\begin{enumerate}
					\item\label{rule:ax} For any variable $p$ and valid context $\Delta$.
					\begin{prooftree}
						\AxiomC{$p \vdash_\Delta p$}
					\end{prooftree}
					\item\label{rule:cut} For any variables $p,q,r$ and valid context $\Delta$.
					\begin{prooftree}
						\AxiomC{$p \vdash_\Delta q$}
						\AxiomC{$q \vdash_\Delta r$}
						\BinaryInfC{$p \vdash_\Delta r$}
					\end{prooftree}
					\item\label{rule:sub} For any sequence of variables $(x_1:\tau_1,...,x_n:\tau_n)$, sequence of terms $(t_1:\tau_1,...,t_n:\tau_n)$, and context $\Sigma$ such that each variable which appears in $\Delta$, except for those in $(x_1,...,x_n)$, also appear in $\Sigma$,
					\begin{prooftree}
						\AxiomC{$p \vdash_{\Delta} q$}
						\UnaryInfC{$p[(x_1,...,x_n):= (t_1,...,t_n)] \vdash_{\Sigma} q[(x_1,...,x_n):= (t_1,...,t_n)]$}
					\end{prooftree} where $p[(x_1,...,x_n):= (t_1,...,t_n)]$ means the term given by $p$ after simultaneously substituting each $x_i$ for $t_i$ (similarly for $q$). It is assumed that no free variable in any $t_i$ becomes bound in $p[(x_1,...,x_n):= (t_1,...,t_n)]$ nor $q[(x_1,...,x_n):= (t_1,...,t_n)]$. This can always be achieved by renaming bound variables. \footnote{This final rule is the one to which contexts owe their existence. The essential point is that from $p \vdash_{\Delta,x:X} q$ (where $\Delta,x:\tau$ is the context given by appending $x:\tau$ to the end of $\Delta$), one can infer that $p[x := t] \vdash_\Delta q[x := t]$ only if there exists a term $t:X$ such that $\text{FV}(t) \subseteq \Delta$. Lambek and Scott point out \cite[\S II.1 p.131]{lambekscott} that to deduce $\forall x : X, p\vdash \exists x:X, p$ from $\forall x : X, p\vdash_{x:X} \exists x:X, p$ without there existing a closed term of type $X$ is undesirable from a logical point of view, as although ``for all unicorns $x$, $x$ has a horn", it is not the case that ``there exists a unicorn $x$, such that $x$ has a horn", because (presumably) there does not exist any unicorns at all!}\\\\
					Note: this axiom also allows introducing superfluous variables to the context $\Delta$, and also for rearrangement of elements.
				\end{enumerate}
				\item The \textbf{logical rules}, in the following, the notation $\Delta,x:\tau$ means the context given by $\Delta$ with the variable $x:\tau$ appended to the end,
				\begin{enumerate}
					\item True and False.\label{rule:true_false}
					\begin{center}
						\AxiomC{}
						\RightLabel{$(\top)$}
						\UnaryInfC{$p \vdash_\Delta \top$}
						\DisplayProof
						\qquad
						\AxiomC{}
						\RightLabel{$(\bot)$}
						\UnaryInfC{$\bot \vdash_{\Delta} p$}
						\DisplayProof
					\end{center}
					\item\label{rule:conjunction} Conjunction introduction and elimination.
					\begin{center}
						\AxiomC{$r \vdash_{\Delta} p$}
						\AxiomC{$r \vdash_{\Delta} q$}
						\RightLabel{$(\wedge I)$}
						\BinaryInfC{$r \vdash_{\Delta} p \wedge q$}
						\DisplayProof
						\qquad
						\AxiomC{$r \vdash_{\Delta} p \wedge q$}
						\RightLabel{$(\wedge E)_L$}
						\UnaryInfC{$r \vdash_{\Delta} p$}
						\DisplayProof
						\qquad
						\AxiomC{$r \vdash_{\Delta} p \wedge q$}
						\RightLabel{$(\wedge E)_R$}
						\UnaryInfC{$r \vdash_{\Delta} q$}
						\DisplayProof
					\end{center}
					\item\label{rule:disjunction} Disjunction introduction and elimination.
					\begin{center}
						\AxiomC{$p \vdash_{\Delta} r$}
						\AxiomC{$q \vdash_{\Delta} r$}
						\RightLabel{$(\vee I)$}
						\BinaryInfC{$p \vee q \vdash_{\Delta} r$}
						\DisplayProof
						\qquad
						\AxiomC{$p \vee q \vdash_{\Delta} r$}
						\RightLabel{$(\vee E)_L$}
						\UnaryInfC{$p \vdash_{\Delta} r$}
						\DisplayProof
						\qquad
						\AxiomC{$p \vee q \vdash_{\Delta} r$}
						\RightLabel{$(\vee E)_R$}
						\UnaryInfC{$q \vdash_{\Delta} r$}
						\DisplayProof
					\end{center}
					\item\label{rule:implication} Implication introduction and elimination.
					\begin{center}
						\AxiomC{$p \wedge q \vdash_{\Delta} r$}
						\RightLabel{$(\Rightarrow I)$}
						\UnaryInfC{$p \vdash_{\Delta} q \Rightarrow r$}
						\DisplayProof
						\qquad
						\AxiomC{$p \vdash_{\Delta} q \Rightarrow r$}
						\RightLabel{$(\Rightarrow E)$}
						\UnaryInfC{$p \wedge q \vdash_{\Delta} r$}
						\DisplayProof
					\end{center}
					\item\label{rule:universal} For all introduction and elimination. In what follows, $x:\tau$ is arbitrary.
					\begin{center}
						\AxiomC{$p \vdash_{\Delta,x:\tau} q$}
						\RightLabel{$(\forall I)$}
						\UnaryInfC{$p \vdash \forall x: \tau, q$}
						\DisplayProof
						\qquad
						\AxiomC{$p \vdash_{\Delta} \forall x : \tau, q$}
						\RightLabel{$(\forall E)$}
						\UnaryInfC{$p \vdash_{\Delta, x:\tau} q$}
						\DisplayProof
					\end{center}
					\item\label{rule:existential} Existential introduction and elimination. 
					\begin{center}
						\AxiomC{$p\vdash_{x:\tau, \Delta}q$}
						\RightLabel{$(\exists I)$}
						\UnaryInfC{$\exists x:\tau, p \vdash_{\Delta} q$}
						\DisplayProof
						\qquad
						\AxiomC{$\exists x: \tau, p\vdash_{\Delta} q$}
						\RightLabel{$(\exists E)$}
						\UnaryInfC{$p \vdash_{x:\tau, \Delta} q$}
						\DisplayProof
					\end{center}
					\item\label{rule:equality} Equality introduction. In what follows, $x:\tau$ is arbitrary.
					\begin{center}
						\AxiomC{}
						\RightLabel{$(=I)$}
						\UnaryInfC{$\top \vdash x = x$}
						\DisplayProof
						\end{center}
					As well as equality elimination:
					\begin{center}
						\AxiomC{}
						\RightLabel{$(=E)$}
						\UnaryInfC{$x_1 = y_1 \wedge \hdots \wedge x_n = y_n \wedge p \vdash_{\Delta} p[(x_1,...,x_n) := (y_1,...,y_n)]$}
						\DisplayProof
					\end{center}
				\end{enumerate}
				We write $t \vdash s$ for $t \vdash_\varnothing s$ and $\vdash_\Delta p$ for $\top \vdash_\Delta p$.
			\end{enumerate}
		\end{enumerate}
		The logical rules and structural rules together form the \textbf{deduction rules}.
	\end{definition}
	\begin{remark}
		Notice that Definition \ref{def:type_theory} admits no deduction rules associated to terms of the form $\lbrace x:\tau \mid p\rbrace$ nor to terms of the form $\bigvee_{i = 0}^\infty p_i$. Such rules do exist but are given as \emph{derived rules} later on (Proposition \ref{prop:derived_rules}) once the internal logic of a topos has been defined.
	\end{remark}
	
	\subsection{Some helpful type theory lemmas}\label{sec:helpful_lemmas}
	\label{helpfulLemmas}
	Many reasonable sounding statements concerning the entailment relation do in fact follow from the axioms and deduction rules, this section provides a collection of particularly helpful ones. Many of these will be used in Section \ref{sec:arbitrary_topos}. In what follows, $\Delta,x:\tau$ will always mean the context given by appending $x:\tau$ to the end of the sequence $\Delta$, we give full proofs to illustrate the basic methods involved in working with type theories.
	\begin{lemma}
		\label{weakening}
		Let $p,q,r$ be arbtirary formulas and $x:\tau$ an arbitrary variable.
		\begin{enumerate}
			\item $p \wedge q \vdash_\Delta r$ if and only if $q \wedge p \vdash_\Delta r$
			\item $p \vdash_\Delta \neg(\neg p)$,
			\item if $p \vdash_\Delta q$, then $p \wedge r \vdash_\Delta q$,
			\item $\neg q \vdash_\Delta \neg(q \wedge p)$,
			\item $p \wedge (q \vee r) \vdash_\Delta (p \wedge q) \vee (p \wedge r)$,
			\item $(p \vee q) \wedge \neg q \vdash_\Delta p$, and
			\item $(\exists x:\tau, p) \wedge q \vdash_{\Delta}\exists x:\tau, p \wedge q$ and $\exists x:\tau, p \wedge q \vdash_{\Delta} (\exists x:\tau, p) \wedge q$
		\end{enumerate}
	\end{lemma}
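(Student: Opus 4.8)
The plan is to derive all seven items from the structural rules (ax) and (cut) together with the logical rules, proving them in the order listed so that later parts may invoke earlier ones. Before starting I would record two building blocks obtained by feeding an axiom sequent into an elimination rule. Applying $(\vee E)_L$ and $(\vee E)_R$ to the axiom $p \vee q \vdash_\Delta p \vee q$ yields the ``standard'' disjunction introductions $p \vdash_\Delta p \vee q$ and $q \vdash_\Delta p \vee q$; applying $(\exists E)$ to the axiom $\exists x:\tau, p \vdash_\Delta \exists x:\tau, p$ yields the existential introduction $p \vdash_{x:\tau,\Delta} \exists x:\tau, p$. These are precisely the ``missing halves'' of the introduction rules and will be used repeatedly.

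For item (1) I would first show $q \wedge p \vdash_\Delta p \wedge q$ by extracting $p$ and $q$ from the axiom via $(\wedge E)_R$ and $(\wedge E)_L$ and recombining with $(\wedge I)$; the biconditional then follows by (cut) in each direction. Item (3) is immediate: $p \wedge r \vdash_\Delta p$ by $(\wedge E)_L$, then (cut) with the hypothesis $p \vdash_\Delta q$. Item (2) unfolds $\neg\neg p$ as $(p \Rightarrow \bot) \Rightarrow \bot$ and proceeds by $(\Rightarrow I)$, reducing the goal to $p \wedge (p \Rightarrow \bot) \vdash_\Delta \bot$, which follows from $(\Rightarrow E)$ applied to the axiom together with item (1). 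Item (4) is similar: after $(\Rightarrow I)$ the goal becomes $\neg q \wedge (q \wedge p) \vdash_\Delta \bot$, which reduces by $(\wedge E)$ and $(\wedge I)$ to $\neg q \wedge q \vdash_\Delta \bot$, an instance of $(\Rightarrow E)$.

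The conceptual core is item (5), distributivity, and I expect it to be the main obstacle, because the disjunction sits to the right of the turnstile where $(\vee E)$ cannot act. The trick is to detour through implication: writing $s := (p \wedge q) \vee (p \wedge r)$, I would prove $q \vdash_\Delta p \Rightarrow s$ and $r \vdash_\Delta p \Rightarrow s$ (each via the standard disjunction introduction for $s$, item (1), and $(\Rightarrow I)$), combine them with $(\vee I)$ to get $q \vee r \vdash_\Delta p \Rightarrow s$, apply $(\Rightarrow E)$ to obtain $(q \vee r) \wedge p \vdash_\Delta s$, and finish with item (1). Item (6) then falls out of (5): by item (1) it suffices to treat $\neg q \wedge (p \vee q)$, distribute via (5) to $(\neg q \wedge p) \vee (\neg q \wedge q)$, and dispatch the two disjuncts with $(\vee I)$, the first by $(\wedge E)_R$ and the second by $(\Rightarrow E)$ followed by the $(\bot)$ rule.

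Item (7) is the other delicate point, and here I would flag the implicit hypothesis $x \notin \operatorname{FV}(q)$ needed for the quantifier rules to apply with valid contexts. The direction $\exists x:\tau,(p \wedge q) \vdash_\Delta (\exists x:\tau, p) \wedge q$ is the easier one: by $(\exists I)$ reduce to $p \wedge q \vdash_{x:\tau,\Delta} (\exists x:\tau, p) \wedge q$, then use $(\wedge I)$ together with $(\wedge E)_L$, the existential introduction building block, and $(\wedge E)_R$. The Frobenius direction $(\exists x:\tau, p) \wedge q \vdash_\Delta \exists x:\tau,(p \wedge q)$ again uses the implication detour: take $p \wedge q \vdash_{x:\tau,\Delta} \exists x:\tau,(p \wedge q)$ directly from the existential introduction building block, push $q$ across with $(\Rightarrow I)$, close the quantifier with $(\exists I)$ to get $\exists x:\tau, p \vdash_\Delta q \Rightarrow \exists x:\tau,(p \wedge q)$, and recover the claim with $(\Rightarrow E)$. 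The only real care throughout is the bookkeeping of contexts under $(\exists I)$ and $(\exists E)$ and ensuring the freshness of $x$.
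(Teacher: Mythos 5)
Your proposal is correct and takes essentially the same route as the paper's own proof: the same commutativity-plus-cut arguments for (1)--(4), the same implication detour (proving $q \vdash_\Delta p \Rightarrow s$ and $r \vdash_\Delta p \Rightarrow s$ and combining with $(\vee I)$) for distributivity in (5), the same dispatch of the two disjuncts for (6), and the identical chain $(\exists E)$, $(\Rightarrow I)$, $(\exists I)$, $(\Rightarrow E)$ for the Frobenius direction of (7). The only differences are local refinements in your favour: your proof of (3) is shorter than the paper's implication detour, you prove the second half of (7) explicitly where the paper merely remarks that its proof tree can be read from bottom to top, and you rightly flag the freshness condition $x \notin \operatorname{FV}(q)$ that the paper leaves implicit.
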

	\begin{proof}
		\begin{enumerate}
			\item There is the following proof tree.
			\begin{prooftree}
				\AxiomC{$q \wedge p \vdash_\Delta q \wedge p$}
				\RightLabel{$\eqref{rule:conjunction}$}
				\UnaryInfC{$q \wedge p \vdash_\Delta p$}
				\AxiomC{$q \wedge p \vdash_\Delta q \wedge p$}
				\RightLabel{$\eqref{rule:conjunction}$}
				\UnaryInfC{$q \wedge p \vdash_\Delta q$}
				\RightLabel{$\eqref{rule:conjunction}$}
				\BinaryInfC{$q \wedge p \vdash_\Delta p \wedge q$}
				\AxiomC{$\vdots$}
				\noLine
				\UnaryInfC{$p \wedge q \vdash_\Delta r$}
				\RightLabel{$\eqref{rule:cut}$}
				\BinaryInfC{$q \wedge p \vdash_\Delta r$}
			\end{prooftree}
			\item Let $\pi$ denote the following proof tree.
			\begin{scprooftree}{0.9}
				\AxiomC{$p \wedge (p \Rightarrow \bot) \vdash_\Delta p \wedge (p \Rightarrow \bot)$}
				\RightLabel{$\eqref{rule:conjunction}$}
				\UnaryInfC{$p \wedge (p \Rightarrow \bot) \vdash_\Delta p \Rightarrow \bot$}
				\AxiomC{$p \wedge (p \Rightarrow \bot) \vdash_\Delta p \wedge (p \Rightarrow \bot)$}
				\RightLabel{$\eqref{rule:conjunction}$}
				\UnaryInfC{$p \wedge (p \Rightarrow \bot) \vdash_\Delta p$}
				\RightLabel{$\eqref{rule:conjunction}$}
				\BinaryInfC{$p \wedge (p \Rightarrow \bot) \vdash_\Delta (p \Rightarrow \bot) \wedge p$}
			\end{scprooftree}
			Then, there is the following proof tree.
			\begin{prooftree}
				\AxiomC{$\pi$}
				\noLine
				\UnaryInfC{$\vdots$}
				\noLine
				\UnaryInfC{$p \wedge (p \Rightarrow \bot) \vdash_\Delta (p \Rightarrow \bot) \wedge p$}
				\AxiomC{$p \Rightarrow \bot \vdash_\Delta p \Rightarrow \bot$}
				\RightLabel{$\eqref{rule:implication}$}
				\UnaryInfC{$(p \Rightarrow \bot) \wedge p \vdash_\Delta \bot$}
				\RightLabel{$\eqref{rule:cut}$}
				\BinaryInfC{$p \wedge (p \Rightarrow \bot) \vdash_\Delta \bot$}
				\RightLabel{$\eqref{rule:implication}$}
				\UnaryInfC{$p \vdash_\Delta(p \Rightarrow \bot) \Rightarrow \bot$}
			\end{prooftree}
			\item Observe the following proof tree.
			\begin{prooftree}
				\AxiomC{$\vdots$}
				\noLine
				\UnaryInfC{$p \vdash_\Delta q$}
				\AxiomC{$q \wedge r \vdash_\Delta q \wedge r$}
				\RightLabel{$\eqref{rule:implication}$}
				\UnaryInfC{$q \vdash_\Delta r \Rightarrow (q \wedge r)$}
				\RightLabel{$\eqref{rule:cut}$}
				\BinaryInfC{$p \vdash_\Delta r \Rightarrow (q \wedge r)$}
				\RightLabel{$\eqref{rule:implication}$}
				\UnaryInfC{$p \wedge r \vdash_\Delta q \wedge r$}
				\RightLabel{$\eqref{rule:conjunction}$}
				\UnaryInfC{$p \wedge r \vdash_\Delta q$}
			\end{prooftree}
			\item By axiom $\eqref{rule:implication}$, it suffices to show $(q \Rightarrow \bot) \wedge (q \wedge p) \vdash_\Delta \bot$, for which it suffices to show $((q \Rightarrow \bot) \wedge q) \wedge p) \vdash_\Delta \bot$. By part 2 of this Lemma, it suffices to show $(q \Rightarrow \bot) \wedge q \vdash \bot$, which follows from axiom $\eqref{rule:implication}$, as $q \Rightarrow \bot \vdash_\Delta q \Rightarrow \bot$.
			\item By $\eqref{rule:implication}$, it suffices to show $q \vee r \vdash_\Delta p \Rightarrow ((p \wedge q) \vee (p \wedge r))$, for which by $\eqref{rule:disjunction}$, it suffices to show both $q \vdash_\Delta p \Rightarrow ((p \wedge q) \vee (p \wedge r))$, and $r \vdash_\Delta p \Rightarrow ((p \wedge q) \vee (p \wedge r))$. The first sequent can be proved by the following, 
			where the label $(1)$ is referring to the first part of this Lemma.
			\begin{prooftree}
				\AxiomC{$(p \wedge q) \vee (p \wedge r) \vdash_\Delta (p \wedge q) \vee (p \wedge q)$}
				\RightLabel{$\eqref{rule:disjunction}$}
				\UnaryInfC{$p \wedge q \vdash_\Delta (p \wedge q) \vee (p \wedge q)$}
				\RightLabel{$(1)$}
				\UnaryInfC{$q \wedge p \vdash_\Delta (p \wedge q) \vee (p \wedge q)$}
				\RightLabel{$\eqref{rule:implication}$}
				\UnaryInfC{$q \vdash_\Delta p \Rightarrow ((p \wedge q) \vee (p \wedge r))$}
			\end{prooftree}
			There is a similar proof tree for the remaining sequent.
			\item From part 4 of this Lemma, it suffices to show $(p \wedge \neg q) \vee (\neg q \wedge q) \vdash_\Delta p$, for which there is the following proof tree.
			\begin{scprooftree}{0.7}
				\AxiomC{$p \wedge (q \Rightarrow \bot) \vdash_{\Delta} p \wedge (q \Rightarrow \bot)$}
				\RightLabel{$\eqref{rule:conjunction}$}
				\UnaryInfC{$p \wedge (q \Rightarrow \bot) \vdash_{\Delta} p$}
				\AxiomC{$(q \Rightarrow \bot) \vdash_\Delta (q \Rightarrow \bot)$}
				\RightLabel{$\eqref{rule:implication}$}
				\UnaryInfC{$(q \Rightarrow \bot) \wedge q \vdash_\Delta \bot$}
				\AxiomC{}
				\RightLabel{$\eqref{rule:true_false}$}
				\UnaryInfC{$\bot \vdash_\Delta p$}
				\RightLabel{$\eqref{rule:cut}$}
				\BinaryInfC{$(q \Rightarrow \bot) \wedge q \vdash_\Delta p$}
				\RightLabel{$\eqref{rule:disjunction}$}
				\BinaryInfC{$(p \wedge (q \Rightarrow \bot)) \vee ((q \Rightarrow \bot) \wedge q) \vdash_\Delta p$}
			\end{scprooftree}
			\item Observe the following proof tree.
			\begin{prooftree}
				\AxiomC{$\exists x:\tau, p \wedge q \vdash_\Delta \exists x:\tau, p \wedge q$}
				\RightLabel{$\eqref{rule:existential}$}
				\UnaryInfC{$p \wedge q \vdash_{\Delta,x:\tau}\exists x:\tau, p \wedge q$}
				\RightLabel{$\eqref{rule:implication}$}
				\UnaryInfC{$p \vdash_{\Delta, x:\tau} q \Rightarrow (\exists x:\tau, p \wedge q)$}
				\RightLabel{$\eqref{rule:existential}$}
				\UnaryInfC{$\exists x:\tau, p \vdash_\Delta q \Rightarrow (\exists x : \tau, p \wedge q)$}
				\RightLabel{$\eqref{rule:implication}$}
				\UnaryInfC{$(\exists x: \tau, p) \wedge q \vdash_\Delta \exists x:\tau, p \wedge q$}
			\end{prooftree}
		\end{enumerate}
		Reading this same proof tree from bottom to top gives a proof tree for the second sequent.
	\end{proof}
	\begin{lemma}
		\label{lem:variablesub}
		If $x:\tau$ is a variable, and $t:\tau$ is a term, then
		\[x = t \vdash_{\Delta,x:\tau}s \qquad \text{if and only if}\qquad \vdash_{\Delta}s[x := t]\]
	\end{lemma}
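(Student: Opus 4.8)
The plan is to prove the two directions separately, using the structural substitution rule together with the equality rules $(=I)$ and $(=E)$ and the cut rule; throughout I assume, as the validity of the two contexts forces, that $x$ does not occur in $\Delta$ and that $\operatorname{FV}(t)$ is contained in the variables of $\Delta$ (so in particular $x \notin \operatorname{FV}(t)$, which makes the substitutions below compose cleanly).

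For the forward direction, suppose $x = t \vdash_{\Delta, x:\tau} s$. I would apply the structural substitution rule to this sequent, substituting $x := t$ and passing to the context $\Delta$. On the left the premise $x = t$ becomes $(x=t)[x:=t]$, which equals $t = t$ since $x \notin \operatorname{FV}(t)$, while on the right $s$ becomes $s[x:=t]$; this yields $t = t \vdash_\Delta s[x:=t]$. Separately, $(=I)$ gives $\top \vdash x = x$, and substituting $x := t$ produces $\top \vdash_\Delta t = t$. A single cut of these two sequents gives $\top \vdash_\Delta s[x:=t]$, that is, $\vdash_\Delta s[x:=t]$.

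For the converse, suppose $\vdash_\Delta s[x:=t]$. First I weaken the context to $\Delta, x:\tau$, and using the rule $p \vdash_\Delta \top$ followed by a cut I upgrade this to $x = t \vdash_{\Delta, x:\tau} s[x:=t]$. The heart of the argument is then a ``reverse substitution'' sequent $t = x \wedge s[x:=t] \vdash_{\Delta, x:\tau} s$. To obtain it I choose a fresh variable $w:\tau$ and apply $(=E)$ with the single equation $w = x$ and the formula $s[x:=w]$, giving $w = x \wedge s[x:=w] \vdash_{\Delta,x,w} s$ because $s[x:=w][w:=x] = s$; then applying the structural substitution rule to replace $w$ by $t$, landing in the context $\Delta, x:\tau$, turns this into $t = x \wedge s[x:=t] \vdash_{\Delta,x:\tau} s$, where freshness of $w$ is used to evaluate the substituted formulas. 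Finally I combine $x = t \vdash_{\Delta,x:\tau} t = x$ (symmetry of equality, itself derivable from $(=I)$ and $(=E)$ by a fresh-variable argument) with the upgraded hypothesis via $(\wedge I)$, and cut against the reverse-substitution sequent to conclude $x = t \vdash_{\Delta, x:\tau} s$.

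The main obstacle is this converse direction. The rule $(=E)$ only ever rewrites a variable into a term from left to right, so the backward rewrite from $s[x:=t]$ to $s$ cannot be produced by a direct appeal to $(=E)$; the fresh-variable detour (rename $x$ to $w$, rewrite $w$ into $x$, then specialise $w$ to $t$) is exactly what circumvents this, at the cost of having to track contexts carefully through the structural substitution rule and to check the freshness side conditions so that the iterated substitutions compose as claimed. The symmetry step and keeping straight which variables survive each substitution are the points where care is most needed.
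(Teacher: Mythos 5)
Your proof is correct and follows essentially the same route as the paper's: the forward direction is the identical substitution-plus-cut argument, and the backward direction uses the same fresh-variable detour through $(=E)$ followed by the structural substitution $w := t$ (the paper's $x'$ plays the role of your $w$). The only difference is plumbing: where the paper commutes the conjunction and passes through $(\Rightarrow I)$, cut, and $(\Rightarrow E)$, you pass through an explicit symmetry sequent, $(\wedge I)$, and cut --- and in fact your orientation of the equation in the $(=E)$ step is the strictly correct one under the rule as stated, which is precisely why the symmetry lemma becomes necessary.
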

	\begin{proof} The ``only if" direction follows from the following proof tree.
		\begin{prooftree}
			\AxiomC{}
			\RightLabel{$\eqref{rule:equality}$}
			\UnaryInfC{$\vdash_{\Delta}t = t$}
			\AxiomC{$\vdots$}
			\noLine
			\UnaryInfC{$x = t \vdash_{\Delta,x:\tau}s$}
			\RightLabel{$\eqref{rule:sub}$}
			\UnaryInfC{$t = t \vdash_{\Delta}s[x := t]$}
			\RightLabel{$\eqref{rule:cut}$}
			\BinaryInfC{$\vdash_{\Delta}s[x := t]$}
		\end{prooftree}
		For the other direction, let $x':\tau$ be such that $x' \not\in\text{FV}(s)$, then there is the following proof tree.
		\begin{prooftree}
			\AxiomC{$\vdots$}
			\noLine
			\UnaryInfC{$\vdash_{x,\Delta}s[x := t]$}
			\AxiomC{}
			\RightLabel{$\eqref{rule:equality}$}
			\UnaryInfC{$x = x' \wedge s[x := x']\vdash_{\Delta,x:\tau,x':\tau}s$}
			\RightLabel{$\eqref{rule:sub}$}
			\UnaryInfC{$x = t \wedge s[x := t]\vdash_{\Delta,x:\tau}s$}
			\RightLabel{$\eqref{weakening}$}
			\UnaryInfC{$s[x := t] \wedge x = t \vdash_{\Delta,x:\tau}s$}
			\RightLabel{$\eqref{rule:implication}$}
			\UnaryInfC{$s[x := t]\vdash_{\Delta,x:\tau}(x = t)\Rightarrow s$}
			\RightLabel{$\eqref{rule:cut}$}
			\BinaryInfC{$\vdash_{\Delta,x:\tau}(x = t)\Rightarrow s[x := t]$}
			\RightLabel{$\eqref{rule:implication}$}
			\UnaryInfC{$x = t\vdash_{\Delta,x:\tau}s[x:=t]$}
		\end{prooftree}
	\end{proof}
	\begin{lemma}
		\label{dansaside}
		If $p \vdash_{\Delta, x:\tau} q$, then $\exists x : \tau,p \vdash_\Delta \exists x : \tau, q$.
	\end{lemma}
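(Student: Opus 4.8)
The plan is to assemble the entailment from three ingredients supplied by the deduction rules: the existential elimination rule $(\exists E)$, the cut rule $(\ref{rule:cut})$, and finally the existential introduction rule $(\exists I)$. There is no conceptual obstacle here; the only genuine subtlety is bookkeeping with contexts, since the existential rules as stated attach the bound variable to the \emph{front} of the context (as in $x:\tau,\Delta$) whereas the statement carries it at the \emph{end} (as in $\Delta,x:\tau$). These two presentations are interchanged freely using the rearrangement of context elements permitted by the substitution rule $(\ref{rule:sub})$, as noted immediately after that rule.

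First I would establish the ``right introduction'' sequent $q \vdash_{\Delta,x:\tau} \exists x:\tau,\, q$. Starting from the axiom $\exists x:\tau,\, q \vdash_\Delta \exists x:\tau,\, q$, which is a valid sequent in context $\Delta$ because $\operatorname{FV}(\exists x:\tau,\,q) = \operatorname{FV}(q)\setminus\{x\} \subseteq \Delta$ by validity of the context of the hypothesis, an application of $(\exists E)$ yields $q \vdash_{x:\tau,\Delta} \exists x:\tau,\, q$. Rearranging the context then gives $q \vdash_{\Delta,x:\tau} \exists x:\tau,\, q$. Observe that $x$ is bound on the right-hand side, so no variable clash occurs.

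Next I would cut this against the hypothesis $p \vdash_{\Delta,x:\tau} q$ via $(\ref{rule:cut})$, obtaining $p \vdash_{\Delta,x:\tau} \exists x:\tau,\, q$. Finally, rearranging the context to $p \vdash_{x:\tau,\Delta} \exists x:\tau,\, q$ and applying $(\exists I)$ — which is legitimate precisely because $x$ does not occur free in $\exists x:\tau,\, q$ — produces the desired sequent $\exists x:\tau,\, p \vdash_\Delta \exists x:\tau,\, q$.

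The only thing to watch, rather than any real difficulty, is that every intermediate sequent sits in a valid context and that the bound variable $x$ never accidentally becomes free on the right-hand side. Both are automatic here, since $x$ remains bound inside $\exists x:\tau,\, q$ at every stage, so I expect the proof to be a short and routine chain of rule applications with context rearrangements.
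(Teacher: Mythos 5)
Your proof is correct, and it is a genuine streamlining of the paper's argument rather than a copy of it. Both proofs share the same skeleton at the start and the end: derive $q \vdash_{\Delta,x:\tau} \exists x:\tau,\, q$ from the axiom \eqref{rule:ax} by existential elimination \eqref{rule:existential} (modulo the context rearrangement permitted after rule \eqref{rule:sub}), reach the intermediate sequent $p \vdash_{\Delta,x:\tau} \exists x:\tau,\, q$, and close with existential introduction. The difference is how that intermediate sequent is reached. You cut the hypothesis $p \vdash_{\Delta,x:\tau} q$ directly against $q \vdash_{\Delta,x:\tau} \exists x:\tau,\, q$, which is all that is needed. The paper instead makes a detour through the universal quantifier: it applies $(\forall I)$ to the hypothesis to obtain $p \vdash_{\Delta} \forall x:\tau,\, q$, re-introduces $x:\tau$ into the context via \eqref{rule:sub}, separately proves the auxiliary sequent $\forall x:\tau,\, q \vdash_{\Delta,x:\tau} \exists x:\tau,\, q$ (itself a cut of a $(\forall E)$ instance against the same $(\exists E)$ instance you use), and then cuts. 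Your route uses one cut and no $\forall$ rules where the paper uses two cuts, both $\forall$ rules, and an extra context-weakening step; the universal-quantifier detour buys nothing here, since the sequent $q \vdash_{\Delta,x:\tau} \exists x:\tau,\, q$ that the paper embeds inside its auxiliary tree $\pi$ is already strong enough to cut against the hypothesis directly. Your attention to context validity (that $\operatorname{FV}(\exists x:\tau,\, q) \subseteq \Delta$ follows from validity of the hypothesis's context, and that $x$ stays bound on the right throughout) is exactly the bookkeeping required, so the proof stands as written.
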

	\begin{proof}
		First, let $\pi$ denote the following proof tree.
		\begin{prooftree}
			\AxiomC{$\forall x : \tau, q \vdash_\Delta \forall x : \tau, q$}
			\RightLabel{$\eqref{rule:universal}$}
			\UnaryInfC{$\forall x : \tau, q \vdash_{\Delta,x:\tau} q$}
			\AxiomC{$\exists x : \tau, q \vdash_\Delta \exists x : \tau, q$}
			\RightLabel{$\eqref{rule:existential}$}
			\UnaryInfC{$q \vdash_{\Delta, x:\tau} \exists x : \tau, q$}
			\RightLabel{$\eqref{rule:cut}$}
			\BinaryInfC{$\forall x : \tau, q \vdash_{\Delta,x:\tau}\exists x : \tau, q$}
		\end{prooftree}
		Then there is the following.
		\begin{prooftree}
			\AxiomC{$\vdots$}
			\noLine
			\UnaryInfC{$p \vdash_{\Delta,x:\tau} q$}
			\RightLabel{$\eqref{rule:universal}$}
			\UnaryInfC{$p \vdash_\Delta \forall x : \tau, q$}
			\RightLabel{$\eqref{rule:sub}$}
			\UnaryInfC{$p \vdash_{\Delta, x:\tau} \forall x : \tau, q$}
			\AxiomC{$\pi$}
			\noLine
			\UnaryInfC{$\vdots$}
			\noLine
			\UnaryInfC{$\forall x : \tau, q \vdash_{\Delta, x:\tau}\exists x : \tau, q$}
			\RightLabel{$\eqref{rule:cut}$}
			\BinaryInfC{$p \vdash_{\Delta, x:\tau} \exists x : \tau, q$}
			\RightLabel{$\eqref{rule:existential}$}
			\UnaryInfC{$\exists x : \tau, p \vdash_\Delta \exists x : \tau, q$}
		\end{prooftree}
	\end{proof}
	\begin{lemma}
		\label{lem:witness}
		Let $t:\tau$ be a term and $p$ a formula. Then
		\[p[x := t] \vdash_{\Delta} \exists x : \tau, p\]
	\end{lemma}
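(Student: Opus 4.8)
The plan is to obtain this as a two-line consequence of existential elimination followed by substitution, with no need for any of the helper lemmas. The starting point is the axiom instance $\exists x:\tau, p \vdash_\Delta \exists x:\tau, p$ provided by \eqref{rule:ax}. Applying existential elimination \eqref{rule:existential} moves the quantified variable into the context, yielding $p \vdash_{x:\tau,\Delta} \exists x:\tau, p$. At this stage the variable $x$ occurs freely on the left, inside $p$, but only as a bound variable on the right.

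The second and final step is to apply the substitution rule \eqref{rule:sub} to this sequent, substituting the single variable $x$ by the term $t$ and taking the new context to be $\Delta$. I would first check the hypotheses of \eqref{rule:sub}: the old context is $(x:\tau,\Delta)$, and every variable of it other than $x$ already appears in $\Delta$, exactly as required; moreover $\operatorname{FV}(t) \subseteq \Delta$, which holds implicitly, since otherwise the desired conclusion $p[x:=t]\vdash_\Delta \exists x:\tau,p$ would not even lie in a valid context. The substitution then replaces the free occurrences of $x$ in $p$ by $t$, producing $p[x:=t]$ on the left, while on the right the only occurrence of $x$ is the bound one in $\exists x:\tau,p$, so that formula is left unchanged. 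This delivers precisely $p[x:=t]\vdash_\Delta \exists x:\tau,p$.

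The only point requiring care, and the nearest thing to an obstacle, is the side condition attached to \eqref{rule:sub}, namely that no free variable of $t$ may become bound after the substitution into $p$. As the definition of that rule records, this can always be arranged by first $\alpha$-renaming the bound variables of $p$, so it poses no genuine difficulty. I would simply note that the substitution is performed on a suitable representative of the $\alpha$-equivalence class, and that, by the same token, the apparent clash between the context variable $x:\tau$ and the bound $x$ occurring in $\exists x:\tau,p$ is harmless, since substitution acts only on free occurrences.
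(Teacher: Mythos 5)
Your proposal is correct and is essentially identical to the paper's own proof: both start from the axiom $\exists x:\tau, p \vdash_\Delta \exists x:\tau, p$, apply existential elimination \eqref{rule:existential} to move $x:\tau$ into the context, and then apply the substitution rule \eqref{rule:sub} with $x := t$, noting that the bound occurrence of $x$ on the right is untouched. Your extra remarks on the side conditions of \eqref{rule:sub} (context compatibility, $\alpha$-renaming to avoid capture) are sound and merely make explicit what the paper leaves implicit.
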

	The term $t$ can be thought of as a witness of the statement $p$, so this Lemma states that to entail $\exists x : \tau, p$, it suffices to bear a witness $t$.
	\begin{proof} Observe the following proof tree,
		\begin{prooftree}
			\AxiomC{$\exists x : \tau, p \vdash_{\Delta}\exists x : \tau, p$}
			\RightLabel{$\eqref{rule:existential}$}
			\UnaryInfC{$p \vdash_{\Delta, x:\tau}\exists x : \tau, p$}
			\RightLabel{$\eqref{rule:sub}$}
			\UnaryInfC{$p[x := t]\vdash_{\Delta} \exists x:\tau, p$}
		\end{prooftree}
	\end{proof}
	
	\begin{lemma}\label{lem:disjunction_right}
		If $p \vdash q$ then $p \vdash q \vee r$.
	\end{lemma}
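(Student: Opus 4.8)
The plan is to reduce the statement to the hypothesis via a single application of the cut rule \eqref{rule:cut}, once the auxiliary sequent $q \vdash q \vee r$ has been established. In other words, I would first show that $q$ entails the disjunction $q \vee r$ (the ``right injection'' into the join), and then paste this together with the given sequent $p \vdash q$ to conclude $p \vdash q \vee r$. Since the lemma is stated in the empty context, every sequent in the argument lives in the context $\varnothing$, so no bookkeeping with \eqref{rule:sub} is needed.

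The key step is producing $q \vdash q \vee r$. The natural move is to start from the reflexivity sequent $q \vee r \vdash q \vee r$, available immediately from the axiom rule \eqref{rule:ax}, and then apply the left disjunction elimination rule $(\vee E)_L$ of \eqref{rule:disjunction}, which takes a sequent $q \vee r \vdash_\Delta s$ to $q \vdash_\Delta s$. With $s := q \vee r$ this yields exactly $q \vdash q \vee r$. The resulting derivation is
\begin{prooftree}
\AxiomC{$q \vee r \vdash q \vee r$}
\RightLabel{$\eqref{rule:disjunction}$}
\UnaryInfC{$q \vdash q \vee r$}
\AxiomC{$\vdots$}
\noLine
\UnaryInfC{$p \vdash q$}
\RightLabel{$\eqref{rule:cut}$}
\BinaryInfC{$p \vdash q \vee r$}
\end{prooftree}

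The main (and only) conceptual obstacle is orientational rather than technical: in this presentation the disjunction rules are packaged so that $(\vee I)$ expresses the co-universal property of the join, while it is the \emph{elimination} rules $(\vee E)_L, (\vee E)_R$ that deliver the injection sequents $q \vdash q \vee r$ and $r \vdash q \vee r$. Recognising that one should feed the reflexivity axiom on $q \vee r$ into $(\vee E)_L$ — rather than attempting to use $(\vee I)$, which would require hypotheses we do not have — is the whole trick. Everything else is a routine application of \eqref{rule:cut}, so I do not anticipate any further difficulty.
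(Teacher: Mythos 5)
Your proposal is correct and is essentially identical to the paper's own proof: both derive $q \vdash q \vee r$ by applying $(\vee E)_L$ of \eqref{rule:disjunction} to the axiom $q \vee r \vdash q \vee r$, then conclude via \eqref{rule:cut} with $p \vdash q$. The only difference is the cosmetic left/right ordering of the two branches feeding the cut, and your remark about the paper's inverted packaging of the disjunction rules correctly identifies the one non-routine observation.
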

	\begin{proof}
		Observe the following proof tree.
		\begin{prooftree}
			\AxiomC{$p \vdash q$}
			\AxiomC{$q \vee r \vdash q \vee r$}
			\RightLabel{\eqref{rule:disjunction}}
			\UnaryInfC{$q \vdash q \vee r$}
			\RightLabel{\eqref{rule:cut}}
			\BinaryInfC{$p \vdash q \vee r$}
		\end{prooftree}
	\end{proof}
	
	\subsection{Internal logic}
	\label{MitchellBenabou}
	%
	%
	%     -- internal logic Definition --
	%
	%
	%
	%
	As already mentioned, if $\call{E}$ is an elementary topos, then there is an associated type theory in the sense of Definition \ref{def:type_theory}, called the \emph{pure internal logic of $\call{E}$}. We now define this.
	\begin{definition}
		\label{def:internal_logic_pure}
		Let $\call{E}$ be a topos which admits countably infinite colimits. Choose a colimit for every cocone, and a limit for every finite cone. Also choose an initial object $\mathbbm{1}$, a subobject classifer $\Omega$ along with a family of exponentials for $\Omega$. The \textbf{pure internal logic of $\call{E}$} (or \textbf{pure internal language}) is a type theory in the sense of Definition \ref{def:type_theory}, which for every object $A \in \call{E}$ admits a type with the same name. If $C \in \call{E}$ is the chosen product of objects $A$ and $B$, then identify the type $C$ with the product type $A \times B$. Similarly, if $C$ is the chosen exponent $\Omega^A$ for some obejct $A \in \call{E}$, then identify the type $PA$ with the type $C$. The function symbols are all $f:A \to B$, where $f$ is a morphism in $\call{E}$, the relation symbols are all $r$ where $r: R \rightarrowtail A$ is a subobject of $A$ in $\call{E}$, and terms and formulas consisting only of those which can be constructed from the formation rules as given in Definition \ref{def:type_theory}.
	\end{definition}

\begin{remark}
	It is important in Definition \ref{def:internal_logic_pure} that the relation symbols are subobjects and \emph{not} chosen representatives of subobjects. The reason is due to the Substitution Lemma (\ref{lem:sub_prop} below). We show an example of what can go wrong if particular representatives are taken. Let $r$ be a formula with free variable $x$ of type $X$, and $t_1:X,t_2:Y$ terms, for simplicity we assume that $t_1$ has a single free variable $y:Y$ and $t_2$ is closed. We assume also that $y:Y$ does not appear inside $r$. Since the terms $(r[x:= t_1])[y := t_2]$ and $r[x := t_1[y := t_2]]$ are equal to each other, in order for the substitution Lemma to be well defined, we need the following to hold.
	\begin{equation}\label{eq:homotopy}
	\llbracket . (r[x:= t_1])[y := t_2] \rrbracket = \llbracket .  r[x := t_1[y := t_2]]\rrbracket
	\end{equation}
Now, $\llbracket. (r[x:= t_1])[y := t_2] \rrbracket$ is given by the following pair of pullback squares.
\begin{equation}\label{eq:composite}
	\begin{tikzcd}
		\llbracket . (r[x:= t_1])[y := t_2] \rrbracket\arrow[d]\arrow[r] &  \llbracket y: Y. r[x := t_1] \rrbracket\arrow[d]\arrow[r] & \llbracket x:X. r \rrbracket\arrow[d,"r"]\\
		\mathbbm{1}\arrow[r,"{\llbracket . t_2 \rrbracket}"] & Y\arrow[r,"{\llbracket y:Y. t_1\rrbracket}"] & X
	\end{tikzcd}
\end{equation}
On the other hand, $\llbracket .r [x:= t_1[y:= t_2]]\rrbracket$ is such that the following is a pullback diagram.
\begin{equation}\label{eq:subst}
	\begin{tikzcd}
		\llbracket . r[x := t_1[y := t_2]]\rrbracket\arrow[d]\arrow[r] & \llbracket x:X. r\rrbracket\arrow[d,"{r}"]\\
		\mathbbm{1}\arrow[r,"{\llbracket t_1[y := t_2]\rrbracket}"] & X
	\end{tikzcd}
\end{equation}
Hence, if we chose a subobject representative for each subobject, equality \eqref{eq:homotopy} would not be guaranteed. However, these two subobjects are necessarily isomorphic by virtue of them both being pullbacks of the same Diagram, as the morphisms in the bottom row of \eqref{eq:composite}, \eqref{eq:subst} are equal. Thus, formulas are subobjects, not chosen representatives of subobjects.
\end{remark}

	The Definition of entailment for this type theory is delayed until after Definition \ref{def:interpretation} below.
	Largely following Johnstone \cite[\S D4.1]{Johnstone}, associated to the pure internal logic of a topos $\call{E}$ is an \emph{interpretation} of this type theory in $\call{E}$, ie, an assignment of an object in $\call{E}$ to every type, a morphism with codomain $B$ to every tuple $(\Delta,t:B)$ consisting of a term $t$ along with a suitable context (where, as was given in Definition \ref{def:type_theory}, a context $\Delta$  is a suitable context for $t$ if every free variable of $t$ appears in $\Delta$), and a monomorphism to every tuple $(\Delta, p)$ where $p$ is a formula and $\Delta$ is a suitable context. In more detail, if $\Delta = (x_1:A_1,...,x_n:A_n)$ is a context and we have assigned objects $\llbracket A_i\rrbracket$ to each type $A_i$, and $t:B$ is a term for which $\Delta$ is a suitable context, then $\llbracket \Delta . t \rrbracket$ is a morphism.
	\[
	\begin{tikzcd}[column sep = huge]
		\llbracket A_1 \rrbracket \times ... \times \llbracket A_n \rrbracket\arrow[r,"{\llbracket \Delta . t\rrbracket}"] & \llbracket B \rrbracket
	\end{tikzcd}
	\]
	If $p$ is a formula for which $\Delta$ is a suitable context, then $\llbracket \Delta . p \rrbracket$ is a subobject. We will use $\llbracket \Delta. p \rrbracket$ to denote both this morphism and the domain of this morphism.
	\begin{equation}
		\llbracket \Delta. p \rrbracket \rightarrowtail \llbracket A_1 \rrbracket \times \hdots \times \llbracket A_n \rrbracket
	\end{equation}
	In what follows, we will make use of the Heyting algebra structure on the category of subobjects $\operatorname{C}$ associated to an object $C \in \call{E}$ and the perspective on quantifiers as adjoint functors between such categories. For a review of this material, see the Appendix Section \ref{sec:topos_struc}. Also, we introduce some notation.
	\begin{notation}\label{not:adj}
		Recall that any topos $\call{E}$ admits all exponentials. That is, for triple $A,B,C$ of objects in $\call{E}$ there is an object $C^A$ and a natural bijection $\operatorname{Hom}(A \times B, C) \cong \operatorname{Hom}(B,C^A)$. We denote the image of $f:A \times B \lto C$ under this bijection by $\adj{f}_C: B \lto C^A$. In the setting where there is low risk of ambiguity we simply write $\adj{f}$ for $\adj{f}_C$.
	\end{notation}
	\begin{definition}
		\label{def:interpretation}
		The \textbf{interpretation} $\llbracket \cdot \rrbracket$, of a type, term or formula of the internal logic of $\call{E}$ is defined by induction on the collection of objects and morphisms.
		\begin{itemize}
			\item \textbf{Types:} If $A$ is an object of $\call{E}$ viewed as a type, then $\llbracket A \rrbracket$ is this object. If $A \times B$ is a product type, then let $\llbracket A \times B \rrbracket$ be the chosen product $\llbracket A \rrbracket \times \llbracket B \rrbracket$. For terms of the form $PA$, let $\llbracket PA \rrbracket$ be the object $\Omega^A$.
			\item \textbf{Terms and formulas:} assume in what follows that $\Delta$ is always a suitable context. Also, for any context $\Sigma = (x_1:A_1,...,x_n:A_n)$, let $\overline{\Sigma}$ be $\llbracket A_1\rrbracket \times ... \times \llbracket A_n\rrbracket$ in $\call{E}$.
			\begin{itemize}
				\item If $x:A$ is a variable, then 
				$$\llbracket \Delta . x\rrbracket = \pi_A: \overline{\Delta} \to \llbracket A\rrbracket$$
				is the projection onto $\llbracket A\rrbracket$. In the case where $\Delta = A$ then $\llbracket \Delta . x\rrbracket$ is the identity morphism $\operatorname{id}_A$.
				\item $\llbracket \Delta . \ast \rrbracket = \overline{\Delta} \to \llbracket\mathbbm{1}\rrbracket$, the unique morphism into the terminal object.
				\item Recall from Theorem \ref{thm:heyting_sub} that $\text{Sub}(\overline{\Delta})$ is a Heyting algebra. In light of this, let $\llbracket \Delta . \top \rrbracket$ be the terminal object of this category, and $\llbracket \Delta \mid \bot \rrbracket$ the initial object of this category.
				\item If $t: A$ and $s: B$, then we interpret the product term $\langle t, s \rangle$ as the product morphism.
				$$\llbracket \Delta . \langle t,s \rangle \rrbracket = ( \llbracket \Delta . t \rrbracket, \llbracket \Delta . s \rrbracket )$$
				\item If $t: A \times B, then $ $\llbracket \Delta . \operatorname{fst}(t)\rrbracket = \pi_L\llbracket\Delta . t \rrbracket$, similarly for $\operatorname{snd}(t)$.
				\item If $f:A \to B$ is a function symbol, and $t:A$ a term, then $\llbracket \Delta \mid ft \rrbracket$ is the composite
				\[
				\begin{tikzcd}[column sep = huge]
					\overline{\Delta} \arrow[r,"{\llbracket \Delta . t \rrbracket}"] & \llbracket A\rrbracket\arrow[r,"f"] & \llbracket B\rrbracket
				\end{tikzcd}
				\]
				\item If $R \subseteq A$ is a relation symbol corresponding to a monomorphism $r: R \to A$, and $t: A$ a term, then $\llbracket \Delta . R(t) \rrbracket$ is the subobject $B \rightarrowtail \overline{\Delta}$ such that the following is a pullback diagram.
				\[
				\begin{tikzcd}
					B\arrow[r]\arrow[d,rightarrowtail] & R\arrow[d,rightarrowtail,"r"]\\
					\overline{\Delta}\arrow[r,swap,"{\llbracket \Delta . t\rrbracket}"] & A
				\end{tikzcd}
				\]
				\item $\llbracket \Delta . t = s\rrbracket$ is the following subobject.
				\[\operatorname{Equaliser}(\llbracket \Delta . t\rrbracket, \llbracket \Delta . s \rrbracket) \rightarrowtail \overline{\Delta}\]
				\item In the notation of Definition \ref{heytingnotation}, if $p$ and $q$ are formulas, then we define the following.
				\begin{align*}
					\llbracket \Delta . p \wedge q \rrbracket &= \llbracket \Delta . p \rrbracket \wedge \llbracket \Delta . q \rrbracket,\\
					\llbracket \Delta . p \vee q \rrbracket &= \llbracket \Delta . p \rrbracket \vee \llbracket \Delta . q \rrbracket,\\
					\llbracket \Delta . p \Rightarrow q \rrbracket &= \llbracket \Delta . p \rrbracket \Rightarrow \llbracket \Delta . q \rrbracket.
				\end{align*}
				\item Similarly, if $\lbrace p_i\rbrace_{i = 0}^\infty$ is a countably infinite set of formulas, then
				\[\llbracket \Delta . \bigvee_{i = 0}^\infty p_i\rrbracket = \bigvee_{i=1}^\infty\llbracket \Delta . p_i\rrbracket\]
				where $\displaystyle\bigvee_{i=1}^{\infty}$ is the countably infinite coproduct of the category $\text{Sub}(\overline{\Delta})$, this exists as we have assumed that $\call{E}$ admits countably infinite colimits, see Remark \ref{rmk:countable_colimits}.
				\item As was done at the beginning of Section \ref{helpfulLemmas}, let $\Delta, a:A$ mean the context $\Delta$ with $a:A$ appended to the end. Also, $\Delta\setminus a:A$ will mean $\Delta$ with $a:A$ omitted (assuming $a:A$ appears in $\Delta$). $\llbracket \Delta . \exists a: A, p\rrbracket$ is the image (see Definition \ref{def:image}) of the following composite.
				\[
				\begin{tikzcd}[column sep = huge]
					\llbracket \Delta. p\rrbracket\arrow[r,rightarrowtail] & \overline{\Delta}\arrow[r,"{\pi}"] & \overline{\Delta\setminus a:A}
				\end{tikzcd}
				\]
				Here, the morphism $\pi$ is given by a product of projection morphisms.
				\item Recall from Theorem \ref{adj} that the functor $\pi^\ast: \text{Sub}(\overline{\Delta \setminus a:A}) \to \text{Sub}(\overline{\Delta})$ admits a right adjoint, in accordance with Definition \ref{qunatifiers}, we denote this adjoint by 
				$\forall_\pi$. We define the following.
				\begin{equation}
					\llbracket \Delta . \forall a:A, p\rrbracket = \forall_\pi(\llbracket \Delta,a:A . p \rrbracket)
				\end{equation}        
				\item $\llbracket \Delta . \lbrace x : A . p \rbrace \rrbracket$ is the transpose of the morphism $\chi_{\llbracket \Delta, a:A . p(t) \rrbracket}$ which in turn is the unique morphism such that the following is a pullback diagram.
				\[
				\begin{tikzcd}[column sep = huge]
					\llbracket \Delta, a:A . p \rrbracket\arrow[r]\arrow[d] & \mathbbm{1}\arrow[d,"{\text{true}}"]\\
					\overline{\Delta} \times A\arrow[r,swap,"{\chi_{\llbracket \Delta, a:A . p(t) \rrbracket}}"] & \Omega
				\end{tikzcd}
				\]
				That is, $\llbracket \Delta . \{ x:A \mid p \} \rrbracket: \overline{\Delta} \to \Omega^A$ is $\adj{\chi_{\llbracket \Delta, a:A . p\rrbracket}}$ (see Notation \ref{not:adj}).
				\item $\llbracket \Delta . t \in \lbrace x:A \mid p \rbrace\rrbracket$ is the monic $C \rightarrowtail \overline{\Delta}$ such that the following is a pullback diagram, in the following the morphism $\in_A \rightarrowtail A \times \Omega^A$ is the subobject corresponding to the morphism $\operatorname{Eval_A}: A \times \Omega^A \lto \Omega$.
				\[
				\begin{tikzcd}[column sep = huge]
					C\arrow[d,rightarrowtail]\arrow[r] & \in_A\arrow[d,rightarrowtail]\\
					\overline{\Delta}\arrow[r,swap] & A \times \Omega^A
				\end{tikzcd}
				\]
				where the bottom morphism is ${( \llbracket \Delta . t \rrbracket,\llbracket \Delta . \lbrace x:A . p \rbrace \rrbracket)}$.
			\end{itemize}
		\end{itemize}
	\end{definition}
	Now, Definition \ref{def:internal_logic_pure} can be completed
	\begin{definition}[Finishing Definition \ref{def:internal_logic_pure}]
		We define $p \vdash_{\Delta} q$ to mean $\llbracket \Delta . p \rrbracket \leq \llbracket \Delta . q \rrbracket$, where $\leq$ is the preorder on the set $\text{Sub}(\overline{\Delta})$.
	\end{definition}
	In Definition \ref{def:type_theory} we defined the notion of a type theory with no reference to a topos $\call{E}$. In Definition \ref{def:internal_logic_pure} we defined a type theory corresponding to a given topos $\call{E}$ (where $\call{E}$ admits countably infinite colimits) which we called the \emph{pure} internal logic associated to $\call{E}$. The deduction rules of the pure internal logical are rather restrictive, so we make a new definition (Definition \ref{def:internal_logic_arb}) which allows these deduction rules to be extended to include extra rules, the \emph{derived rules} (Definition \ref{def:sound_deduction_rules}), provided that the result remains sound, that is, what is provable in the internal logic reflects true statements when interpreted in $\call{E}$. What extra deduction rules are allowed is made precise in Definition \ref{def:sound_deduction_rules} and some examples are given in Example \ref{ex:surjective_pairing} and Proposition \ref{prop:derived_rules}.
	\begin{definition}\label{def:sound_deduction_rules}
		Consider the pure internal logic of a topos $\call{E}$ admitting all countably infinite colimits. A \textbf{derived rule} consists of a pair $(\lbrace \mathfrak{a}_i\rbrace_{i \in I}, \mathfrak{b})$ consisting of a set $\lbrace \mathfrak{a}_i\rbrace_{i\in I}$ of sequents in the pure internal logic along with a sequent $\mathfrak{b}$ also in the pure internal logic subject to the condition that the interpretation of $\mathfrak{b}$ holds whenever the interpretation of all $\mathfrak{a_i}$ hold.
		
		Derived rules $(\lbrace \mathfrak{a}_i\rbrace_{i \in I},\mathfrak{b})$ where the set $I$ is finite will be represented by a horizontal line with the sequents in $\lbrace \mathfrak{a}_i\rbrace_{i \in I}$ displayed above the line and the sequent $\mathfrak{b}$ below the line, similar to the deduction rules of Definition \ref{def:internal_logic_pure}. For example, the following notate, from left to right, a derived rule where the set $I$ has $0,1$ and $2$ elements respectively.
		\begin{center}
			\AxiomC{$p \vdash_\Delta q$}
			\DisplayProof
			\qquad\qquad
			\AxiomC{$p\vdash_\Delta q$}
			\UnaryInfC{$r\vdash_{\Sigma} s$}
			\DisplayProof
			\qquad\qquad
			\AxiomC{$p\vdash_\Delta q$}
			\AxiomC{$s \vdash_\Sigma r$}
			\BinaryInfC{$t\vdash_\Omega v$}
			\DisplayProof
		\end{center}
		Such a derived rule is \textbf{finite}.
		
		In the case where the set $I$ is infinite, we will denote a derived rule $(\lbrace \frak{a}_i\rbrace_{i\in I}, \frak{b})$ as follows. Such a deduction rule is \textbf{infinitary}.
		\begin{center}
			\AxiomC{$\lbrace \frak{a}_i\rbrace_{i \in I}$}
			\UnaryInfC{$\frak{b}$}
			\DisplayProof
		\end{center}
	A derived rule where the indexing set $I$ is empty is a \textbf{derived sequent}.
	\end{definition}
	\begin{example}\label{ex:surjective_pairing}
		A simple example of a sound deduction rule which is not a deduction rule of the pure internal logic is the following sequent.
		\begin{equation}
			\vdash_{z:A \times B} \exists a:A, \exists b:B, z = \langle a, b \rangle
		\end{equation}
		\begin{center}
			\AxiomC{$\vdash_{z:A \times B}z$}
		\end{center}
	\end{example}
	\begin{definition}\label{def:internal_logic_arb}
		An \textbf{extended internal logic} of a topos $\call{E}$ is given by the pure internal logic along with a finite set of derived rules. In this paper, we will only be concerned with one internal logic which is given by the pure internal logic along with the derived rules given in Proposition \ref{prop:derived_rules} below. This is the \textbf{internal logic} (or \textbf{internal language}) of $\call{E}$.
	\end{definition}
	According to Definition \ref{def:interpretation} terms in the pure internal logic are interpreted as morphisms, and formulas are interpreted as subobjects. How can the pure internal logic be used to describe morphisms between subobjects? One way is given by the following Lemma. We remark that this is stated without proof as Lemma 1.2.7 in \cite[\S D]{Johnstone}.
	\begin{lemma}[Substitution property]\label{lem:sub_prop}
		Let $b_1:B_1,\hdots,b_n:B_n$ be a sequence of variables and let $t_1:B_1,...,t_n:B_n$ be a sequence of terms with $t_i$ having the same type as $b_i$ for each $i = 1,...,n$. Consider also a formula $q$ and a context $\Delta$ suitable for all of $t_1,...,t_n$. There exists a morphism $\delta$ such that the following Diagram is a pullback square.
		\begin{equation}\adjustbox{scale=0.85}{
			\begin{tikzcd}[column sep = huge]
				\overline{\Delta}\arrow[rr,"{(\llbracket \Delta. t_1 \rrbracket,...,\llbracket \Delta. t_n \rrbracket)}"] && B_1 \times \hdots \times B_n\\
				\llbracket \Delta. q[(b_1,...,b_n) := (t_1,...,t_n)]\rrbracket\arrow[u,rightarrowtail]\arrow[rr,"{\delta}"] && \llbracket (b_1:B_1,...,b_n:B_n). q\rrbracket\arrow[u,rightarrowtail]
			\end{tikzcd}}
		\end{equation}
	\end{lemma}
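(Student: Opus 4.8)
The plan is to realise $\llbracket \Delta. q[(b_1,\ldots,b_n):=(t_1,\ldots,t_n)]\rrbracket$ as the pullback of $\llbracket \Gamma. q\rrbracket$ along the top edge of the square, where $\Gamma := (b_1:B_1,\ldots,b_n:B_n)$ and I abbreviate the top edge as $\rho := (\llbracket \Delta. t_1\rrbracket,\ldots,\llbracket\Delta.t_n\rrbracket):\overline{\Delta}\to\overline{\Gamma}$. The comparison map $\delta$ is then forced to be the canonical morphism into the pullback, so the content of the Lemma is the equality of subobjects $\llbracket \Delta. q[\vec b:=\vec t]\rrbracket = \rho^\ast\llbracket\Gamma.q\rrbracket$ (note that suitability of the right-hand context forces $\operatorname{FV}(q)\subseteq\{\vec b\}$). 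I would prove this by induction on the structure of $q$, carried out simultaneously with the analogous statement for terms, namely $\llbracket\Delta.u[\vec b:=\vec t]\rrbracket = \llbracket\Gamma.u\rrbracket\circ\rho$ for every term $u$ with $\operatorname{FV}(u)\subseteq\{\vec b\}$. The two statements must be interleaved, because comprehension terms $\{x:A\mid p\}$ contain formulas while relation, equality and membership formulas contain terms. Throughout I use the capture-avoidance built into rule \eqref{rule:sub}, together with the weakening fact $\llbracket\Delta,a:A.t_i\rrbracket=\llbracket\Delta.t_i\rrbracket\circ\pi'$, so that substitution commutes with every formation rule of Definition \ref{def:type_theory}.

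The term cases and the propositional cases are routine. For terms, the variable case is the identity $\pi_i\circ\rho=\llbracket\Delta.t_i\rrbracket$, the case of $\ast$ is uniqueness of maps into $\mathbbm{1}$, and the cases $\langle u,v\rangle$, $\operatorname{fst}$, $\operatorname{snd}$ and $fu$ follow by unwinding Definition \ref{def:interpretation} and using that composition commutes with pairing, projection and post-composition. For the connectives $\wedge,\vee,\Rightarrow$, the constants $\top,\bot$, and the countable disjunction $\bigvee_{i=0}^\infty p_i$, I would use that in a topos the pullback functor $\rho^\ast:\operatorname{Sub}(\overline{\Gamma})\to\operatorname{Sub}(\overline{\Delta})$ has both adjoints, hence is a homomorphism of Heyting algebras preserving all joins; with the inductive hypothesis this settles these clauses immediately, e.g. $\rho^\ast(\llbracket\Gamma.p\rrbracket\wedge\llbracket\Gamma.q\rrbracket)=\rho^\ast\llbracket\Gamma.p\rrbracket\wedge\rho^\ast\llbracket\Gamma.q\rrbracket$. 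For $R(u)$ and $t\in T$, the interpretation is a pullback of a fixed subobject (of $r:R\rightarrowtail A$, resp. of $\in_A\rightarrowtail A\times\Omega^A$) along a morphism which, after substitution and by the term hypothesis, factors through $\rho$; the pasting lemma for pullbacks then identifies the result with $\rho^\ast\llbracket\Gamma.R(u)\rrbracket$, resp. $\rho^\ast\llbracket\Gamma.(t\in T)\rrbracket$. The case $u=v$ is the same argument using stability of equalisers under pullback. For the comprehension term $\{x:A\mid p\}$ I would apply the formula hypothesis to $p$ in the extended context $(\Delta,x:A)$ to get $\llbracket\Delta,x:A.p[\vec b:=\vec t]\rrbracket=(\rho\times\operatorname{id}_A)^\ast\llbracket\Gamma,x:A.p\rrbracket$; writing $\chi$ for the characteristic map of $\llbracket\Gamma,x:A.p\rrbracket$, this gives $\chi_{\llbracket\Delta,x:A.p[\vec b:=\vec t]\rrbracket}=\chi\circ(\rho\times\operatorname{id}_A)$, and naturality of the exponential transpose gives $\adj{\chi\circ(\rho\times\operatorname{id}_A)}=\adj{\chi}\circ\rho$, which is exactly the term statement.

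The hard part will be the quantifier clauses, where the key device is to present the substituted tuple as a product. Consider $\exists a:A,p$ with $\operatorname{FV}(p)\subseteq\{\vec b,a\}$, and write $\pi:\overline{\Gamma}\times A\to\overline{\Gamma}$ and $\pi':\overline{\Delta}\times A\to\overline{\Delta}$ for the projections, so that $\llbracket\Gamma.\exists a:A,p\rrbracket=\exists_\pi\llbracket\Gamma,a:A.p\rrbracket$ and $\llbracket\Delta.(\exists a:A,p)[\vec b:=\vec t]\rrbracket=\exists_{\pi'}\llbracket\Delta,a:A.p[\vec b:=\vec t]\rrbracket$. Applying the formula hypothesis to $p$ in context $(\Delta,a:A)$ under the substitution $(\vec b,a):=(\vec t,a)$ — whose interpreting tuple is, by the weakening fact, exactly $\rho\times\operatorname{id}_A$ — gives $\llbracket\Delta,a:A.p[\vec b:=\vec t]\rrbracket=(\rho\times\operatorname{id}_A)^\ast\llbracket\Gamma,a:A.p\rrbracket$. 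The square with horizontal edges $\rho\times\operatorname{id}_A$, $\rho$ and vertical edges $\pi'$, $\pi$ is a pullback, so the Beck--Chevalley identity $\exists_{\pi'}\circ(\rho\times\operatorname{id}_A)^\ast=\rho^\ast\circ\exists_\pi$ yields $\llbracket\Delta.(\exists a:A,p)[\vec b:=\vec t]\rrbracket=\rho^\ast\llbracket\Gamma.\exists a:A,p\rrbracket$, as required; the universal case is verbatim with the dual identity $\forall_{\pi'}\circ(\rho\times\operatorname{id}_A)^\ast=\rho^\ast\circ\forall_\pi$ for the right adjoints of Theorem \ref{adj}.

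I expect the genuine obstacle to be exactly this Beck--Chevalley step: the $\exists$-version is a reformulation of the stability of image factorisations (Definition \ref{def:image}) under pullback, while the $\forall$-version must be extracted from the adjoint characterisation of $\forall_\pi$, and both should be isolated as Appendix lemmas before the induction is run. The only other delicate point is the context bookkeeping — verifying the weakening identity and checking that no free variable of a $t_i$ is captured on passing to the extended contexts $(\Delta,a:A)$ and $(\Delta,x:A)$ — which is what licenses applying the inductive hypothesis in the comprehension and quantifier cases.
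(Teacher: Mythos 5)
The paper itself does not prove this Lemma: it is quoted from Johnstone (Lemma D1.2.7 of the Elephant), and the text explicitly remarks that it is stated there \emph{without proof}. So there is no proof in the paper to compare yours against; your proposal fills a genuine gap, and it does so along the standard lines. Your reduction of the pullback square to the subobject equality $\llbracket\Delta.\,q[\vec b:=\vec t\,]\rrbracket=\rho^\ast\llbracket\Gamma.\,q\rrbracket$, the mutual structural induction on terms and formulas, the pasting-of-pullbacks argument for $R(u)$ and $t\in T$, stability of equalisers for $u=v$, naturality of the exponential transpose for comprehension, and Beck--Chevalley for the quantifier clauses together constitute the correct (and essentially the only known) proof of this statement; the case analysis matches the formation rules of Definition \ref{def:type_theory} with nothing omitted.

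Two caveats on your justifications. First, the inference ``$\rho^\ast$ has both adjoints, hence is a homomorphism of Heyting algebras'' is too quick for the implication clause: the existence of $\exists_\rho$ and $\forall_\rho$ gives preservation of all meets and all joins (including the countable $\bigvee$ needed here), but preservation of $\Rightarrow$ does \emph{not} follow formally from adjoint existence alone; it requires Frobenius reciprocity, $\exists_\rho(z\wedge\rho^\ast x)=\exists_\rho(z)\wedge x$, which does hold for pullback functors in a topos but should be invoked explicitly (it is part of what \cite[\S A1.4.13]{Johnstone}, already cited in the Appendix, establishes). Second, you rightly isolate the Beck--Chevalley identities as the real content of the quantifier cases: the $\exists$-version is stability of image factorisations under pullback, which in a topos follows from pullback-stability of epimorphisms, and the $\forall$-version then follows by taking mates, using that the transpose of the product-projection square is again a pullback. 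Finally, note that your weakening identity $\llbracket\Delta,a:A.\,t_i\rrbracket=\llbracket\Delta.\,t_i\rrbracket\circ\pi'$ is itself a (routine but necessary) preliminary induction, since it is exactly what identifies the interpreting tuple of the extended substitution $(\vec b,a):=(\vec t,a)$ with $\rho\times\operatorname{id}_A$; without it the inductive hypothesis cannot be applied in the comprehension and quantifier cases. With those three supporting lemmas made explicit, your induction closes and the proof is complete.
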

	The pure internal logic is insufficient for the result we prove in Section \ref{sec:colimits}. We now define the set of derived rules which will be required.
	\begin{proposition}\label{prop:derived_rules}
		The following are derived rules.
		\begin{enumerate}
			\item\label{rule:element} Let $a:A$ be a variable of type $A$ and $b:B$ a variable of type $B$. Let $t(a)$ be a term with free variable set $\lbrace a:A\rbrace$ and let $p,q$ be formulas and assume $b:B \in \operatorname{FV}(q)$. We have the following:
			\begin{center}
				\AxiomC{$p\vdash_\Delta t(a) \in \lbrace b:B \mid q\rbrace$}
				\RightLabel{$(\operatorname{Mem})_1$}
				\UnaryInfC{$p \vdash_\Delta q[b := t(a)]$}
				\DisplayProof
				\end{center}
			as well as:
			\begin{center}
				\AxiomC{$p \vdash_{\Delta}q[b := t(a)]$}
				\RightLabel{$(\operatorname{Mem})_2$}
				\UnaryInfC{$p\vdash_{\Delta}t(a) \in \lbrace b:B \mid q\rbrace$}
				\DisplayProof
			\end{center}
			(The label $\operatorname{Mem}$ stands for ``member")
			\item\label{rule:infinite} Consider a countable set of formulas $\lbrace p_i\rbrace_{i \geq 0}$, a formula $q$, a context $\Sigma$ such that the free variables of all $p_i$ and $q$ appear in $\Sigma$, and a countable set of sequents $p_i \vdash_{\Sigma} q$. Recall also the notation for infinite derived rules given in Definition \ref{def:sound_deduction_rules}.
			\begin{center}
				\AxiomC{$\lbrace p_i \vdash_{\Sigma} q\rbrace_{i \geq 0}$}
				\RightLabel{$(\bigvee I)$}
				\UnaryInfC{$\bigvee_{i \geq 0}p_i \vdash_\Sigma q$}
				\DisplayProof
				\qquad
				\AxiomC{$\bigvee_{i\geq 0}p_i \vdash_\Sigma q$}
				\RightLabel{$(\bigvee E)_i$}
				\UnaryInfC{$p_i \vdash_{\Sigma} q$}
				\DisplayProof
			\end{center}
			\item\label{rule:set} Let $p,q$ be formulas and $\Delta$ a valid context for them. We have the following:
			\begin{center}
				\AxiomC{$\vdash_{\Delta} \lbrace a:A \mid p\rbrace = \lbrace a:A \mid q \rbrace$}
				\RightLabel{$(\lbrace \cdot \rbrace E)$}
				\UnaryInfC{$\vdash_{\Delta,a:A}p \Leftrightarrow q$}
				\DisplayProof
				\end{center}
			as well as:
			\begin{center}
				\AxiomC{$\vdash_{\Delta,a:A}p \Leftrightarrow q$}
				\RightLabel{$(\lbrace \cdot \rbrace I)$}
				\UnaryInfC{$\vdash_{\Delta} \lbrace a:A \mid p\rbrace = \lbrace a:A \mid q \rbrace$}
				\DisplayProof
			\end{center}
		\end{enumerate}
	\end{proposition}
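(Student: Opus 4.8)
The plan is to verify each family of rules directly against the semantic criterion of Definition~\ref{def:sound_deduction_rules}. Since $p \vdash_\Delta q$ abbreviates $\llbracket \Delta . p \rrbracket \leq \llbracket \Delta . q \rrbracket$ in the preorder $\operatorname{Sub}(\overline{\Delta})$, every rule becomes an assertion about subobjects, and it suffices to deduce the conclusion's inequality (or subobject equality) from the premises'. All three cases then rest on three ingredients already available: the lattice and Heyting-algebra structure of $\operatorname{Sub}(\overline{\Delta})$ (Theorem~\ref{thm:heyting_sub}), the universal property of the subobject classifier, and the exponential transpose of Notation~\ref{not:adj}.

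For $(\operatorname{Mem})_1$ and $(\operatorname{Mem})_2$ the key step, and the main obstacle, is to show that the two subobjects $\llbracket \Delta . t(a) \in \lbrace b:B \mid q\rbrace\rrbracket$ and $\llbracket \Delta . q[b := t(a)]\rrbracket$ are \emph{equal} in $\operatorname{Sub}(\overline{\Delta})$; once this is established both rules are immediate, since entailment depends only on the subobject and not on a representative. To identify them I would unwind the interpretation of $t(a) \in \lbrace b:B \mid q\rbrace$ as the pullback of $\in_B \rightarrowtail B \times \Omega^B$ along $(\llbracket \Delta . t(a)\rrbracket, \llbracket \Delta . \lbrace b:B\mid q\rbrace\rrbracket)$, and recall that $\in_B$ is classified by $\operatorname{Eval}_B$ while $\llbracket \Delta . \lbrace b:B\mid q\rbrace\rrbracket = \adj{\chi}$ for $\chi$ the characteristic map of $\llbracket \Delta,b:B . q\rrbracket$. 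Using $(\llbracket \Delta . t(a)\rrbracket, \adj{\chi}) = (\operatorname{id}_B \times \adj{\chi})\circ(\llbracket \Delta . t(a)\rrbracket, \operatorname{id}_{\overline{\Delta}})$ together with the defining property $\operatorname{Eval}_B \circ (\operatorname{id}_B \times \adj{\chi}) = \chi$ (up to the reordering of factors implicit in Notation~\ref{not:adj}), the classifying map of $t(a) \in \lbrace b:B\mid q\rbrace$ is $\chi$ precomposed with the graph of $\llbracket \Delta . t(a)\rrbracket$. By the Substitution Lemma~\ref{lem:sub_prop} this is precisely the classifying map of $\llbracket \Delta . q[b := t(a)]\rrbracket$, so uniqueness of classifying maps forces the two subobjects to coincide.

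For $(\bigvee I)$ and $(\bigvee E)_i$ the interpretation $\llbracket \Sigma . \bigvee_{i\geq 0}p_i\rrbracket$ is, by Definition~\ref{def:interpretation}, the countable join $\bigvee_i \llbracket \Sigma . p_i\rrbracket$ in $\operatorname{Sub}(\overline{\Sigma})$, which exists because $\call{E}$ has countable colimits. Rule $(\bigvee I)$ is then exactly the universal property of this join as a least upper bound: if $\llbracket \Sigma.p_i\rrbracket \leq \llbracket \Sigma.q\rrbracket$ for every $i$, then $\llbracket \Sigma.q\rrbracket$ is an upper bound of the family and hence dominates its supremum. Rule $(\bigvee E)_i$ follows from the dual observation that each $\llbracket \Sigma.p_i\rrbracket \leq \bigvee_j \llbracket \Sigma.p_j\rrbracket$, so that $\bigvee_j \llbracket \Sigma.p_j\rrbracket \leq \llbracket \Sigma.q\rrbracket$ yields $\llbracket \Sigma.p_i\rrbracket \leq \llbracket \Sigma.q\rrbracket$ by transitivity; neither direction presents any difficulty.

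Finally, for $(\lbrace \cdot \rbrace I)$ and $(\lbrace \cdot \rbrace E)$ I would translate both ends of the rule into subobject language. The derived sequent $\vdash_\Delta \lbrace a:A\mid p\rbrace = \lbrace a:A\mid q\rbrace$ asserts that the equaliser of $\llbracket \Delta.\lbrace a:A\mid p\rbrace\rrbracket$ and $\llbracket \Delta.\lbrace a:A\mid q\rbrace\rrbracket$ is all of $\overline{\Delta}$, i.e.\ that these two morphisms $\overline{\Delta}\to\Omega^A$ agree; whereas $\vdash_{\Delta,a:A}p\Leftrightarrow q$ asserts, via the Heyting-algebra fact that $x\Rightarrow y=\top$ iff $x\leq y$ in $\operatorname{Sub}(\overline{\Delta}\times A)$ (Theorem~\ref{thm:heyting_sub}), exactly that $\llbracket \Delta,a:A.p\rrbracket = \llbracket \Delta,a:A.q\rrbracket$ as subobjects of $\overline{\Delta}\times A$. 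The equivalence of these two conditions is then immediate from two bijections: the transpose of Notation~\ref{not:adj} is injective, so the maps $\overline{\Delta}\to\Omega^A$ agree iff their transposes $\chi_p,\chi_q:\overline{\Delta}\times A\to\Omega$ (the characteristic maps of $\llbracket \Delta,a:A.p\rrbracket$ and $\llbracket \Delta,a:A.q\rrbracket$) agree; and the subobject classifier makes characteristic maps correspond bijectively to subobjects, so $\chi_p=\chi_q$ iff $\llbracket \Delta,a:A.p\rrbracket=\llbracket \Delta,a:A.q\rrbracket$. Reading this chain of equivalences in the two directions yields $(\lbrace \cdot \rbrace I)$ and $(\lbrace \cdot \rbrace E)$ respectively.
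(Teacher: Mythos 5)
Your proposal is correct, and its backbone is the same as the paper's: translate each rule, via the definition of $\vdash_\Delta$, into a statement about subobjects of $\overline{\Delta}$ and verify it by unwinding Definition \ref{def:interpretation}. Part 2 coincides exactly with the paper's argument (the least-upper-bound property of the countable join in $\operatorname{Sub}(\overline{\Sigma})$). You genuinely diverge in parts 1 and 3, in both cases by proving something cleaner. For part 1 the paper never asserts that $\llbracket \Delta. t(a) \in \lbrace b:B \mid q\rbrace \rrbracket$ and $\llbracket \Delta. q[b := t(a)]\rrbracket$ are equal as subobjects; it proves only the biconditional that $\llbracket \Delta. p \rrbracket$ factors through one iff it factors through the other, by a chase through the universal properties of the pasted pullback squares \eqref{eq:pullback_squares} (and writes out only one direction, asserting the converse is similar). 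Your route --- compute both classifying maps, observe that they coincide using the transpose identity $\operatorname{Eval}_B \circ (\operatorname{id}_B \times \adj{\chi}) = \chi$ together with Lemma \ref{lem:sub_prop}, then invoke uniqueness of the subobject classified by a map --- establishes the stronger statement of subobject equality, from which both $(\operatorname{Mem})$ rules are immediate. The commuting square you rely on is precisely the paper's Diagram \eqref{eq:intermediate_commuting}, so the ingredients are identical; your packaging is stronger and shorter, at the cost of explicitly invoking the pasting lemma for pullbacks and uniqueness of classifying maps, which the paper keeps implicit in its universal-property chase. For part 3 the paper first reduces $(\lbrace \cdot \rbrace E)$ and $(\lbrace \cdot \rbrace I)$ to auxiliary Leibniz rules by appealing to the already-established rules of part 1, and only then unwinds interpretations; you bypass that reduction entirely with the chain: the equaliser is all of $\overline{\Delta}$ iff the transposed maps $\overline{\Delta} \to \Omega^A$ agree, iff the characteristic maps agree, iff $\llbracket \Delta, a:A. p \rrbracket = \llbracket \Delta, a:A. q \rrbracket$, iff $\vdash_{\Delta,a:A} p \Leftrightarrow q$ (the last step being the Heyting-algebra fact that $x \Leftrightarrow y$ is the top element iff $x = y$). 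This makes part 3 independent of part 1, whereas in the paper part 3 structurally depends on it; what the paper's detour buys is an illustration of composing derived rules, while yours buys brevity and a self-contained argument. I see no gap in either deviation.
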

	\begin{proof}
		First we prove \eqref{rule:element}.
		
		This is mostly a matter of unwinding definitions. First we calculate $\llbracket \Delta. t(a) \in \lbrace b:B \mid q\rbrace\rrbracket$. By Definition \ref{def:interpretation} we have that $\llbracket \Delta. \lbrace b:B \mid q\rbrace\rrbracket$ is the transpose of the unique morphism $\chi_{\llbracket \Delta,b:B. q\rrbracket}$ such that the following is a pullback diagram.
		\begin{equation}
			\begin{tikzcd}[column sep = huge]
				\llbracket \Delta,b:B. q\rrbracket\arrow[r]\arrow[d,rightarrowtail] & \mathbbm{1}\arrow[d]\\
				\overline{\Delta} \times B\arrow[r,swap,"{\chi_{\llbracket \Delta, b:B. q\rrbracket}}"] & \Omega
			\end{tikzcd}
		\end{equation}
		Then, the morphism $\llbracket \Delta. t(a) \in \lbrace b:B \mid q\rbrace \rrbracket$ is the monomorphism $\overline{\Delta} \lto B \times \Omega^B$ which has been chosen so that the following is a pullback square.
		\begin{equation}
			\begin{tikzcd}[column sep = huge]
				\llbracket \Delta. t(a) \in \lbrace b:B \mid q \rbrace\rrbracket\arrow[rr]\arrow[d,rightarrowtail] & & \in_B\arrow[d,rightarrowtail]\\
				\overline{\Delta}\arrow[rr,swap,"{(\llbracket \Delta. t(a)\rrbracket, \adj{\chi_{\llbracket \Delta,b:B. q\rrbracket}})}"] & & B \times \Omega^B
			\end{tikzcd}
		\end{equation}
		Next we describe $\llbracket \Sigma. q[b:= t(a)]$. We will need an explicit description for $\Sigma$, say $\Sigma = (b:B, b_1:B_1,...,b_n:B_n)$, there is a small loss of generality here because we assume that $b:B$ appears as the first element in $\Sigma$ but the proof of the other cases differ only trivially to this case. By Lemma \ref{lem:sub_prop} we have that the following is a pullback Diagram, where $b_1:B_1,...,b_n:B_n$ are variables.
		\begin{equation}
			\begin{tikzcd}[column sep = huge]
				\llbracket \Delta. q[b:=t(a)]\rrbracket\arrow[rr]\arrow[d] && \llbracket\Sigma. q \rrbracket \arrow[d,rightarrowtail]\\
				\overline{\Delta}\arrow[rr,swap,"{(\llbracket \Delta. t(a)\rrbracket,\llbracket \Delta. b_1\rrbracket,\hdots, \llbracket \Delta. b_n\rrbracket)}"] && B \times B_1 \times \hdots B_n
			\end{tikzcd}
		\end{equation}
		Now, to show that $(\operatorname{Mem})_1,(\operatorname{Mem}_2)$ are derived rules, we have to show the following.
		\begin{equation}
			\llbracket \Delta. p\rrbracket \leq \llbracket \Delta. t(a) \in \lbrace b:B \mid q\rbrace \rrbracket\text{ if and only if }\llbracket \Delta. p\rrbracket \leq \llbracket \Delta. q[b:=t(a)]\rrbracket
		\end{equation}
		In other words, we must show that there exists a morphism $c: \llbracket \Delta. p\rrbracket \lto \llbracket \Delta. t(a) \in \lbrace b:B \mid q \rbrace \rrbracket$ such that the Diagram on the left of \eqref{eq:entails_interp} commutes if and only if there exists a morphism $d: \llbracket \Delta. p(a)\rrbracket \lto \llbracket \Delta. q[b:=t(a)]\rrbracket$ so that the Diagram on the right of \eqref{eq:entails_interp} commutes. The proof of both directions are similar to each other so we only show the forwards implication, we have labelled the morphism $\llbracket \Delta. p(a)\rrbracket \rightarrowtail \overline{\Delta}$ by $\delta$ for the sake of future reference.
		\begin{equation}\label{eq:entails_interp}
			\begin{tikzcd}
				\llbracket \Delta. p(a)\rrbracket\arrow[r,"c"]\arrow[dr,rightarrowtail,swap,"{\delta}"] & \llbracket \Delta. t(a) \in \lbrace b:B \mid q \rbrace \rrbracket\arrow[d,rightarrowtail]\\
				& \overline{\Delta}
			\end{tikzcd}
			\qquad
			\begin{tikzcd}
				\llbracket \Delta. p(a)\rrbracket\arrow[r,"d"]\arrow[dr,rightarrowtail, swap, "{\delta}"] & \llbracket \Delta. q[b:=t(a)]\rrbracket\arrow[d,rightarrowtail]\\
				& \overline{\Delta}
			\end{tikzcd}
		\end{equation}
		First, we consider Diagram \eqref{eq:intermediate_commuting}. The bottom right triangle of this Diagram commutes as $\Omega^B$ is an exponential. The top left triangle commutes just by definition of the morphisms involved. Hence the outer square is commutative.
		\begin{equation}
			\begin{tikzcd}[column sep = huge, row sep = huge]\label{eq:intermediate_commuting}
				\overline{\Delta}\arrow[rrr,"{(\llbracket \Delta. t(a)\rrbracket, \adj{\chi_{\llbracket \Delta, b:B. q\rrbracket}})}"]\arrow[dd,swap,"{(\llbracket \Delta. t(a)\rrbracket,\llbracket \Delta. b_1\rrbracket,...,\llbracket \Delta. b_n\rrbracket)}"] & & & B \times \Omega^B \arrow[dd,"{\operatorname{Eval}_B}"]\\
				\\
				\overline{\Sigma}\arrow[rrr,swap,"{\chi_{\llbracket \Sigma. q\rrbracket}}"]\arrow[uurrr,"{(\pi_B, \adj{\chi_{\llbracket \Sigma. q\rrbracket}}\pi_{\overline{\Sigma\setminus B}})}"] & & & \Omega
			\end{tikzcd}
		\end{equation}
		We now use this to prove the statement at hand. Say the left Triangle of \eqref{eq:entails_interp} commutes.  We consider the following two Diagrams.
		\begin{equation}\label{eq:pullback_squares}
			\begin{tikzcd}[column sep = huge]
				C\arrow[rr]\arrow[d] & & \in_B\arrow[r]\arrow[d,rightarrowtail] & \mathbbm{1}\arrow[d,swap,"{\operatorname{True}}"]\\
				\overline{\Delta}\arrow[rr,swap,"{(\llbracket \Delta. t(a)\rrbracket, \adj{\chi_{\llbracket \Delta, b:B. q\rrbracket}})}"] & & B \times \Omega^B\arrow[r,swap,"{\operatorname{Eval}_B}"] & \Omega
			\end{tikzcd}
			\end{equation}
		\begin{equation}
			\begin{tikzcd}[column sep = huge]
				D\arrow[rr]\arrow[d] &&\llbracket \Sigma. q\rrbracket\arrow[r]\arrow[d,rightarrowtail,"q"] & \mathbbm{1}\arrow[d,"{\operatorname{True}}"]\\
				\overline{\Delta}\arrow[rr,swap,"{(\llbracket \Delta. t(a)\rrbracket,\llbracket \Delta,.b_1\rrbracket,...,\llbracket \Delta. b_n\rrbracket)}"] && \overline{\Sigma}\arrow[r,swap,"{\chi_{\llbracket \Sigma. q\rrbracket}}"] & \Omega
			\end{tikzcd}
		\end{equation}
		Both of these Diagrams consist of two pullback squares, and so both outer squares are pullback squares. Since the outer square of the bottom Diagram of \eqref{eq:pullback_squares} is a pullback square, to construct a morphism $\llbracket \Delta. p(a)\rrbracket \lto D$ it suffices to define a morphism $\gamma: \llbracket \Delta. p(a)\rrbracket \lto \overline{\Delta}$ such that the following composite:
		\begin{equation}
			\begin{tikzcd}[column sep = huge]
				\llbracket \Delta. p\rrbracket\arrow[r,rightarrowtail,"{\gamma}"] & \overline{\Delta}\arrow[rrr,"{(\llbracket \Delta. t(a)\rrbracket,\llbracket \Delta. b_1\rrbracket,\hdots, \llbracket \Delta. b_n\rrbracket)}"] &&& \overline{\Sigma}\arrow[d,"{\chi_{\llbracket b:B. q\rrbracket}}"]\\
				&&&& \Omega
			\end{tikzcd}
		\end{equation}
		is equal to the following composite.
		\begin{equation}\label{eq:true_stuff}
			\begin{tikzcd}
				\llbracket \Delta. p\rrbracket\arrow[r] & \mathbbm{1}\arrow[r,"{\operatorname{True}}"] & \Omega
			\end{tikzcd}
		\end{equation}
		By commutativity of the outer square on the top Diagram of \eqref{eq:pullback_squares}, we have that the morphism \eqref{eq:true_stuff} is equal to the following composite.
		\begin{equation}
			\begin{tikzcd}[column sep = huge]
				\llbracket \Delta. p(a)\rrbracket\arrow[r,rightarrowtail,"{\delta}"] & \overline{\Delta}\arrow[rr,"{(\llbracket \Delta. t(a)\rrbracket, \adj{\chi_{\llbracket \Delta, b:B. q\rrbracket}})}"] & & B \times \Omega^B\arrow[d,"{\operatorname{Eval}_B}"]\\
				&&& \Omega
			\end{tikzcd}
		\end{equation}
		Hence, by observing commutativity of Diagram \eqref{eq:intermediate_commuting}, we see that $\delta$ is an appropriate choice for $\gamma$.
		
		Now we prove \eqref{rule:infinite}. We notice that the collection of subobjects $\llbracket \Sigma. p_i\rrbracket$ factors through the subobject $\llbracket \Sigma. q\rrbracket$ if and only if the coproduct $\bigvee_{i \geq 0}\llbracket \Sigma. p_i \rrbracket$ (in the category $\operatorname{Sub}(\overline{\Sigma})$) factors through $\llbracket \Sigma. q \rrbracket$. This statement is exactly the statement that $(\bigvee I)$ and $(\bigvee E)_i$ in \eqref{rule:infinite} are derived rules.
		
		Finally, we prove \eqref{rule:set}. In light of the derived deduction rules \eqref{rule:element} it suffices to show the following are derived deduction rules.
		\begin{center}
			\AxiomC{$\vdash_{\Delta} \lbrace a:A \mid p \rbrace = \lbrace a:A \mid q \rbrace$}
			\RightLabel{$(\operatorname{Leib})_1$}
			\UnaryInfC{$\vdash_{\Delta,a':A}a' \in \lbrace a:A \mid p \rbrace \Leftrightarrow a' \in \lbrace a:A \mid q \rbrace$}
			\DisplayProof
			\end{center}
		and
		\begin{center}
			\AxiomC{$\vdash_{\Delta,a':A}a' \in \lbrace a:A \mid p \rbrace \Leftrightarrow a' \in \lbrace a:A \mid q \rbrace$}
			\RightLabel{$(\operatorname{Leib})_2$}
			\UnaryInfC{$\vdash_{\Delta} \lbrace a:A \mid p \rbrace = \lbrace a:A \mid q \rbrace$}
			\DisplayProof
		\end{center}
		Recall from Definition \ref{def:interpretation} that $\llbracket \Delta. \lbrace a:A \mid p \rbrace = \lbrace a:A \mid q\rbrace\rrbracket \rightarrowtail \overline{\Delta}$ is the monomorphism rendering the following an equaliser diagram.
		\begin{equation}
			\begin{tikzcd}[column sep = large]
				\llbracket \Delta. \lbrace a:A \mid p \rbrace = \lbrace a:A \mid q\rbrace\rrbracket\arrow[r,rightarrowtail] & \overline{\Delta}\arrow[rr,shift left, "{\llbracket \Delta. \lbrace a:A \mid p \rbrace\rrbracket}"]\arrow[rr,shift right,swap,"{\llbracket \Delta. \lbrace a:A \mid q \rbrace \rrbracket}"] && \Omega^A
			\end{tikzcd}
		\end{equation}
		Hence, the sequent $\vdash_\Delta \lbrace a:A \mid p \rbrace = \lbrace a:A \mid q \rbrace$ holds if and only if $\llbracket \Delta. \lbrace a:A \mid p \rbrace\rrbracket = \llbracket \Delta. \lbrace a:A \mid q \rbrace\rrbracket$.
		
		On the other hand, $\llbracket \Delta, a':A. a' \in \lbrace a:A \mid p \rbrace \rrbracket$ and $\llbracket \Delta, a':A. a' \in \lbrace a:A \mid p \rbrace \rrbracket$ are such that the following Diagrams are pullback squares.
		\begin{equation}
			\begin{tikzcd}[column sep = large]
				\llbracket \Delta, a':A. a' \in \lbrace a:A \mid p \rbrace \rrbracket\arrow[d,rightarrowtail]\arrow[rr] && \in_A\arrow[d,rightarrowtail]\\
				\overline{\Delta, a:A}\arrow[rr,swap,"{(\llbracket \Delta, a:A.a\rrbracket, \llbracket \Delta,a:A. \lbrace a:A \mid p \rbrace\rrbracket)}"] && A \times \Omega^A
			\end{tikzcd}
			\end{equation}
		\begin{equation}
			\begin{tikzcd}[column sep = large]
				\llbracket \Delta, a':A. a' \in \lbrace a:A \mid q \rbrace \rrbracket\arrow[d,rightarrowtail]\arrow[rr] && \in_A\arrow[d,rightarrowtail]\\
				\overline{\Delta, a:A}\arrow[rr,swap,"{(\llbracket \Delta, a:A.a\rrbracket, \llbracket \Delta,a:A. \lbrace a:A \mid q \rbrace\rrbracket)}"] && A \times \Omega^A
			\end{tikzcd}
		\end{equation}
		Hence, that these two subobjects being equal to each other is equivalent to the statement that $\llbracket \Delta, a:A. \lbrace a :A \mid p \rbrace \rrbracket = \llbracket \Delta, a:A. \lbrace a:A \mid q \rbrace \rrbracket$. This shows that $(\operatorname{Leib})_1,(\operatorname{Leib})_2$ are derived deduction rules.
	\end{proof}
	
	\section{Using an internal logic}\label{sec:crucial_lemma}
	It is often convenient when working with an internal logic to have all objects and morphisms of current interest described via the internal logic. Therefore, we dedicate Section \ref{sec:object_term} to showing how to construct a description of an arbitrary element $U$ of the topos via a formula in an internal logic. After this, in Section \ref{sec:crucial_terms} we define some terms of particular interest to our aim of describing finite colimits using an internal logic.
	\subsection{The term associated to an object}\label{sec:object_term}
	Say we have a topos $\call{E}$ admitting all countably infinite colimits, and a morphism $f: W \lto U$ in $\call{E}$. By Definition \ref{def:internal_logic_pure} the morphism $f$ corresponds to a function symbol in the internal logic. Hence, given a variable $w:W$ of type $W$, we can construct the term $fw:U$. Naturally, one wishes to construct a term such as ``$fw \in U$" but we must notice that this is an invalid construction! We are only able to construct a term ``$fw \in s$" when $s$ is a term of type $\call{P}U$, hence ``$fw \in U$" is meaningless. However, in the topos $\underline{\operatorname{Set}}$ of sets, there is a natural bijection between an arbitrary set $U$ and the set $\lbrace z \in \call{P}U \mid \exists u \in U, z = \lbrace u \rbrace \rbrace$. This generalises immediately; if $U$ is an arbitrary object of a topos $\call{E}$ then the following term may be constructed $\lbrace z: \call{P}U \mid \exists u \in U, z = \lbrace u \rbrace \rbrace$. We prove in Corollary \ref{cor:embedded_subobject} that there is an isomorphism between an arbitrary object $U$ in $\call{E}$ and the domain of the morphism $\llbracket z: \call{P}U. \exists u:U, z = \lbrace u \rbrace \rrbracket$.  Hence, in lieu of being able to write $fa \in U$ we may write $\lbrace fa\rbrace \in \lbrace z: \call{P}U \mid \exists u : U, z = \lbrace u \rbrace \rbrace$.
	
	\begin{lemma}\label{lem:image_interpretation}
		Let $f: A \lto B$ be a morphism. Consider the following.
		\begin{equation}
			\llbracket b:B. \exists a \in A, fa = b\rrbracket\rightarrowtail B
		\end{equation}
		There exists a unique isomorphism $\psi:  \operatorname{im}f \lto\llbracket b:B. \exists a \in A, fa = b \rrbracket$, where $\operatorname{im}f$ is the image of $f$ (Definition \ref{def:image}) so that the following diagram commutes.
		\begin{equation}
			\begin{tikzcd}
				\operatorname{im}f\arrow[dr,rightarrowtail]\arrow[r,"{\psi}"] & \llbracket b:B. \exists a \in A, fa = b\rrbracket\arrow[d,rightarrowtail]\\
				& B
			\end{tikzcd}
		\end{equation}
	\end{lemma}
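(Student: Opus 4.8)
The plan is to unwind $\llbracket b:B.\ \exists a:A,\ fa = b\rrbracket$ directly from Definition \ref{def:interpretation} and recognise it as the image of $f$. First I would set $\Delta = (b:B)$ and form the extended context $\Delta, a:A = (b:B, a:A)$, so that $\overline{\Delta, a:A} = B \times A$. By the clause for equality, $\llbracket (b:B, a:A).\ fa = b\rrbracket$ is the equaliser of the two morphisms $B \times A \lto B$ given by $\llbracket (b:B, a:A).\ fa\rrbracket = f\circ\pi_A$ and $\llbracket (b:B, a:A).\ b\rrbracket = \pi_B$. By the clause for $\exists$, the subobject $\llbracket b:B.\ \exists a:A,\ fa = b\rrbracket$ is then the image (Definition \ref{def:image}) of the composite $E \rightarrowtail B\times A \xrightarrow{\pi_B} B$, where $E := \llbracket (b:B, a:A).\ fa = b\rrbracket$ is that equaliser and $\pi_B$ plays the role of the projection $\pi$ of Definition \ref{def:interpretation}.

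The key step is to identify $E$ with $A$. I would show that the morphism $\langle f, \operatorname{id}_A\rangle : A \lto B\times A$ (informally $a \mapsto (fa,a)$) \emph{is} an equaliser of $\pi_B$ and $f\pi_A$. It equalises them since $\pi_B\langle f,\operatorname{id}_A\rangle = f = f\pi_A\langle f,\operatorname{id}_A\rangle$. Given any $g : Z \lto B\times A$ with $\pi_B g = f\pi_A g$, the morphism $h := \pi_A g$ satisfies $\langle f,\operatorname{id}_A\rangle h = \langle f\pi_A g, \pi_A g\rangle = \langle \pi_B g, \pi_A g\rangle = g$, using the equalising hypothesis; and $h$ is forced by $h = \pi_A\langle f,\operatorname{id}_A\rangle h = \pi_A g$, giving the required unique factorisation. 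Hence there is a unique isomorphism $E \cong A$ compatible with the inclusions into $B\times A$, and under it the composite $E \xrightarrow{\pi_B} B$ corresponds to $\pi_B\langle f,\operatorname{id}_A\rangle = f$.

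From here the conclusion is immediate. Since images are unchanged under precomposition with an isomorphism (isomorphisms are epic, and the epi--mono factorisation is unique up to isomorphism), the image of $E \rightarrowtail B\times A \xrightarrow{\pi_B} B$ equals the image of $f$, that is, $\operatorname{im} f$. Thus $\llbracket b:B.\ \exists a:A,\ fa = b\rrbracket$ and $\operatorname{im} f$ are the same subobject of $B$. Existence of $\psi$ is then the resulting isomorphism of these equal subobjects, and uniqueness is automatic: writing $m : \operatorname{im} f \rightarrowtail B$ and $n : \llbracket b:B.\ \exists a:A,\ fa=b\rrbracket \rightarrowtail B$ for the two monics into $B$, any $\psi$ with $n\psi = m$ is unique because $n$ is monic.

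I expect the main obstacle to be purely bookkeeping: carefully matching the image-based interpretation of $\exists$ in Definition \ref{def:interpretation} with the standard epi--mono image, and keeping track of which projection realises the map $\pi$ of that definition. Once the equaliser $E$ is recognised as the graph of $f$ and hence isomorphic to $A$ over $B$, the rest follows formally from the uniqueness of images.
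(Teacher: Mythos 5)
Your proof is correct, and at the crucial step it takes a genuinely different route from the paper's. Both arguments open identically: by Definition \ref{def:interpretation}, the subobject $\llbracket b:B.\ \exists a:A,\ fa = b\rrbracket$ is the image of the composite of the equaliser $e \colon \llbracket (b:B,a:A).\ fa=b\rrbracket \rightarrowtail B\times A$ with the projection onto $B$. The paper then passes through the adjunction $\exists_{\pi_B}\dashv \pi_B^{-1}$ and reduces the problem to exhibiting an isomorphism between $e$ and $\pi_B^{-1}(\operatorname{im}f)$ in $\operatorname{Sub}(B \times A)$, whereas you identify $e$ outright with the graph $\langle f,\operatorname{id}_A\rangle : A \rightarrowtail B\times A$ via the universal property of the equaliser, so that $\pi_B e$ differs from $f$ only by precomposition with an isomorphism, and the two images coincide. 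Your route is more elementary, and the difference is more than stylistic: the paper's intermediate claim is too strong. In $\underline{\operatorname{Set}}$, taking $f$ injective but not surjective with $A$ having at least two elements, the equaliser is the graph of $f$, which is a \emph{proper} subobject of $\pi_B^{-1}(\operatorname{im}f)=A\times \operatorname{im}f$, so the asserted isomorphism cannot exist; since the subobject categories are preorders, the adjunction only translates the inequality $e \leq \pi_B^{-1}(\operatorname{im}f)$ into $\operatorname{im}(\pi_B e)\leq \operatorname{im}f$, and the reverse inequality $\operatorname{im}f \leq \operatorname{im}(\pi_B e)$ requires precisely the kind of argument you give. Your graph identification supplies that content, and your one-line uniqueness argument (the monomorphism into $B$ is left-cancellable) settles a point the paper's proof does not explicitly address. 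The only step worth tightening is the invariance of images under precomposition by an isomorphism $\phi$: rather than appealing to epi--mono factorisation, justify it directly against the paper's pushout-then-equaliser Definition \ref{def:image}, which is immediate because $\phi$ is epic, so $f$ and $f\phi$ have the same cocones, hence the same pushout coprojections, hence the same equaliser.
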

	\begin{proof}
		First we identify the morphism $\llbracket b:B. \exists a:A, fa = b\rrbracket$. The morphism $\pi_B: A \times B \lto B$ induces an adjunction $\exists_{\pi_B} \dashv\pi_B^{-1}$, and by Definition \ref{def:internal_logic_pure} we have
		\begin{equation}
			\llbracket b:B. \exists a:A, fa = b\rrbracket = \exists_{\pi_B} \big(\llbracket a:A, b:B. fa = b\rrbracket\big)
		\end{equation}
		The morphism $\llbracket a:A, b:B. fa = b\rrbracket$ is the Equaliser in the following Diagram.
		\begin{equation}
			\begin{tikzcd}
				\operatorname{Equal}(f\pi_A,\pi_B)\arrow[r,"{e}"] & A \times B\arrow[r,shift left,"{f\pi_A}"]\arrow[r,shift right,swap,"{\pi_B}"] & B
			\end{tikzcd}
		\end{equation}
		Hence, $\llbracket b:B. \exists a:A, fa = b\rrbracket$ is equal to the image $\operatorname{im}(\pi_Be)$ of $\pi_Be$. Our task is to prove that there exists an isomorphism $\operatorname{im}(f) \stackrel{\sim}{\lto} \operatorname{im}(\pi_Be)$. By the adjunction $\exists_{\pi_B} \dashv \pi_B^{-1}$, we have in particular the following natural bijection.
		\begin{equation}
			\operatorname{Hom}_{\operatorname{Sub}(B)}(\operatorname{im}\pi_B e, \operatorname{im}f) \cong \operatorname{Hom}_{\operatorname{Sub}(A \times B)}(e, \pi_B^{-1}(\operatorname{im}f))
		\end{equation}
		Hence it suffices to show the existence of an isomorphism $e \stackrel{\sim}{\lto} \pi_B^{-1}(\operatorname{im}(f))$. Denote the morphism $\operatorname{im}f \lto B$ by $i_1$ and the morphism $\pi_B^{-1}(\operatorname{im}f) \lto A \times B$ by $i_2$. We consider the following commutative diagram.
		\begin{equation}\label{eq:image_unwinding}
			\begin{tikzcd}
				& \pi_B^{-1}(\operatorname{im}f)\arrow[r,"{j}"]\arrow[d,swap,"{i_2}"] & \operatorname{im}f\arrow[d,"{i_1}"]\\
				\operatorname{Equal}(f\pi_A,\pi_B)\arrow[r,"e"] & A \times B\arrow[r,"{\pi_B}"] & B
			\end{tikzcd}
		\end{equation}
		Where the square on the right of Diagram \eqref{eq:image_unwinding} is a pullback Diagram. We will use the fact that this is a pullback Daigram to induce a morphism $\operatorname{Equal}(f\pi_A,\pi_B) \lto \pi_B^{-1}(\operatorname{im}f)$. To do this, we need to describe a morphism $d: \operatorname{Equal}(f\pi_A,\pi_B) \lto \operatorname{im}f$ so that $i_1 d = \pi_B e$.
		
		The morphism $\pi_B e$ is equal to the morphism $f\pi_Ae$, and $f$ in turn factors through $\operatorname{im}f$. Hence, there exists a morphism $\operatorname{Equal}(f\pi_A,\pi_B) \lto \operatorname{im}f \lto B$ which commutes with $\pi_B e$.
		
		Let $\gamma: \operatorname{Equal}(f\pi_A,\pi_B) \lto \pi_B^{-1}(\operatorname{im}f)$ denote the morphism induced by the universal property of the pullback.
		
		Next we define a morphism $\pi_B^{-1}(\operatorname{im}f) \lto \operatorname{Equal}(f\pi_A,\pi_B)$. We will use the universal property of the Equaliser.
		
		We simply need to show that $\pi_Bi_2 = f\pi_Ai_2$.  This follows from the following simple calculation.
		\begin{align*}
			\pi_B i_2 &= i_1 j\\
			&= f\pi_Ai_2
		\end{align*}
		Hence there exists a morphism $\delta: \pi_B^{-1}(\operatorname{im}f) \lto \operatorname{Equal}(f\pi_A,\pi_B)$.
		
		It remains to show that $\gamma \delta = \operatorname{id}$ and $\delta \gamma = \operatorname{id}$. This follows from the uniqueness of the induced morphisms coming from the universal properties of $\operatorname{Equal}(f\pi_A,\pi_B)$ and $\pi_B^{-1}(\operatorname{im}f)$.
	\end{proof}
	\begin{corollary}\label{cor:embedded_subobject}
		Let $u:U$ be a variable. Then there is a unique isomorphism $U \lto \llbracket z: \call{P}U. \exists u:U, z = \lbrace u \rbrace \rrbracket$ such that the following Diagram commutes.
		\begin{equation}
			\begin{tikzcd}
				U\arrow[r,"{\llbracket u:U. \lbrace u \rbrace \rrbracket}"]\arrow[dr] & \Omega^U\\
				& \llbracket z: \call{P}U . \exists u :U, z = \lbrace u \rbrace \rrbracket\arrow[u,rightarrowtail]
			\end{tikzcd}
		\end{equation}
	\end{corollary}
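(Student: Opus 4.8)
The plan is to recognise this Corollary as the special case of Lemma \ref{lem:image_interpretation} obtained by taking $f$ to be the singleton morphism, combined with the observation that the singleton morphism is a monomorphism, so that it coincides with its own image. Concretely, I would set $f := \llbracket u:U. \lbrace u \rbrace \rrbracket : U \lto \Omega^U$. Lemma \ref{lem:image_interpretation} then supplies a unique isomorphism $\psi: \operatorname{im}f \lto \llbracket z: \call{P}U. \exists u:U, \lbrace u \rbrace = z\rrbracket$ compatible with the inclusions into $\Omega^U$. The subobject appearing here differs from the one in the statement only in that the equation reads $\lbrace u \rbrace = z$ rather than $z = \lbrace u \rbrace$; since the interpretation of an equality is an equaliser, which is symmetric in its two morphisms, these two subobjects of $\Omega^U$ are literally the same, so no work is needed to reconcile them.

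It then remains to identify $\operatorname{im}f$ with $U$, and the key step — which I expect to be the main obstacle — is to show that the singleton morphism $f$ is monic. I would establish this internally: using the derived rules of Proposition \ref{prop:derived_rules} together with Lemma \ref{lem:variablesub}, one expands $\lbrace u \rbrace = \lbrace u' \rbrace$ into $\forall x:U, (x = u \Leftrightarrow x = u')$, instantiates $x := u$, and uses $\vdash u = u$ to derive
\[
\lbrace u \rbrace = \lbrace u' \rbrace \vdash_{u:U,u':U} u = u'.
\]
Interpreting this sequent gives $\llbracket u,u':U. \lbrace u\rbrace = \lbrace u'\rbrace\rrbracket \leq \llbracket u,u':U. u = u'\rrbracket$; as the reverse inequality always holds by Leibniz, the kernel pair of $f$ coincides with the diagonal, which is exactly the statement that $f$ is a monomorphism. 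A monomorphism factors through its image via an isomorphism (Definition \ref{def:image}), so the canonical comparison map $U \lto \operatorname{im}f$ is invertible.

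Composing
\[
U \stackrel{\sim}{\lto} \operatorname{im}f \stackrel{\psi}{\lto} \llbracket z: \call{P}U. \exists u:U, z = \lbrace u \rbrace\rrbracket
\]
then yields the required isomorphism, and the triangle in the statement commutes because each factor commutes with the relevant inclusion into $\Omega^U$ while the top edge of the triangle is $f$ itself. Uniqueness is immediate from the commuting triangle: the vertical map is monic, so any morphism $U \lto \llbracket z: \call{P}U. \exists u:U, z = \lbrace u\rbrace\rrbracket$ making the triangle commute is determined by its composite with that mono, which is forced to be $f$; hence at most one such morphism exists, and we have exhibited one which is an isomorphism.
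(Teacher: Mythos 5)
Your proposal is correct, and its backbone is the same as the paper's: apply Lemma \ref{lem:image_interpretation} to the singleton morphism $f = \llbracket u:U. \lbrace u \rbrace \rrbracket$, after identifying $\llbracket z: \call{P}U. \exists u:U, z = \lbrace u \rbrace \rrbracket$ with $\llbracket z: \call{P}U. \exists u:U, z = fu \rrbracket$. The paper's proof consists of exactly this reduction and stops there. What you do differently is supply the step the paper leaves implicit: Lemma \ref{lem:image_interpretation} only produces an isomorphism out of $\operatorname{im}f$, so to obtain one out of $U$ (with the stated triangle, whose top edge is $f$) one must know that the canonical factorisation $U \lto \operatorname{im}f$ is invertible, i.e.\ that the singleton map is monic. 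Your internal derivation of $\lbrace u \rbrace = \lbrace u' \rbrace \vdash_{u:U,u':U} u = u'$, read as the containment of the kernel pair of $f$ in the diagonal, is a sound way to get this (and in a topos every mono is the equaliser of its cokernel pair, so $U \cong \operatorname{im}f$ as you claim); your handling of the harmless asymmetry $fu = z$ versus $z = fu$ via symmetry of equalisers, and your uniqueness argument via monicity of $\llbracket z: \call{P}U. \exists u:U, z = \lbrace u \rbrace \rrbracket \rightarrowtail \Omega^U$, are likewise correct and are implicit in the paper. One wrinkle: rule \eqref{rule:set} of Proposition \ref{prop:derived_rules}, as stated, applies only to sequents of the form $\vdash_\Delta \lbrace a:A \mid p \rbrace = \lbrace a:A \mid q \rbrace$, i.e.\ with $\top$ on the left, so it cannot literally be applied to a set-equality standing as a hypothesis; a fully formal version of your monicity derivation should instead go through equality elimination \eqref{rule:equality} together with the membership rules \eqref{rule:element}, which yields the same sequent. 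In short: same key lemma, but your write-up makes explicit, and proves, the monicity of the singleton map, which is the one point the paper's one-line proof takes for granted.
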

	\begin{proof}
		The interpretation of $\llbracket u:U. \lbrace u \rbrace \rrbracket$ is some morphism $f$. We have the following equality.
		\begin{equation}
			\llbracket z: \call{P}U. \exists u:U, z = \lbrace u \rbrace \rrbracket = \llbracket z : \call{P}U. \exists u : U, z = fu\rrbracket
		\end{equation}
		Hence the claim follows from Lemma \ref{lem:image_interpretation}.
	\end{proof}
	\subsection{Crucial terms}\label{sec:crucial_terms}
	As mentioned in the Introduction, the union in the internal logic is an \emph{intensional} operator, in that only the union of subobjects of a common parent object can be taken. This is in contrast to ZF set theory where the union of two arbitrary sets may be taken. We now show how to describe the union of two subobjects of the same object via the internal logic. The terms defined here will be used extensively in Section \ref{sec:arbitrary_topos}.
	\begin{definition}\label{def:crucial_terms}
		We define:
		\begin{itemize}
			\item Given two variables $Z_1,Z_2$ both of type $\call{P}U$ we form
			\begin{align}
%				\begin{split}
					Z_1 \cup_U Z_2 &:= \lbrace u:U \mid u \in Z_1 \vee u \in Z_2\rbrace : \call{P}U\\
					\operatorname{FV}(Z_1 \cup_U Z_2) &= \lbrace Z_1 : \call{P}U, Z_2 : \call{P}U\rbrace 
%					\end{split}
			\end{align}
			\item Given a function symbol $f: A \lto B$ and a variable $Z : \call{P}A$ we define
			\begin{align}
%				\begin{split}
					f(Z) &:= \lbrace b : B \mid \exists a : A, a \in Z \wedge b = fa\rbrace: \call{P}B\\
					 \operatorname{FV}(f(Z)) &= \lbrace Z: \call{P}A\rbrace
%					\end{split}
			\end{align}
		\end{itemize}
	\end{definition}
	What is the interpretation of these terms inside a topos? We build this piece by piece. Recall that the morphism $\in_U$ is given by first considering the morphism $\adj{\operatorname{id}_{\Omega^U}}: U \times \Omega^U \lto \Omega$ (see Notation \ref{not:adj}) of the identity morphism $\operatorname{id}_U: \Omega^U \lto \Omega^U$. This is a morphism into the subobject classifier and so corresponds to a subobject of $U \times \Omega^U$. We denote this subobject $\in_U$.
	\begin{example}\label{ex:union}
		We let $\Delta$ be the context $u:U, Z_1:\Omega^U, Z_2: \Omega^U$. We construct $\llbracket Z_1:\call{P}U, Z_2:\call{P}U. \lbrace u : U \mid u \in Z_1 \vee u \in Z_2 \rbrace \rrbracket : \Omega^U \times \Omega^U \lto \Omega^U$.
		\begin{enumerate}
			\item The subobject $\llbracket \Delta . u \in Z_1 \rrbracket$ is given by first considering the pullback of the following Diagram, we write $\pi_{(12)}$ for the morphism $U \times \Omega^U \times \Omega^U \lto U \times \Omega^U$, the projection onto the first two components.
			\begin{equation}
				\begin{tikzcd}[column sep = huge]
					& & \in_U\arrow[d, rightarrowtail]\\
					U \times \Omega^U \times \Omega^U\arrow[rr,"{(\llbracket \Delta. u\rrbracket\pi_{(12)}, \llbracket \Delta. Z_1 \rrbracket\pi_{(12)})}"] & & U \times \Omega^U
				\end{tikzcd}
			\end{equation}
			This subobject $\llbracket \Delta. u \in Z_1\rrbracket$ can be thought of as all pairs $(u, U)$ where $u$ ``is an element" of $U$.
			\item The construction of $\llbracket \Delta. u \in Z_2\rrbracket$ is similar, but the morphism $\pi_{(13)}$, the projecto onto the first and third components, is used in place of $\pi_{(12)}$.
			\item The subobject $\llbracket \Delta. u \in Z_1 \vee u \in Z_2\rrbracket$ is given by the coproduct of $\llbracket \Delta. u \in Z_1 \rrbracket$ and $\llbracket \Delta. u \in Z_2 \rrbracket$ in the category $\operatorname{Sub}(U \times \Omega^U \times \Omega^U)$. This subobject can be thought of as all triples $(u,U_1,U_2)$ where $u \in U_1 \cup U_2$.
			\item Lastly, the morphism $\llbracket Z_1:\call{P}U, Z_2:\call{P}U. \lbrace u:U \mid u \in Z_1 \vee u \in Z_2 \rbrace \rrbracket$ is the transpose of the morphism $U \times \Omega^U \times \Omega^U \lto \Omega$ with codomain $\Omega$ which in turn corresponding to the subobject $\llbracket \Delta. u \in Z_1 \vee u \in Z_2 \rrbracket$. Hence
			\begin{equation}
				\llbracket Z_1:\call{P}U, Z_2:\call{P}U. \lbrace u : U \mid u \in Z_1 \vee u \in Z_2 \rbrace \rrbracket : \Omega^U \times \Omega^U \lto \Omega^U
			\end{equation}
			This morphism can be thought of as a map which given two ``set" returns their union.
		\end{enumerate}
	\end{example}
	\begin{example}
		Let $\Delta$ be the context $a:A, b:B, Z:\call{P}A$. We construct $ \llbracket Z:\call{P}A . \lbrace b:B \mid \exists a:A, a \in Z \wedge b = fa \rbrace\rrbracket$.
		\begin{enumerate}
			\item We have already shown in Example \ref{ex:union} how to construct $\llbracket \Delta. a \in Z\rrbracket$.
			\item The subobject $\llbracket \Delta. b = fa\rrbracket$ is given by the following equaliser.
			\begin{equation}
				\begin{tikzcd}
					& & A\arrow[dr,"{f}"]\\
					\operatorname{Equal}(f\pi_A, \pi_B)\arrow[r] & A \times B \times \Omega^A\arrow[ur,"{\pi_A}"]\arrow[rr,swap,"{\pi_B}"] & & B
				\end{tikzcd}
			\end{equation}
			This subobject can be thought of as pairs $(a,f(a))$.
			\item We then take the intersection in the category $\operatorname{Sub}(A \times B \times \Omega^A)$ of these two subobjects to obtain $\llbracket \Delta. a \in Z \wedge b = fa\rrbracket$. This can be thought of as the set of triples $(a,f(a),U)$ where $a \in U$.
			\item The subobject $\llbracket a:A, U: \call{P}A. \exists a:A, a \in Z \wedge b = fa\rrbracket$ is the image of this subobject given the functor $\exists_{\pi_{B \times \Omega^A}}$. This can be thought of as the set of pairs $(f(a),U)$ where $a$ is some element of $U$.
			\item Lastly, we consider the corresponding morphism $B \times \Omega^A \lto \Omega$ with domain $\Omega$ and then take the transpose $\Omega^A \lto \Omega^B$. This can be thought of as a function which maps a subset $U$ of $A$ to the set $f(U)$. Hence why we denote this term $f(Z)$.
		\end{enumerate}
	\end{example}
	
	\section{Finite colimits in an arbitrary topos}\label{sec:arbitrary_topos}
	We lift the ideas of Section \ref{sec:in_sets} to the setting of an arbitrary topos admitting countably infinite colimits.
	
	We begin with the description of an initial object. Then we show how finite coproducts and coequalisers are described. This is then put together in Section \ref{sec:colimits} where we show how to describe finite colimits in an arbitrary topos $\call{E}$ admitting countably infinite colimits using the internal logic.
	
	\subsection{Initial object}\label{sec:initial_object}
	In the topos $\underline{\operatorname{Sets}}$ an initial object is given by the empty set. To translate that this to the internal logic of a topos we consider the formula $\bot$ and then interpret this with respect to the context $\ast: \mathbbm{1}$.
	\begin{equation}
		\begin{tikzcd}
			\mathbbm{1}\\
			\llbracket \ast: \mathbbm{1} . \bot \rrbracket\arrow[u,rightarrowtail]
		\end{tikzcd}
	\end{equation}
	Now, given an arbitrary object $U \in \call{E}$ we claim there exists a unique morphism $\llbracket \ast: \mathbbm{1} . \bot \rrbracket \lto U$. We notice that since $\mathbbm{1}$ is interpreted as a \emph{terminal} object of $\call{E}$, there is no reason to expect there to be a morphism $\mathbbm{1} \lto U$ for arbitrary $U$. However, there is a canonical morphism $\mathbbm{1} \lto \Omega^U$ which is given by $\llbracket . \varnothing_U\rrbracket$. Combining this with the fact that $U$ can be exhibited as a subobject of $\Omega^U$ via the term $\big\lbrace z : \call{P}U \mid z = \lbrace u \rbrace \big\rbrace$ (Corollary \ref{cor:embedded_subobject}) we will be able to arrive at the required morphism $\llbracket \ast: \mathbbm{1} . \bot \rrbracket \lto U$.
	
	Consider the following Diagram.
	\begin{equation}\label{eq:initial_pullback}
		\begin{tikzcd}
			\mathbbm{1}\arrow[r,"{\llbracket . \varnothing_U \rrbracket}"] & \Omega^U\\
			\llbracket \ast: \mathbbm{1} . \bot \rrbracket\arrow[u,rightarrowtail] & \llbracket z: \call{P}U . \exists u:U, z = \lbrace u \rbrace \rrbracket\arrow[u,rightarrowtail]
		\end{tikzcd}
	\end{equation}
	To show that there exists a morphism $\llbracket \ast: \mathbbm{1} . \bot \rrbracket \lto \llbracket z: \call{P}U . \exists u:U, z = \lbrace u \rbrace \rrbracket$ it suffices by Lemma \ref{lem:sub_prop} to derive the following sequent in the internal logic.
	\begin{equation}\label{eq:entailment}
		\bot \vdash \varnothing_U \in \lbrace z: \call{P}U . \exists u:U, z = \lbrace u \rbrace \rbrace
	\end{equation}
	This is immediate though via \ref{rule:true_false} of Definition \ref{def:type_theory}.
	
	In more detail, Lemma \ref{lem:sub_prop} implies that \eqref{eq:initial_pullback} but with the left most vertical arrow replaced by $\llbracket \ast:\mathbbm{1}. \lbrace z: \call{P}U . \exists u:U, z = \lbrace u \rbrace \rbrace\rrbracket$ is a pullback square. Then, since \eqref{eq:entailment} is derivable in the type theory, the subobject $\llbracket \ast:\mathbbm{1}. \bot\rrbracket$ factors through $\llbracket \ast:\mathbbm{1}. \lbrace z: \call{P}U . \exists u:U, z = \lbrace u \rbrace \rbrace\rrbracket$.
	\begin{proposition}\label{prop:initial}
		The domain of any representative of the subobject $\llbracket \ast: \mathbbm{1} . \bot \rrbracket$ is an initial object of $\call{E}$.
	\end{proposition}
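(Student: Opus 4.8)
The plan is to verify the two halves of the universal property of an initial object for the object $I := \llbracket \ast : \mathbbm{1} . \bot \rrbracket$ (the domain of the displayed subobject): that for every object $U \in \call{E}$ there is \emph{at least} one and \emph{at most} one morphism $I \lto U$. The one structural fact I start from is that, by Definition \ref{def:interpretation}, $\llbracket \ast : \mathbbm{1} . \bot \rrbracket$ is the \emph{bottom element} of the Heyting algebra $\operatorname{Sub}(\mathbbm{1})$, since the context $\ast : \mathbbm{1}$ interprets as $\overline{(\ast : \mathbbm{1})} = \mathbbm{1}$. Concretely this says that $I \rightarrowtail \mathbbm{1}$ is a monomorphism and that $I \leq X$ in $\operatorname{Sub}(\mathbbm{1})$ for every subobject $X \rightarrowtail \mathbbm{1}$; equivalently, for each such $X$ there is a morphism $I \lto X$ commuting with the inclusions into $\mathbbm{1}$.

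\emph{Existence.} This half is precisely what the discussion preceding the Proposition establishes. For an arbitrary $U$, Corollary \ref{cor:embedded_subobject} exhibits $U$ as the subobject $\llbracket z : \call{P}U . \exists u : U, z = \lbrace u \rbrace \rrbracket \rightarrowtail \Omega^U$, and pulling this subobject back along $\llbracket . \varnothing_U \rrbracket : \mathbbm{1} \lto \Omega^U$ yields, by Lemma \ref{lem:sub_prop}, the subobject $\llbracket \ast : \mathbbm{1} . \varnothing_U \in \lbrace z : \call{P}U \mid \exists u : U, z = \lbrace u \rbrace \rbrace \rrbracket \rightarrowtail \mathbbm{1}$. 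The derivable sequent \eqref{eq:entailment} says exactly that $I$ factors through this pulled-back subobject, and composing that factorisation with the pullback projection and the isomorphism of Corollary \ref{cor:embedded_subobject} produces a morphism $\iota_U : I \lto U$.

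\emph{Uniqueness.} Write $m_I : I \rightarrowtail \mathbbm{1}$ for the mono, suppose $f, g : I \lto U$, and let $e : E \rightarrowtail I$ be their equaliser. Then $m_I \circ e : E \rightarrowtail \mathbbm{1}$ is monic, so $E$ is a subobject of $\mathbbm{1}$. Since $I$ is the bottom of $\operatorname{Sub}(\mathbbm{1})$, we have $I \leq E$, i.e. there is $\phi : I \lto E$ with $(m_I \circ e) \circ \phi = m_I$. Monicity of $m_I$ then forces $e \circ \phi = \operatorname{id}_I$, and monicity of $m_I \circ e$ forces $\phi \circ e = \operatorname{id}_E$, so $e$ is an isomorphism. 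Hence the equaliser of $f$ and $g$ is all of $I$, giving $f = g$; thus $\iota_U$ is the unique morphism $I \lto U$.

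Combining the two halves shows $I$ is an initial object of $\call{E}$, and since any other representative of the subobject $\llbracket \ast : \mathbbm{1} . \bot \rrbracket$ has domain canonically isomorphic to $I$, its domain is initial as well. I expect the step needing the most care to be the uniqueness argument: one must correctly leverage the bottom-element property to collapse the equaliser, being attentive that all comparison maps are taken \emph{over} $\mathbbm{1}$ and that it is monicity into $\mathbbm{1}$ which forces the two composites to be identities. The existence half is essentially the bookkeeping already carried out in the text surrounding Diagram \eqref{eq:initial_pullback}.
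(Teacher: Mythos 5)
Your proof is correct, and your existence half is essentially identical to the paper's: both simply package the discussion surrounding Diagram \eqref{eq:initial_pullback}, i.e.\ Lemma \ref{lem:sub_prop}, Corollary \ref{cor:embedded_subobject}, and the derivable sequent \eqref{eq:entailment}. Where you genuinely diverge is uniqueness. The paper disposes of uniqueness in one line: it asserts that it suffices to show $\llbracket . \varnothing_U\rrbracket : \mathbbm{1} \lto \Omega^U$ is a monomorphism, which is immediate since its domain is terminal; the force of that observation is that a morphism from $I$ (the domain of $\llbracket \ast : \mathbbm{1} . \bot\rrbracket$) to $U$ which is \emph{compatible with Diagram \eqref{eq:initial_pullback}} is unique, since factorisations of a fixed morphism through the monomorphism $U \rightarrowtail \Omega^U$ are unique. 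You instead prove the stronger statement that \emph{any} two morphisms $f,g : I \lto U$ agree, by forming their equaliser $e : E \rightarrowtail I$, noting that $E$ is thereby a subobject of $\mathbbm{1}$, and using the bottom-element property of $\llbracket \ast : \mathbbm{1} . \bot \rrbracket$ in $\operatorname{Sub}(\mathbbm{1})$ (which is exactly how Definition \ref{def:interpretation} interprets $\bot$) to produce $\phi : I \lto E$ over $\mathbbm{1}$ and conclude, by monicity of $m_I$ and of $m_I e$, that $e$ is an isomorphism. This is a real gain in rigour: initiality demands uniqueness among \emph{all} morphisms $I \lto U$, and an arbitrary such morphism carries no a priori compatibility with the maps into $\Omega^U$, so the paper's argument, read literally, leaves precisely the gap your equaliser argument closes. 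The trade-off is that your uniqueness step is purely categorical --- it leans on the Heyting-algebra structure of $\operatorname{Sub}(\mathbbm{1})$ and on finite limits of $\call{E}$, stepping outside the internal-language bookkeeping the paper is showcasing --- whereas the paper's one-liner stays inside its diagram at the cost of establishing less than the universal property requires.
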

	\begin{proof}
		We have already shown that for arbitrary $U \in \call{E}$ there exists a morphism $\llbracket \ast : \mathbbm{1} . \bot \rrbracket \lto U$, so it remains to show uniqueness of this morphism.
		
		To prove this, it suffices to show that $\llbracket . \varnothing_U\rrbracket: \mathbbm{1} \lto \Omega^U$ is a monomorphism. However this is immediate from the fact that the domain of this morphism is $\mathbbm{1}$, a terminal object of $\call{E}$.
	\end{proof}
	
	\subsection{Finite coproducts}\label{sec:coproducts}
	Throughout, the notation $\lbrace a \rbrace = \lbrace a' : A. a' = a\rbrace$, and $\varnothing_{a:A} = \lbrace a:A . \bot\rbrace$ will be used.
	
	Consider the following formula which has a free variable $z: \call{P}A \times \call{P}B$.
	\begin{equation}\label{eq:coprod_formula}
		\big(A \coprod B\big)(z) := \big(\exists a:A, z = \langle \lbrace a \rbrace, \varnothing_{b:B}\rangle \big) \vee \big(\exists b : B, z = \langle \varnothing_{a:A}, \lbrace b \rbrace\rangle\big)
	\end{equation}
	The interpretation of this is a subobject which fits into the following Diagram of solid arrows.
	\begin{equation}\label{eq:coproduct_diagram}\adjustbox{scale=0.75}{
		\begin{tikzcd}[column sep = huge]
			A\arrow[r,"{\llbracket a:A. \lbrace a \rbrace \rrbracket}"]\arrow[drr,swap,dashed,"{\iota_A}"] & \Omega^A\arrow[r,"{\llbracket z: \call{P}A . \langle z, \varnothing_{b:B} \rangle \rrbracket}"] & \Omega^A \times \Omega^B & \Omega^B\arrow[l,swap,"{\llbracket z: \call{P}B . \langle \varnothing_{a:A}, z\rangle \rrbracket}"] & B\arrow[l,swap,"{\llbracket b:B. \lbrace b \rbrace\rrbracket}"]\arrow[dll,dashed,"{\iota_B}"]\\
			& & \llbracket z: \call{P}A \times \call{P}B. \big(A \coprod B\big)(z)\rrbracket \arrow[u,rightarrowtail]
		\end{tikzcd}}
	\end{equation}
	We now wish to invoke Lemma \ref{lem:sub_prop} to infer the existence of morphisms $\iota_A: A \lto \llbracket z:\call{P}A \times \call{P}B. \big(A \coprod B\big)(z)\rrbracket$ and $\iota_B: B \lto \llbracket z:\call{P}A \times \call{P}B. \big(A \coprod B\big)(z)\rrbracket$ rendering Diagram \eqref{eq:coproduct_diagram} commutative. Using this Lemma, it suffices to prove the following sequents in the type theory.
	\begin{align}
		&\vdash_{a:A} \langle \lbrace a \rbrace, \varnothing_{b:B}\rangle \in \lbrace z: \call{P}A \times \call{P}B\mid \big(A \coprod B\big)(z)\rbrace,\label{eq:inclusion_A}\\
		&\vdash_{b:B} \langle \varnothing_{a:A}, \lbrace b \rbrace\rangle \in \lbrace z: \call{P}A \times \call{P}B\mid \big(A \coprod B\big)(z)\rbrace
	\end{align}
	We prove that Sequent \eqref{eq:inclusion_A} is derivable.
	
	Using deduction rule \eqref{rule:element} it suffices to prove the following.
	\begin{equation}
		\vdash_{a:A} \big(A \coprod B\big)(\langle \lbrace a \rbrace, \varnothing_{b:B}\rangle)
	\end{equation}
	To prove this, we observe the following proof tree.
	\begin{scprooftree}{0.8}
		\AxiomC{$\langle \lbrace a \rbrace , \varnothing_{b:B}\rangle = \langle \lbrace a \rbrace , \varnothing_{b:B}\rangle \vdash \langle \lbrace a \rbrace , \varnothing_{b:B}\rangle = \langle \lbrace a \rbrace , \varnothing_{b:B}\rangle$}
		\RightLabel{Lemma \ref{lem:disjunction_right}}
		\UnaryInfC{$\langle \lbrace a \rbrace , \varnothing_{b:B}\rangle = \langle \lbrace a \rbrace , \varnothing_{b:B}\rangle \vdash_{a:A,b:B} \langle \lbrace a \rbrace , \varnothing_{b:B}\rangle = \langle \lbrace a \rbrace , \varnothing_{b:B}\rangle \vee \langle \lbrace a \rbrace, \varnothing_{b:B}\rangle = \langle \varnothing_{a:A}, \lbrace b \rbrace\rangle$}
		\RightLabel{Lemma \ref{lem:witness}}
		\UnaryInfC{$\langle \lbrace a \rbrace , \varnothing_{b:B}\rangle = \langle \lbrace a \rbrace , \varnothing_{b:B}\rangle \vdash_{a:A,b:B} \exists z: \call{P}A \times \call{P}B, z = \langle \lbrace a \rbrace, \varnothing_{b:B}\rangle \vee z = \langle \varnothing_{a:A}, \lbrace b \rbrace \rangle$}
	\end{scprooftree}
	Let $\pi$ denote this proof tree just constructed.  We can then construct the following proof tree.
	\begin{center}
		\AxiomC{}
		\RightLabel{$\eqref{rule:equality}$}
		\UnaryInfC{$\vdash_{a:A,b:B}\langle \lbrace a \rbrace, \varnothing_{b:B}\rangle = \langle \lbrace a \rbrace, \varnothing_{b:B}\rangle$}
		\AxiomC{$\pi$}
		\noLine
		\UnaryInfC{$\vdots$}
		\RightLabel{$\eqref{rule:cut}$}
		\BinaryInfC{$\vdash_{a:A,b:B} \exists z: \call{P}A \times \call{P}B, z = \langle \lbrace a \rbrace, \varnothing_{b:B}\rangle \vee z = \langle \varnothing_{a:A}, \lbrace b \rbrace \rangle$}
		\DisplayProof
	\end{center}
	Hence the morphism $\iota_A$ exists. The existence of morphism $\iota_B$ is proved similarly.
	
	\begin{proposition}\label{prop:coproduct}
		The triple $(\llbracket z: \call{P}A \times \call{P}B . \big(A \coprod B\big)(z) \rrbracket, \iota_A, \iota_B)$ forms a coproduct of $A,B$.
	\end{proposition}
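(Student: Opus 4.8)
The plan is to verify the universal property of the coproduct directly: given any object $C \in \call{E}$ together with morphisms $g_A : A \lto C$ and $g_B : B \lto C$, I must produce a unique morphism $h : P \lto C$, where $P := \llbracket z : \call{P}A \times \call{P}B . \big(A \coprod B\big)(z)\rrbracket$, satisfying $h\iota_A = g_A$ and $h\iota_B = g_B$. Since every morphism of $\call{E}$ is a function symbol (Definition \ref{def:internal_logic_pure}), I can use $g_A$ and $g_B$ directly inside the internal logic, and I will build $h$ out of the crucial terms of Definition \ref{def:crucial_terms}. Concretely, using the image term $g(Z)$ and the union term $\cup_C$, I form the term of type $\call{P}C$ with single free variable $z : \call{P}A \times \call{P}B$,
\begin{equation*}
	m(z) := g_A(\operatorname{fst} z) \cup_C g_B(\operatorname{snd} z),
\end{equation*}
whose interpretation is a morphism $\Omega^A \times \Omega^B \lto \Omega^C$.

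The key step is to show that, when restricted to $P$, this morphism lands inside the embedded copy of $C$ furnished by Corollary \ref{cor:embedded_subobject}; concretely I would prove the sequent
\begin{equation*}
	\big(A \coprod B\big)(z) \vdash_{z : \call{P}A \times \call{P}B} \exists c : C, \, m(z) = \lbrace c \rbrace.
\end{equation*}
Since the left-hand formula is a disjunction, I would apply disjunction elimination \eqref{rule:disjunction} to split into the cases $\exists a : A, z = \langle \lbrace a \rbrace, \varnothing_{b:B}\rangle$ and $\exists b : B, z = \langle \varnothing_{a:A}, \lbrace b \rbrace\rangle$. In the first case, unwinding $g_A(Z)$, $\cup_C$, $\lbrace a \rbrace$ and $\varnothing$ yields $m(z) = \lbrace g_A a\rbrace$ (so that $c := g_A a$ is a witness, via Lemma \ref{lem:witness}), because $g_B(\varnothing_{b:B})$ reduces to $\varnothing_{c:C}$; the second case is symmetric. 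Combining this sequent with the substitution property (Lemma \ref{lem:sub_prop}) and the membership rules \eqref{rule:element} lets the subobject $P$ factor through $\llbracket w : \call{P}C . \exists c : C, w = \lbrace c \rbrace\rrbracket$, and postcomposing with the inverse of the isomorphism of Corollary \ref{cor:embedded_subobject} defines $h : P \lto C$. The two triangles then follow cheaply: to see $h\iota_A = g_A$ it suffices, by the construction of $h$ and Lemma \ref{lem:sub_prop}, to derive $\vdash_{a:A} m(\langle \lbrace a \rbrace, \varnothing_{b:B}\rangle) = \lbrace g_A a \rbrace$, which is exactly the first-case computation specialised along $z := \langle \lbrace a \rbrace, \varnothing_{b:B}\rangle$; the identity $h\iota_B = g_B$ is symmetric.

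For uniqueness I would argue that $\iota_A$ and $\iota_B$ are jointly epimorphic. The point is that the defining formula $\big(A \coprod B\big)(z)$ is precisely the disjunction whose interpretation is the join, in $\operatorname{Sub}(\Omega^A \times \Omega^B)$, of the images of $\iota_A$ and $\iota_B$, each disjunct being the interpretation of an existentially quantified equation and hence an image in the sense of Lemma \ref{lem:image_interpretation}. Thus $P = \operatorname{im}\iota_A \vee \operatorname{im}\iota_B$. Given two morphisms $h_1, h_2 : P \lto C$ agreeing with $g_A$ and $g_B$, I would form their equaliser $E \rightarrowtail P$; since $h_i \iota_A = g_A$ and $h_i\iota_B = g_B$, both $\iota_A$ and $\iota_B$ factor through $E$, whence $\operatorname{im}\iota_A \vee \operatorname{im}\iota_B \leq E$ in $\operatorname{Sub}(P)$. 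As this join is all of $P$, the equaliser $E$ is the whole of $P$ and so $h_1 = h_2$.

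I expect the main obstacle to be the factoring step, namely establishing that $m(z)$ names a singleton exactly on the subobject $P$: this requires careful bookkeeping of the interpretations of $\in$, $\lbrace - \rbrace$, $\varnothing$, $\cup_C$ and $g(-)$, together with the correct use of the existential witness and membership rules. Once the two disjuncts of $\big(A \coprod B\big)(z)$ have been identified with $\operatorname{im}\iota_A$ and $\operatorname{im}\iota_B$, the joint-epi argument for uniqueness is comparatively routine.
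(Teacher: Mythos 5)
Your proposal is correct, and its existence half coincides with the paper's own argument: both construct the mediating morphism by interpreting the term $g_A(\operatorname{fst}z)\cup_C g_B(\operatorname{snd}z)$, deriving internally that on $\big(A\coprod B\big)(z)$ this term names a singleton, and then factoring through the embedded copy of $C$ from Corollary \ref{cor:embedded_subobject} via Lemma \ref{lem:sub_prop} and rule \eqref{rule:element} (in fact you are more explicit than the paper in verifying the two triangles $h\iota_A = g_A$ and $h\iota_B = g_B$, which the paper leaves implicit in the commutativity of Diagram \eqref{eq:existence_simple_diagram}). Where you genuinely diverge is uniqueness. The paper stays inside the internal language: it treats a candidate $\psi$ as a function symbol, converts the external commutativity of the two triangles into derived sequents, and then derives $\big(A\coprod B\big)(z) \vdash_{z} \psi(z) = g_0(\operatorname{fst}z)\cup g_1(\operatorname{snd}z)$ (Sequent \eqref{eq:psi}), so that every candidate equals one fixed composite independent of $\psi$. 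You instead argue externally that $\iota_A,\iota_B$ are jointly epimorphic: each disjunct of $\big(A\coprod B\big)(z)$ is, by Lemma \ref{lem:image_interpretation}, the image of $j\iota_A$ respectively $j\iota_B$ (where $j$ is the inclusion of $P$ into $\Omega^A\times\Omega^B$), so their join is all of $P$, and an equaliser of two candidates contains both images, hence is all of $P$. This is sound, with one point you should make explicit: the join supplied by the interpretation of $\vee$ lives in $\operatorname{Sub}(\Omega^A\times\Omega^B)$, whereas your equaliser argument needs the corresponding join in $\operatorname{Sub}(P)$; the transfer is legitimate because for monic $j$ the functor $\exists_j$ is a join-preserving order-embedding, but it is a step, not a tautology. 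On the trade-off: your route buys a reusable, purely categorical uniqueness argument (image factorisation plus equalisers) that avoids reasoning about an arbitrary morphism inside the logic, while the paper's route keeps the entire proof in the internal language, in keeping with its overall programme, and pins down the unique morphism as the interpretation of an explicit term.
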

	\begin{proof}
		The proof will proceed with two arguments, the first establishes the existence of a morphism
		\begin{equation}
			\llbracket z : \call{P}A \times \call{P}B . (A \coprod B)(z)\rrbracket \lto U
		\end{equation}
		rendering the following Diagram commutative.
		\begin{equation}\label{eq:standard_coprod_diag}
			\begin{tikzcd}
				& U\\
				A\arrow[ur,"{g_0}"]\arrow[r,swap,"{\iota_A}"] & \llbracket Z : \call{P}A \times \call{P}B . (A \coprod B)(z)\rrbracket\arrow[u] & B\arrow[ul,swap,"g_1"]\arrow[l,"{\iota_B}"]
			\end{tikzcd}
		\end{equation}
		and the second argument establishes uniqueness of such a morphism.
		
		To show existence, we consider the following subobject
		\begin{equation}
			\begin{tikzcd}
				\Omega^U\\
				\llbracket z: \call{P}U. \exists u:U,  z = \lbrace u \rbrace \rrbracket\arrow[u,rightarrowtail]
			\end{tikzcd}
		\end{equation}
		which, by Corollary \ref{cor:embedded_subobject} is isomorphic to the object $U$. We then define the following terms, where $z: \call{P}A \times \call{P}B$ and $u: \call{P}U \times \call{P}U$ (see Definition \ref{def:crucial_terms}).
		\begin{equation}
			\langle g_0(\operatorname{fst}z),g_1(\operatorname{snd}z)\rangle\qquad  \operatorname{fst}u \cup_U \operatorname{snd}u 
		\end{equation}
		These respectively have interpretations which are morphisms $\Omega^A \times \Omega^B \lto \Omega^U \times \Omega^U$ and $\Omega^U \times \Omega^U \lto \Omega^U$. This data fits into the following Diagram of solid arrows.
		\begin{equation}\label{eq:existence_simple_diagram}\adjustbox{scale=0.75}{
			\begin{tikzcd}[column sep = huge]
				\Omega^A \times \Omega^B\arrow[rr, "{\llbracket z: \call{P}A \times \call{P}B.  \langle g_0(\operatorname{fst}z),g_1(\operatorname{snd}z)\rangle\rrbracket}"] & & \Omega^U \times \Omega^U\arrow[r,"{\llbracket u: \call{P}U \times \call{P}U . \operatorname{fst}u \cup_U \operatorname{snd}u \rrbracket}"] & \Omega^U\\
				\llbracket z: \call{P}A \times \call{P}B. \big(A \coprod B\big)(z) \rrbracket\arrow[u,rightarrowtail]\arrow[rrr,dashed] & & & \llbracket z:\call{P}U . \exists u:U, z = \lbrace u \rbrace \rrbracket\arrow[u,rightarrowtail]
			\end{tikzcd}}
		\end{equation}
		On the level of the type theory the following sequent is derivable.
		\begin{equation}
			\big(A \coprod B\big)(z) \vdash_{z:\call{P}A \times \call{P}B} g_0(\operatorname{fst}z) \cup_U g_1(\operatorname{snd}z) \in \lbrace v:\call{P}U \mid v = \lbrace u \rbrace \rbrace
		\end{equation}
		which by Lemma \ref{lem:sub_prop} induces a morphism
		\begin{equation}
			\llbracket z: \call{P}A \times \call{P}B. \big(A \coprod B\big)(z) \rrbracket \lto \llbracket z:\call{P}U . \exists u: U,z = \lbrace u \rbrace \rrbracket
		\end{equation}
		which renders Diagram \eqref{eq:existence_simple_diagram} (this time considering all arrows) commutative.
		
		Now we show uniqueness. Since $U \lto \Omega^U$ is monic, it suffices to show uniqueness of a morphism $\psi: \llbracket z: \call{P}A \times \call{P}B.(A \coprod B)(z) \rrbracket \lto \Omega^U$ rendering the following Diagram commutative.
		\begin{equation}\label{eq:sufficient_commutativity}
			\begin{tikzcd}
				& \Omega^U\\
				A\arrow[ur,"{\llbracket a:A. \lbrace g_0a \rbrace\rrbracket}"]\arrow[r,swap,"{\iota_A}"] & \llbracket z : \call{P}A \times \call{P}B . (A \coprod B)(z)\rrbracket\arrow[u,"{\psi}"] & B\arrow[ul,swap,"{\llbracket b:B. \lbrace g_1b \rbrace\rrbracket}"]\arrow[l,"{\iota_B}"]
			\end{tikzcd}
		\end{equation}
		Let $\psi$ be such a morphism. By commutativity of Diagram \eqref{eq:sufficient_commutativity}, the following are derived sequents (Definition \ref{def:sound_deduction_rules}).
		\begin{align*}
			\exists a:A, z = \langle \lbrace a \rbrace, \varnothing \rangle &\vdash_{z:\call{P}A \times \call{P}B}\psi(z) = \lbrace g_0 a \rbrace\\
			\exists b:B, z = \langle \varnothing, \lbrace b \rbrace \rangle &\vdash_{z: \call{P}A \times \call{P}B} \psi(z) = \lbrace g_1 b \rbrace
		\end{align*}
		We then derive the following sequents:
		\begin{align*}
			\exists a:A, z = \langle \lbrace a \rbrace, \varnothing \rangle &\vdash_{z: \call{P}A \times \call{P}B}\lbrace g_0a\rbrace = g_0(\operatorname{fst}z)\cup g_1(\operatorname{snd}z)\\
			\exists a:A, z = \langle \varnothing, \lbrace b \rbrace \rangle &
			\vdash_{z: \call{P}A \times \call{P}B}\lbrace g_1b\rbrace = g_0(\operatorname{fst}z)\cup g_1(\operatorname{snd}z)
		\end{align*}
		Hence we derive by \eqref{rule:disjunction} of \ref{def:type_theory}, the following sequent.
		\begin{equation}\label{eq:psi}
			\exists a:A, z = \langle \lbrace a \rbrace, \varnothing \rangle \vee \exists b:B, z = \langle \varnothing, \lbrace b \rbrace \rangle \vdash_{z: \call{P}A \times \call{P}B}\psi(z) = g_0(\operatorname{fst}z) \cup g_1(\operatorname{snd}z)
		\end{equation}
		Since Sequent \eqref{eq:psi} is derivable, we have that the following diagram is commutative.
		\begin{equation}\adjustbox{scale=0.8}{
			\begin{tikzcd}[column sep = huge]
				\Omega^A \times \Omega^B\arrow[rr, "{\llbracket z: \call{P}A \times \call{P}B.  \langle g_0(\operatorname{fst}z),g_1(\operatorname{snd}z)\rangle\rrbracket}"] & & \Omega^U \times \Omega^U\arrow[rr,"{\llbracket u: \call{P}U \times \call{P}U . \operatorname{fst}u \cup_U \operatorname{snd}u \rrbracket}"] & & \Omega^U\\
				\llbracket z: \call{P}A \times \call{P}B. \big(A \coprod B\big)(z) \rrbracket\arrow[u,rightarrowtail]\arrow[rrrru,swap,"{\psi}"]
			\end{tikzcd}}
		\end{equation}
		Since the top row and the vertical arrow are independent of $\psi$, this proves uniqueness.
	\end{proof}
	\begin{remark}
		The top row of Diagram \eqref{eq:existence_simple_diagram} is exactly the top row of the Diagram in the middle of page 27 of \cite{Mikkelson}. We note however that we were not aware of this paper when the Diagram \eqref{eq:existence_simple_diagram} was first constructed.
	\end{remark}
The above is \emph{not} sufficient in order to prove that an elementary topos admits finite coproducts, the reason why is because the formula \eqref{eq:coprod_formula} makes use of the conjunction $\wedge$ which according to Definition \ref{def:interpretation} is interpreted as the disjoint union in a subobject category. To ensure that this exists, we use the fact a priori that an elementary topos admits finite colimits.

Indeed, the point is to come up with a \emph{description} of these finite coproducts using the internal language. Hence, we cannot simply use the fact that we have a description of \emph{binary} coproducts in order to obtain for free a descritpion of \emph{arbitrary finite} coproducts.

The next definition gives an appropriate description, the fact that this is correct follows by making suitable changes to the case of binary coproducts.
	\begin{definition}
		Let $A_1,...,A_n$ be objects of a topos $\call{E}$ and consider them as types in the pure type theory \eqref{def:type_theory}. Consider the following formula which has a free variable $z: (\hdots (\call{P} A_1 \times \call{P} A_2) \times \hdots \times \call{P}A_n)$.
		\begin{align}
			\begin{split}
				\big( \hdots \big(A_1 \coprod A_2\big) &\coprod \hdots  \coprod A_n \big)(z) \\
			&:= \big( \exists a_1: A_1, z = \langle \hdots \langle \lbrace a \rbrace, \varnothing_{a_2: A_2}\rangle, \hdots \rangle , \varnothing_{a_n: A_n}\rangle\big)\\
			&\vee \hdots\\
			&\vee \big(\exists a_n: A_n, z = \langle \hdots \langle \varnothing_{a_1: A_1}, \varnothing_{a_2: A_2}\rangle, \hdots \rangle, \lbrace a_n\rbrace\rangle\big)
			\end{split}
			\end{align}
		Also, for each $i = 1,..., n$, we let $\iota_{a_i}$ denote the following term.
		\begin{equation}
			\iota_{a_i} := \langle \hdots \langle \varnothing_{a_1:A_1}, \hdots \rangle,  \lbrace a_i\rbrace \rangle, \hdots \rangle, \varnothing_{a_n: A_n}\rangle
			\end{equation}
	\end{definition}
\begin{proposition}
	The tuple $\big(\llbracket z: \prod_{i = 1}^n \call{P} A_i. (A_1 \coprod A_2) \coprod \hdots \coprod A_n \rrbracket, \lbrace \iota_{a_i}\rbrace_{i = 1}^n \big)$ is a coproduct of $A_1,...,A_n$.
	\end{proposition}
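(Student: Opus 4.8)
The plan is to follow the structure of the proof of Proposition \ref{prop:coproduct} essentially verbatim, replacing the two injections by $n$ of them and the binary union $\cup_U$ by an iterated union. First I would establish that each term $\iota_{a_i}$ determines a morphism $A_i \lto \llbracket z : \prod_{j=1}^n \call{P}A_j . (A_1 \coprod \hdots \coprod A_n)(z)\rrbracket$. By Lemma \ref{lem:sub_prop} it suffices to derive, for each $i$, the sequent
\[
\vdash_{a_i : A_i} \iota_{a_i} \in \lbrace z : \prod\nolimits_{j=1}^n \call{P}A_j \mid (A_1 \coprod \hdots \coprod A_n)(z)\rbrace,
\]
which by the membership rule \eqref{rule:element} reduces to deriving $\vdash_{a_i:A_i} (A_1 \coprod \hdots \coprod A_n)(\iota_{a_i})$. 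This last sequent is obtained exactly as in the binary case: reflexivity \eqref{rule:equality} gives $\iota_{a_i} = \iota_{a_i}$, Lemma \ref{lem:witness} supplies $a_i$ as a witness for the $i$-th existential disjunct, and Lemma \ref{lem:disjunction_right} together with \eqref{rule:disjunction} introduces the remaining disjuncts.

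For the universal property, fix a cocone $g_1 : A_1 \lto U, \hdots, g_n : A_n \lto U$. As in Proposition \ref{prop:coproduct}, I would replace $U$ by the isomorphic subobject $\llbracket z : \call{P}U . \exists u:U, z = \lbrace u \rbrace\rrbracket \rightarrowtail \Omega^U$ supplied by Corollary \ref{cor:embedded_subobject}, and then use the terms of Definition \ref{def:crucial_terms} to build the iterated union term. Writing $z_j : \call{P}A_j$ for the $j$-th coordinate of $z$ (a composite of $\operatorname{fst}$ and $\operatorname{snd}$), the term $g_1(z_1) \cup_U g_2(z_2) \cup_U \hdots \cup_U g_n(z_n) : \call{P}U$ has interpretation a morphism $\prod_{j=1}^n \Omega^{A_j} \lto \Omega^U$. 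To descend this to the subobject via Lemma \ref{lem:sub_prop}, I would derive
\[
(A_1 \coprod \hdots \coprod A_n)(z) \vdash_{z} g_1(z_1) \cup_U \hdots \cup_U g_n(z_n) \in \lbrace v : \call{P}U \mid \exists u:U, v = \lbrace u \rbrace\rbrace.
\]
The derivation proceeds by disjunction elimination \eqref{rule:disjunction}: on the $i$-th disjunct $z = \iota_{a_i}$ every coordinate $z_j$ with $j \neq i$ is $\varnothing_{a_j:A_j}$, so $g_j(z_j) = \varnothing$ and the iterated union collapses to $g_i(\lbrace a_i\rbrace) = \lbrace g_i a_i\rbrace$, a singleton. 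This yields the mediating morphism and its commutativity with each $\iota_{a_i}$.

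Uniqueness would be argued as in the binary case. Since $U \rightarrowtail \Omega^U$ is monic, it suffices to pin down a morphism $\psi$ into $\Omega^U$. Commutativity of the cocone forces, for each $i$, the derived sequent $\exists a_i:A_i, z = \iota_{a_i} \vdash_z \psi(z) = \lbrace g_i a_i\rbrace$; combining these with the singleton-collapse computation above gives $\exists a_i:A_i, z = \iota_{a_i} \vdash_z \psi(z) = g_1(z_1) \cup_U \hdots \cup_U g_n(z_n)$ for every $i$, and then \eqref{rule:disjunction} assembles them into
\[
(A_1 \coprod \hdots \coprod A_n)(z) \vdash_z \psi(z) = g_1(z_1) \cup_U \hdots \cup_U g_n(z_n).
\]
As the right-hand side is independent of $\psi$, this forces $\psi$ to equal the morphism built above, giving uniqueness.

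The only genuinely new ingredient beyond Proposition \ref{prop:coproduct} is the behaviour of the iterated union, so the main obstacle is bookkeeping rather than a new idea: one must verify $g_j(\varnothing_{a_j:A_j}) = \varnothing$ in the type theory, and that the $n$-fold $\cup_U$ is associative and absorbs $\varnothing$, so that on each disjunct the union reduces to a single $\lbrace g_i a_i\rbrace$. Once this singleton-collapse step is isolated, the remaining $n$ proof trees are the $n$-indexed analogues of the two trees appearing in the binary proof, and the coordinate projections built from $\operatorname{fst}$ and $\operatorname{snd}$ contribute only notational overhead.
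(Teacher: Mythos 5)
Your proposal is correct and takes exactly the route the paper intends: the paper gives no separate proof of this proposition at all, saying only that the result ``follows by making suitable changes to the case of binary coproducts'' (Proposition \ref{prop:coproduct}). Your write-up is precisely that adaptation---$n$-ary injections in place of two, iterated $\cup_U$ with the singleton-collapse and $\varnothing$-absorption bookkeeping made explicit---so it matches, and indeed supplies more detail than, the paper's own treatment.
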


	\subsection{Coequalisers}
	\label{sec:coequaliser}
	In the topos \underline{Sets} the coequaliser of functions $f,g: A \to B$ is given by $B/R$, where $R$ is the smallest equivalence relation on $B$ such that $(f(a), g(a)) \in R$, for all $a \in A$. We emulate this by taking the subobject of $\Omega^B$ consisting of ``equivalence classes" of $R$. First though, we define a term which simulates $R$:
	\begin{definition}
		\label{def:relation}
		Given terms $t_0,t_1: A \to B$ with a free variable $a:A$ in a type theory, let $R_{t_0,t_1}$ be the term
		\begin{align*}
			\Big\lbrace &z: B \times B \mid \exists b_1, b_2 : B\text{, }z = \langle b_1, b_2 \rangle \wedge \Big(b_1 = b_2\\
			&\vee \bigvee_{n = 1}^\infty\bigvee_{\alpha \in \bb{Z}_2^n}\big( \exists a_1,...,a_n, (b_1 = t_{\alpha_0}[a:= a_1]\\
			&\wedge (t_{\alpha_1 + 1}[a:= a_1] = t_{\alpha_1}[a:= a_2]) \wedge ... \wedge (t_{\alpha_{n-1}+1}[a:= a_{n-1}] = t_{\alpha_{n}}[a:= a_n])\\
			&\wedge (t_{\alpha_{n}+1}[a:= a_n] = b_2)\big)\Big)\Big\rbrace
		\end{align*}
		where $\mathbb{Z}_2^n$ is the set of length $n$ sequences of elements of $\mathbb{Z}_{2}$. 
	\end{definition}
	\begin{remark}
		As mentioned in the Introduction, we require that every object $C$ in the topos in question $\call{E}$ satisfies the property that the category $\operatorname{Sub}(C)$ admits countable disjoint union (this is satisfied, for example, when the topos $\call{E}$ admits all countably infinite colimits). This is needed so that the term given in Definition \ref{def:relation} admits an interpretation inside $\call{E}$ (see Remark \ref{rmk:countable_colimits})
	\end{remark}
	
	The idea of this term is that it is a generalisation of the smallest equivalence relation $\sim$ such that for all $a$, $t_0(a) \sim t_1(a)$. It does this by declaring $\langle b_1,b_2\rangle$ to be in the relation if either $b_1 = b_2$, or there exists a sequence $(a_1,...,a_{n})$ which ``connect" $b_1$ and $b_2$ by the images of these $a_i$ under $t_0$ and $t_1$. A diagram representing an example of this is the following,
	\[
	\begin{tikzcd}[column sep = tiny]
		& a_1\arrow[dl]\arrow[dr] & & a_2\arrow[dl]\arrow[dr] & & a_3\arrow[dl]\arrow[dr]\\
		b_1 = t_0(a_1) & & t_1(a_1) = t_0(a_2) & & t_1(a_2) = t_0(a_3) & & t_1(a_3) = b_2
	\end{tikzcd}
	\]
	However, it must also be allowed for that $t_0$ and $t_1$ do not always appear in this order, due to the symmetry axiom of an equivalence relation, this is why the set $\bb{Z}_2^n$ is considered. The next Definition generalises ``the set of elements related to $b$ under $R_{t_0,t_1}$":
	\begin{definition}
		In the setting of Definition \ref{def:relation}, for any variable $b:B$, define the following term.
		\[[b]_{t_0,t_1} := \lbrace b' : B \mid \langle b, b'\rangle \in R_{t_0,t_1} \rbrace\]
	\end{definition}
	Now to define the coequaliser.
	\begin{definition}
		Let $t_0,t_1$ be terms of type $B$ with $\operatorname{FV}(t_0) = \operatorname{FV}(t_1) = \lbrace a: A \rbrace$. We define the following formula with free variable $z : \call{P}B$.
		\begin{equation}\label{eq:coeq_def}
			\operatorname{Coeq}(t_0,t_1)(z) := \exists b: B, z = [b]_{t_0,t_1}
		\end{equation}
	\end{definition}
	By Lemma \ref{lem:sub_prop}, if the following sequent is provable:
	\begin{equation}\label{eq:coeq_sequent}
		\vdash_{b:B}[b] \in \lbrace z: \call{P}B . \operatorname{Coeq}(t_0,t_1)(z)\rbrace
	\end{equation}
	then there exists a morphism $B \lto \llbracket z:\call{P}B . \operatorname{Coeq}(t_0,t_1)(z)\rrbracket$ such that the following Diagram commutes.
	\begin{equation}\label{eq:coeq_existence_commute}
		\begin{tikzcd}
			B\arrow[r,"{\llbracket b:B. [b]\rrbracket}"]\arrow[dr,swap,"{c}"] & \Omega^B\\
			& \llbracket z:\call{P}B. \operatorname{Coeq}(t_0,t_1)(z)\rrbracket\arrow[u,rightarrowtail]
		\end{tikzcd}
	\end{equation}
	Sequent \eqref{eq:coeq_sequent} is clearly provable, because by the deduction rule \eqref{rule:element} it suffices to derive the following.
	\begin{equation}
		\vdash_{b:B}\operatorname{Coeq}(t_0,t_1)([b])
	\end{equation}
	This follows from the following statement.
	\begin{equation}
		\vdash_{b:B}[b] = [b]
	\end{equation}
	Hence there exists the morphism $c$ in \eqref{eq:coeq_existence_commute}.
	\begin{proposition}
		The pair $(\llbracket z: \call{P}B. \operatorname{Coeq}(t_0,t_1)(z)\rrbracket,c)$ is a coequaliser of $\llbracket a:A. t_0\rrbracket, \llbracket a:A. t_1\rrbracket$.
	\end{proposition}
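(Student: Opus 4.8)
The plan is to verify the three conditions defining a coequaliser: the cofork identity $c\,\llbracket a:A. t_0\rrbracket = c\,\llbracket a:A. t_1\rrbracket$, and the existence and uniqueness halves of the universal property. For the cofork identity, note that the defining monomorphism $\llbracket z:\call{P}B. \operatorname{Coeq}(t_0,t_1)(z)\rrbracket \rightarrowtail \Omega^B$ is monic and that $c$ is the factorisation of $\llbracket b:B. [b]\rrbracket$ through it (Diagram \eqref{eq:coeq_existence_commute}), so it suffices to prove $\llbracket a:A. [t_0]\rrbracket = \llbracket a:A. [t_1]\rrbracket$, where $[t_0],[t_1]$ denote $[b]_{t_0,t_1}$ with $b$ substituted by $t_0,t_1$. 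By the derived rule $(\lbrace\cdot\rbrace I)$ of Proposition \ref{prop:derived_rules} this reduces to deriving $\vdash_{a:A,\,b':B} b' \in [t_0] \Leftrightarrow b' \in [t_1]$, which by the $(\operatorname{Mem})$ rules unfolds to $\langle t_0, b'\rangle \in R_{t_0,t_1} \Leftrightarrow \langle t_1, b'\rangle \in R_{t_0,t_1}$. This follows because $\langle t_0(a), t_1(a)\rangle \in R_{t_0,t_1}$ (take the chain of length one in Definition \ref{def:relation}) together with the symmetry and transitivity built into the definition of $R_{t_0,t_1}$.

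For the existence half I follow the pattern of Proposition \ref{prop:coproduct}. Given $h : B \lto U$ with $h\,\llbracket a:A. t_0\rrbracket = h\,\llbracket a:A. t_1\rrbracket$, I embed $U$ into $\Omega^U$ via Corollary \ref{cor:embedded_subobject} and consider the term $h(z) := \lbrace u:U \mid \exists b:B, b \in z \wedge u = hb\rbrace$ of Definition \ref{def:crucial_terms}, whose interpretation is a morphism $\Omega^B \lto \Omega^U$. The key step is to derive
\[
\operatorname{Coeq}(t_0,t_1)(z) \vdash_{z:\call{P}B} h(z) \in \lbrace v:\call{P}U \mid \exists u:U, v = \lbrace u \rbrace\rbrace,
\]
which says that $h(z)$ is a singleton whenever $z$ is an equivalence class. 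By Lemma \ref{lem:sub_prop} this yields a morphism $\llbracket z:\call{P}B. \operatorname{Coeq}(t_0,t_1)(z)\rrbracket \lto \llbracket z:\call{P}U. \exists u:U, z = \lbrace u \rbrace\rrbracket \cong U$, and a short diagram chase shows that it composes with $c$ to give $h$.

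The uniqueness half mirrors the uniqueness argument of Proposition \ref{prop:coproduct}. Since $U \rightarrowtail \Omega^U$ is monic, any morphism $k$ with $kc = h$ is determined by the composite $\psi$ of $k$ with this monomorphism; using $kc = h$ and the same singleton computation one derives $\operatorname{Coeq}(t_0,t_1)(z) \vdash_{z:\call{P}B} \psi(z) = h(z)$, which forces $\psi$, and hence $k$, to equal the morphism constructed above, as the right-hand side is independent of $k$.

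The main obstacle is the key sequent in the existence step. Establishing that $h$ is constant on $R_{t_0,t_1}$-equivalence classes requires an induction on the chain length $n$ appearing in Definition \ref{def:relation}, discharged through the infinitary elimination rule $(\bigvee E)_i$ of Proposition \ref{prop:derived_rules}, while the quantification over $\alpha \in \mathbb{Z}_2^n$ forces one to track both orientations $t_0(a_i) \sim t_1(a_i)$ and $t_1(a_i) \sim t_0(a_i)$. The hypothesis $h\,\llbracket a:A. t_0\rrbracket = h\,\llbracket a:A. t_1\rrbracket$ collapses both orientations to the single identity $h t_0 = h t_1$, from which the base case and the inductive step both follow.
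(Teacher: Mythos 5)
Your proposal is correct and, on the universal property, takes essentially the same route as the paper: you embed the test object $U$ into $\Omega^U$ via Corollary \ref{cor:embedded_subobject}, form the direct-image term of Definition \ref{def:crucial_terms}, reduce by Lemma \ref{lem:sub_prop} to the sequent asserting that the image of an equivalence class is a singleton, and discharge the infinite disjunction inside $R_{t_0,t_1}$ one disjunct at a time using the hypothesis $h\,\llbracket a:A. t_0\rrbracket = h\,\llbracket a:A. t_1\rrbracket$; your uniqueness argument is the same monicity argument, modelled on Proposition \ref{prop:coproduct}, that the paper uses. The genuine difference is that you also prove the cofork identity $c\,\llbracket a:A. t_0\rrbracket = c\,\llbracket a:A. t_1\rrbracket$, reducing it via monicity of $\llbracket z:\call{P}B. \operatorname{Coeq}(t_0,t_1)(z)\rrbracket \rightarrowtail \Omega^B$ and the rule $(\lbrace\cdot\rbrace I)$ to a derivable biconditional about $R_{t_0,t_1}$; the paper's proof never verifies this identity (it only constructs the factorisation through $c$ and shows it unique), so your treatment is more faithful to the definition of a coequaliser --- just be aware that the appeal to ``symmetry and transitivity built into $R_{t_0,t_1}$'' is itself a chain-prepending argument that must again be run through the infinitary rule, disjunct by disjunct. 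Two small corrections: the rule that dispatches the infinite disjunction is $(\bigvee I)$ of Proposition \ref{prop:derived_rules}, which concludes $\bigvee_{i\geq 0}p_i \vdash_\Sigma q$ from the family $\lbrace p_i \vdash_\Sigma q\rbrace_{i\geq 0}$, not the elimination rule $(\bigvee E)_i$, which goes in the opposite direction; and no induction on the chain length $n$ is actually needed --- for each fixed $n$ and $\alpha \in \bb{Z}_2^n$ the required sequent is obtained directly by chaining the equalities $h(b) = h(t_{\alpha_0}a_1) = \hdots = h(b')$, exactly as in the paper's own derivation of Sequent \eqref{eq:final_leg}.
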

	\begin{proof}
		We let $f_0,f_1$ respectively denote $\llbracket a:A. t_0\rrbracket, \llbracket a:A, t_1\rrbracket$. Say $\varphi: B \lto C$ is a morphism such that $\varphi f_0 = \varphi f_1$. First we prove the existence of a morphism $\llbracket z: \call{P}B. \operatorname{Coeq}(t_0,t_1)(z)\rrbracket \lto C$ rendering the following Diagram commutative.
		\begin{equation}
			\begin{tikzcd}[column sep = large]
				B\arrow[dr,swap,"c"]\arrow[r,"{\llbracket b:B. [b]\rrbracket}"] & \llbracket z: \call{P}B. \operatorname{Coeq}(t_0,t_1)(z)\rrbracket\arrow[d]\\
				& C
			\end{tikzcd}
		\end{equation}
		By Corollary \ref{cor:embedded_subobject} there is an isomorphism $C \lto \llbracket z: \Omega^C \mid \exists c:C, z = \lbrace c \rbrace \rrbracket$ rendering the following Diagram commutative.
		\begin{equation}
			\begin{tikzcd}[column sep = large]
				C\arrow[r,"{\llbracket c:C. \lbrace c \rbrace \rrbracket}"]\arrow[dr] & \Omega^C\\
				& \llbracket z: \call{P}C . \exists c:C, z = \lbrace c \rbrace \rrbracket\arrow[u,rightarrowtail]
			\end{tikzcd}
		\end{equation}
		Hence it suffices to construct a morphism
		\begin{equation}
			\llbracket z: \call{P}B . \operatorname{Coeq}(t_0,t_1)(z)\rrbracket \lto \llbracket z: \call{P}C. \exists c: C, z = \lbrace c \rbrace \rrbracket
		\end{equation}
		We construct this by considering the following commutative Diagram of solid arrows. Recall from Definition \ref{def:crucial_terms} that $\varphi(z)$, where $z:\call{P}B$ is a variable, is notation for the term $\lbrace c:C \mid \exists b:B, b \in z \wedge \varphi(b) = c\rbrace$.
		\begin{equation}\label{eq:coeq_embedding}
			\begin{tikzcd}
				\Omega^B\arrow[r,"{\llbracket z:\call{P}B. \varphi(z)\rrbracket}"] & \Omega^C\\
				\llbracket z: \call{P}B . \operatorname{Coeq}(t_0,t_1)(z)\rrbracket\arrow[u,rightarrowtail]\arrow[r,dashed] & \llbracket z: \call{P}C. \exists c: C, z = \lbrace c \rbrace \rrbracket\arrow[u,rightarrowtail]
			\end{tikzcd}
		\end{equation}
		Then we appeal to Lemma \ref{lem:sub_prop} and reduce to deriving the following sequent.
		\begin{equation}
			\operatorname{Coeq}(t_0,t_1)(z)\vdash_{z:\call{P}B}\varphi(z) \in \lbrace w: \call{P}C, \exists c:C, w = \lbrace c \rbrace\rrbracket
		\end{equation}
		Which by the derived deduction rule \eqref{rule:element} reduces to deriving the following, at this point we also make use of \eqref{eq:coeq_def}.
		\begin{equation}
			\exists b:B, z = [b] \vdash_{z: \call{P}B}\exists c:C, \varphi(z) = \lbrace c \rbrace
		\end{equation}
		By \eqref{rule:existential} of Definition \ref{def:type_theory} and Lemma \ref{lem:variablesub} it suffices to derive the following.
		\begin{equation}
			\vdash_{b:B}\exists c:C, \varphi([b]) = \lbrace c \rbrace
		\end{equation}
		By Lemma \ref{lem:witness} it suffices to derive the following.
		\begin{equation}\label{eq:seemingly_dodgy}
			\vdash_{b:B}\varphi([b]) = \lbrace \varphi(b)\rbrace
		\end{equation}
		In the left hand side of the equality in \eqref{eq:seemingly_dodgy} the term $\varphi([b])$ means the term $\lbrace c:C \mid \exists b':B, c \in [b] \wedge \varphi(c) = b'\rbrace$ and the term on the right hand side of the equality means the term $\lbrace c :C \mid c = \varphi(b) \rbrace$. Hence we must derive the following.
		\begin{equation}
			\vdash_{b:B} \lbrace c:C \mid \exists b':B, b' \in [b] \wedge \varphi(b') = c\rbrace = \lbrace c :C \mid c = \varphi(b) \rbrace
		\end{equation}
		To derive this, it is sufficient by the derived deduction rule \eqref{rule:set} and the deduction rules \eqref{rule:implication}, \eqref{rule:conjunction} to derive the following sequents.
		\begin{align}
			\exists b':B, b' \in [b] \wedge \varphi(b') = c&\vdash_{b : B, c:C} c = \varphi(b)\\
			c = \varphi(b) &\vdash \exists b':B, b' \in [b] \wedge \varphi(b') = c
		\end{align}
		First we derive $\exists b':B, c \in [b] \wedge \varphi(b') = c\vdash_{b : B, c:C} c = \varphi(b)$. The term $[b]$ means $\lbrace b' :B \mid \langle b,b'\rangle \in R_{t_0,t_1}\rbrace$. Using this, along with \eqref{rule:existential} of Definition \ref{def:type_theory}, \eqref{rule:element}, and Lemma \ref{lem:variablesub} it remains to derive the following.
		\begin{equation}
			\langle b,b'\rangle \in R_{t_0,t_1} \vdash_{b:B,b':B} \varphi(b') = \varphi(b)
		\end{equation}
		Unravelling the Definition of $R_{t_0,t_1}$, we see that it remains to derive the following. We write $t_0a_1$ for $t_0[a := a_1]$, and similarly for $t_1, a_2,\ldots$ etc.
		\begin{align*}
			\exists b_1, b_2 : B\text{, }\langle b,b'\rangle = &\langle b_1, b_2 \rangle \wedge \Big(b_1 = b_2\\
			&\vee \bigvee_{n = 1}^\infty\bigvee_{\alpha \in \bb{Z}_2^n}\big( \exists a_1,...,a_n, (b_1 = t_{\alpha_0}a_1)\\
			&\wedge (t_{\alpha_1 + 1}a_1 = t_{\alpha_1}a_2) \wedge ... \wedge (t_{\alpha_{n-1}+1}a_{n-1} = t_{\alpha_{n}}a_{n})\\
			&\wedge (t_{\alpha_{n}+1}a_{n} = b_2)\big)\Big) \vdash_{b:B,b':B} \varphi(b') = \varphi(b)
		\end{align*}
		Which by \eqref{rule:existential}, Lemma \ref{lem:variablesub} ammounts to deriving the following.
		\begin{align*}
			&b = b' \vee \bigvee_{n = 1}^\infty\bigvee_{\alpha \in \bb{Z}_2^n}\big( \exists a_1,...,a_n, (b = t_{\alpha_0}a_1)\\
			&\wedge (t_{\alpha_1 + 1}a_1 = t_{\alpha_1}a_2) \wedge ... \wedge (t_{\alpha_{n-1}+1}a_{n-1} = t_{\alpha_{n}}a_{n})\\
			&\wedge (t_{\alpha_{n}+1}a_{n} = b')\big) \vdash_{b:B,b':B} \varphi(b') = \varphi(b)
		\end{align*}
		Using the derived rule \eqref{rule:infinite} of Proposition \ref{prop:derived_rules} we now have to show for all $n >0$ and $\alpha \in \bb{Z}_2^n$ that the following sequent is derivable.
		\begin{align}\label{eq:final_leg}
			\begin{split}
				&(b = t_{\alpha_0}a_1) \wedge (t_{\alpha_1 + 1}a_1 = t_{\alpha_1}a_2) \wedge ... \wedge (t_{\alpha_{n-1}+1}a_{n-1} = t_{\alpha_{n}}a_{n}) \wedge (t_{\alpha_{n}+1}a_{n} = b') \\
				&\vdash_{b:B,b':B,a_1:A,...,a_n:A} \varphi(b') = \varphi(b)
				\end{split}
		\end{align}
		We can then show the following.
		\begin{align*}
			&(b = t_{\alpha_0}a_1) \wedge (t_{\alpha_1 + 1}a_1 = t_{\alpha_1}a_2) \wedge ... \wedge (t_{\alpha_{n-1}+1}a_{n-1} = t_{\alpha_{n}}a_{n}) \wedge (t_{\alpha_{n}+1}a_{n} = b') \\
			&\vdash_{b:B,b':B,a_1:A,...,a_n:A}
			(\varphi(b) = \varphi(t_{\alpha_0}a_1)) \wedge (\varphi(t_{\alpha_1 + 1}a_1) = \varphi (t_{\alpha_1}a_2))\\
			& \wedge ... \wedge (\varphi(t_{\alpha_{n-1}+1}a_{n-1}) = \varphi(t_{\alpha_{n}}a_{n})) \wedge (\varphi t_{\alpha_n + 1}a_n = \varphi(b'))
		\end{align*}
		Now, the following sequent is valid in the type theory.
		\begin{equation}
			\vdash_{a:A}\varphi (t_0 a) = \varphi (t_1 a)
		\end{equation}
		Hence, the following sequent is derivable.
		\begin{align*}
			&(b = t_{\alpha_0}a_1) \wedge (t_{\alpha_1 + 1}a_1 = t_{\alpha_1}a_2) \wedge ... \wedge (t_{\alpha_{n-1}+1}a_{n-1} = t_{\alpha_{n}}a_{n}) \wedge (t_{\alpha_{n}+1}a_{n} = b') \\
			&\vdash_{b:B,b':B,a_1:A,...,a_n:A}
			\varphi(b) = \varphi(t_{\alpha_0}a_1) = \varphi(t_{\alpha_1 + 1}a_1) = \varphi(t_{\alpha_1}a_2)\\
			& = ... = \varphi(t_{\alpha_{n-1}+1}a_{n-1}) = \varphi(t_{\alpha_{n}}a_{n}) = \varphi (t_{\alpha_n + 1}a_n) = \varphi(b')
		\end{align*}
		Hence, using the deduction rule \eqref{rule:cut} we finally arrive at Sequent \eqref{eq:final_leg}, as required.
		
		To prove uniqueness, we first notice that it suffices to show that there exists a unique morphism
		\begin{equation}
			\psi: \llbracket z: \call{P}B. \operatorname{Coeq}(t_0,t_1)(z)\rrbracket \lto \llbracket z: \call{P}C. \exists c:C, z = \lbrace c \rbrace \rrbracket
		\end{equation}
		such that Diagram \eqref{eq:coeq_embedding} commutes with the horizontal arrow on the bottom row replaced by $\psi$, as the subobject $\llbracket z: \call{P}C. \exists c:C, z = \lbrace c \rbrace\rrbracket \rightarrowtail \Omega^C$. However this is simple, it follows from the fact that the subobject $\llbracket z: \call{P}B. \operatorname{Coeq}(t_0,t_1)(z)\rrbracket$ as well as the horizontal arrow on the bottom row of Diagram \eqref{eq:coeq_embedding} are independent of $\psi$.
	\end{proof}
	
	\subsection{Finite colimits}\label{sec:colimits}
	Let $J = \lbrace A_1,\hdots A_n\rbrace$ be a finite set of objects in a topos $\call{E}$ and let $\lbrace f_1,\hdots, f_m\rbrace$ be a finite set of morphisms where for each $i=1,\hdots, m$ the morphism $f_i$ has domain $\operatorname{dom}f_i$ and codomain $\operatorname{cod}f_i$.
	First, we construct the two coproducts, which are the following subobjects, where $z_1 : (\hdots(\call{P} \operatorname{dom}f_1 \times \call{P} \operatorname{dom}f_2) \times \hdots \times \call{P} \operatorname{dom}f_n)$ and $( \hdots z_2 : (\call{P} A_1 \times \call{P} A_2 ) \times \hdots \times \call{P}A_n)$.
	\begin{equation}
		\begin{tikzcd}
			\big(\hdots (\Omega^{\operatorname{dom}f_1} \times \Omega^{\operatorname{dom}f_2}) \times \hdots \times \Omega^{\operatorname{dom}f_n}\big)\\
			\big( \hdots ( \operatorname{dom}f_1 \coprod \operatorname{dom}f_2) \coprod \hdots \coprod \operatorname{dom}f_n\big)(z_1)\arrow[u,rightarrowtail]
			\end{tikzcd}
		\end{equation}
	\begin{equation}
		\begin{tikzcd}
			\big(\hdots (\Omega^{A_1} \times \Omega^{A_2}) \times \hdots \times \Omega^{A_n}\big)\\
			\big( \hdots ( A_1 \coprod A_2) \coprod \hdots \coprod A_n\big)(z_2)\arrow[u,rightarrowtail]
		\end{tikzcd}
	\end{equation}
	To ease notation, we let $\prod_{i = 1}^n \Omega^{\operatorname{dom}f_i}$ denote $\big(\hdots (\Omega^{\operatorname{dom}f_1} \times \Omega^{\operatorname{dom}f_2}) \times \hdots \times \Omega^{\operatorname{dom}f_n}\big)$, and similarly for $\prod_{i = 1}^n\Omega^{A_i}, (\coprod_{i = 1}^n \operatorname{dom}f_i)(z_1), (\coprod_{i = 1}^n A_i)(z_2)$.
	
	Consider the sequence of objects $(\operatorname{dom}f_1,...,\operatorname{dom}f_m)$. For each object $A_i \in J$ there exists a unique, maximal subsequence $(k^i_1,...,k^i_{l_i})$ of $(\operatorname{dom}f_1,...,\operatorname{dom}f_m)$ satisfying $k^i_1 = \hdots = k^i_{l_i} = A_i$. We use this subsequence to define a term. Let $W_{k_1^i},...,W_{k_{l_i}^i}$ all be variables of type $\Omega^{A_i}$ and consider the following term.
	\begin{equation}
		W_{A_i} := W_{k_1^i} \cup \hdots \cup W_{k_{l_i}^i}
	\end{equation}
	Taking the product over all $i = 1,...,n$ we obtain the following term.
	\begin{equation}
		t_0 := \langle W_{A_1}, \hdots, W_{A_n}\rangle
	\end{equation}
	We let $\Delta$ denote the context $W_{k_1^1}:\call{P}A_1,...,W_{k_{l_1}^1}:\call{P}A_1,...,W_{k_1^n}:\call{P}A_n,...,W_{k_{l_n}^n}:\call{P}A_n$. That is, the underlying set of $\Delta$ is $\operatorname{FV}(W_{A_1}) \cup \hdots \cup \operatorname{FV}(W_{A_n})$.
	\begin{equation}\label{eq:compression}
		\llbracket \Delta. \langle W_{A_1},...,W_{A_n}\rangle \rrbracket: \prod_{i = 1}^m \Omega^{\operatorname{dom}f_i} \lto \prod_{j = 1}^n \Omega^{A_j}
	\end{equation}
	Similarly, there exists a unique, maximal subsequence $(k_1^i,...,k_{l_i}^i)$ of $(\operatorname{cod}f_1,...,\operatorname{cod}f_m)$ satisfying $k_i^1 = \hdots = k_{l_i}^i = A_i$. We define the following term, where $Y_{k_1^i},...,Y_{k_{l_i}^i}$ are all variables of type $\Omega^{A_i}$.
	\begin{equation}
		Y_{A_i} := Y_{k_1^i} \cup \hdots \cup Y_{k_{l_i}^i}: \call{P}A_i
	\end{equation}
	Note, if there are no morphisms $f_i$ such that $\operatorname{cod}f_i = A_i$ then we take $Y_{A_i} := \varnothing$.
	
	We take the product over $i = 1,...,n$ to obtain the following term.
	\begin{equation}
		\langle Y_{A_1},...,Y_{A_n}\rangle
	\end{equation}
	This induces the following morphism, we denote by $\Gamma$ the context $Y_{k_1^1}:A_1,...,Y_{k_{l_1}^1}:A_1,...,Y_{k_1^n}:A_n,...,Y_{k_{l_n}^n}:A_n$. That is, the underlying set of $\Gamma$ is $\operatorname{FV}(Y_{A_1}) \cup \hdots ... \cup \operatorname{FV}(Y_{A_n})$.
	\begin{equation}
		\llbracket \Gamma. \langle Y_{A_1},...,Y_{A_n}\rangle\rrbracket:  \prod_{i = 1}^m \Omega^{\operatorname{cod}f_i} \lto \prod_{j = 1}^n \Omega^{A_j}
	\end{equation}
	We consider another term too; we let $Z_1:\call{P}A_1,...,Z_{m}:\call{P}A_n$ be variables and construct the following.
	\begin{equation}
		\langle f(Z_1),...,f(Z_m)\rangle
	\end{equation}
	This term induces a morphism, we let $\Theta$ denote the context $Z_1:\call{P}\operatorname{dom}f_1,...,Z_{m}:\call{P}\operatorname{dom}f_m$.
	\begin{equation}\label{eq:naive}
		\llbracket \Theta. \langle f_1(Z_1),...,f_m(Z_m)\rangle \rrbracket: \prod_{i = 1}^m \Omega^{\operatorname{dom}f_i} \lto \prod_{i = 1}^m \Omega^{\operatorname{cod}f_i}
	\end{equation}
	We then compose \eqref{eq:compression} and \eqref{eq:naive} to form the following morphism.
	\begin{equation}\label{eq:bottom_eq_map}
		\begin{tikzcd}[column sep = huge]
			\prod_{i = 1}^m \Omega^{\operatorname{dom}f_i}\arrow[rr,"{\llbracket \Theta. \langle f_1(Z_1),...,f_m(Z_m)\rangle\rrbracket}"] && \prod_{i = 1}^m \Omega^{\operatorname{cod}f_i}\arrow[d,"{\llbracket \Gamma. \langle Y_{A_1},...,Y_{A_n}\rangle\rrbracket}"]\\
			&& \prod_{j = 1}^n \Omega^{A_j}
		\end{tikzcd}
	\end{equation}
	We observe that the composite \eqref{eq:bottom_eq_map} can be described by a single term.
	\begin{align}
		\begin{split}
			&\llbracket \Theta. \langle f_1(Z_1),...,f_m(Z_m)\rangle \rrbracket \circ \llbracket \Gamma . \langle Y_{A_1},...,Y_{A_n}\rangle \rrbracket \\
		&= \llbracket \Theta. \langle f_{k_1^1}(Z_{k_1^1}) \cup \hdots \cup f_{k_{l_1}^1}(Z_{k_{l_1}^1}),...,f_{k_1^n}(Z_{k_1^n}) \cup \hdots \cup f_{k_{l_n}^n}(Z_{k_{l_n}^n})\rangle \rrbracket
		\end{split}
	\end{align}
	We give a name to this term.
	\begin{align*}
		t_1 := \langle f_{k_1^1}(Z_{k_1^1}) \cup \hdots \cup f_{k_{l_1}^1}(Z_{k_{l_1}^1}),...,f_{k_1^n}(Z_{k_1^n}) \cup \hdots \cup f_{k_{l_n}^n}(Z_{k_{l_n}^n})\rangle
	\end{align*}
	We now make the observation that the two contexts $\Delta$ and $\Theta$ are equal. In summary of what we have so far we write the following pair of morphisms.
	\begin{equation}
		\begin{tikzcd}
			\prod_{i = 1}^m \Omega^{\operatorname{dom}f_i}\arrow[rr,shift left, "{\llbracket\Theta.  t_0\rrbracket}"]\arrow[rr,swap,shift right,"{\llbracket \Delta. t_1\rrbracket}"] & & \prod_{j = 1}^n\Omega^{A_j} 
		\end{tikzcd}
	\end{equation}
	The next step is to take equivalence classes as per Section \ref{sec:coequaliser}. To do this, we first recall the term representing the relation $t_0(a) \sim t_1(a)$ of Definition \ref{def:relation}, and we consider the following term.
	\begin{equation}\label{eq:equiv_class}
		[b]_{t_0,t_1} = \Big\lbrace b':\prod_{i = 1}^n \call{P}A_i \mid \langle b,b'\rangle \in R_{t_0,t_1}\Big\rbrace
	\end{equation}
	We notice that the term \eqref{eq:equiv_class} is of type $\call{P}\big(\prod_{i = 1}^n \call{P}A_i\big)$ and has free variables $\operatorname{FV}([b]_{t_0,t_1}) = \lbrace b: \prod_{i = 1}^n \call{P}A_i\rbrace$. We now consider its interpretation.
	\begin{equation}
		\begin{tikzcd}[column sep = huge]
			\prod_{i =1 }^n \Omega^{A_i}\arrow[rr,"{\llbracket b:\prod_{i = 1}^n \call{P}A_i. [b]_{t_0,t_1} \rrbracket}"] & & \Omega^{\prod_{i = 1}^n \Omega^{A_i}}
		\end{tikzcd}
	\end{equation}
	To summarise, we now have the following Diagram.
	\begin{equation}\label{eq:colimit_parent}
		\begin{tikzcd}[column sep = huge]
			\prod_{i = 1}^m \Omega^{\operatorname{dom}f_i}\arrow[r,shift left, "{\llbracket\Theta.  t_0 \rrbracket}"]\arrow[r,swap,shift right,"{\llbracket \Delta. t_1\rrbracket}"]& \prod_{j = 1}^n\Omega^{A_j} \arrow[rr,"{\llbracket b:\prod_{i = 1}^n \call{P}A_i. [b]_{t_0,t_1} \rrbracket}"] & & \Omega^{\prod_{i = 1}^n \Omega^{A_i}}\\
			\coprod_{i = 1}^m \Omega^{\operatorname{dom}f_i}\arrow[u,rightarrowtail] & & & \operatorname{Coeq}(t_0,t_1)\arrow[u,rightarrowtail]
		\end{tikzcd}
	\end{equation}
	What we have constructed is the top row, completely described via terms in the Interal Logic of a topos, but we now want to show that this descends to a Diagram describing a colimit.
	
	We consider first the following fragment of Diagram \eqref{eq:colimit_parent}.
	\begin{equation}\label{eq:coproduct_parent}
		\begin{tikzcd}[column sep = huge]
			\prod_{i = 1}^m \Omega^{\operatorname{dom}f_i}\arrow[r,shift left, "{\llbracket\Theta.  t_0 \rrbracket}"]\arrow[r,swap,shift right,"{\llbracket \Delta. t_1\rrbracket}"] & \prod_{j = 1}^n\Omega^{A_j}\\
			\coprod_{i = 1}^m \Omega^{\operatorname{dom}f_i}\arrow[u,rightarrowtail] & \coprod_{j = 1}^n \Omega^{A_j}\arrow[u,rightarrowtail]
		\end{tikzcd}
	\end{equation}
	Sequent \eqref{seq:coproduct_decend} below is derivable in the type theory for $k = 0,1$. In \eqref{seq:coproduct_decend} the term $\langle \varnothing, \hdots, z_i, \hdots, \varnothing \rangle$ is the $m$-fold product with the $j^{\text{th}}$ term equal to $\varnothing_{\operatorname{dom}f_j}$ for $j \neq i$ and with $i^{\text{th}}$ term equal to $z_i$.
	\begin{equation}\label{seq:coproduct_decend}
		\vdash_{z_i : \Omega^{\operatorname{dom}f_i}} t_k\langle \varnothing, \hdots, z_i, \hdots, \varnothing \rangle \in \coprod_{j = 1}^n \Omega^{A_j}
	\end{equation}
	and so it follows from the fact that $\coprod_{i = 1}^m \Omega^{\operatorname{dom}f_i}$ is a coproduct (Proposition \ref{prop:coproduct}) that there exists the dashed morphisms in Diagram \eqref{eq:coproduct_decend}. The square created by deleting the bottom most morphism in both the top and bottom row of \eqref{eq:coproduct_decend} commutes, and similarly the square created by deleting the top most morphism in both the top and bottom row of \eqref{eq:coproduct_decend} commutes.
	\begin{equation}\label{eq:coproduct_decend}
		\begin{tikzcd}[column sep = huge]
			\prod_{i = 1}^m \Omega^{\operatorname{dom}f_i}\arrow[r,shift left, "{\llbracket\Theta.  t_0 \rrbracket}"]\arrow[r,swap,shift right,"{\llbracket \Delta. t_1\rrbracket}"] & \prod_{j = 1}^n\Omega^{A_j}\\
			\coprod_{i = 1}^m \Omega^{\operatorname{dom}f_i}\arrow[u,rightarrowtail]\arrow[r,dashed, shift left]\arrow[r,dashed, shift right] & \coprod_{j = 1}^n \Omega^{A_j}\arrow[u,rightarrowtail]
		\end{tikzcd}
	\end{equation}
	Turning now to the following Diagram of solid arrows.
	\begin{equation}\label{eq:coeq_parent}
		\begin{tikzcd}[column sep = huge]
			\prod_{j = 1}^n\Omega^{A_j} \arrow[rr,"{\llbracket b:\prod_{i = 1}^n \call{P}A_i. [b]_{t_0,t_1} \rrbracket}"] && \Omega^{\prod_{i = 1}^n \Omega^{A_i}}\\
			\coprod_{j = 1}^n \Omega^{A_j}\arrow[u,rightarrowtail]\arrow[rr,dashed] && \operatorname{Coeq}(t_0,t_1)\arrow[u,rightarrowtail]
		\end{tikzcd}
	\end{equation}
	The following Sequent is derivable in the type theory for $k=0,1$.
	\begin{equation}\label{seq:coeq_decend}
		\vdash_{w_i:\Omega^{\operatorname{dom}f_i}}[t_k\langle \varnothing, \hdots, w_i, \hdots, \varnothing \rangle] \in \operatorname{Coeq}(t_0,t_1)
	\end{equation}
	Note: In \eqref{eq:coproduct_decend} the term $\langle \varnothing, \hdots, z_i, \hdots, \varnothing \rangle$ is an $m$-fold product and in \eqref{eq:coeq_parent} the seemingly similar term $\langle \varnothing, \hdots, w_i, \hdots, \varnothing \rangle$ is an $n$-fold product.
	
	Again, using Proposition \ref{prop:coproduct} there exists the dashed morphism in Diagram \eqref{seq:coeq_decend} rendering the Diagram considered as a whole, commutative.
	
	To summarise, and this is indeed our main result, we now have the following commutative Diagram.
	\begin{equation}\label{eq:colimit_complete}
		\begin{tikzcd}[column sep = huge]
			\prod_{i = 1}^m \Omega^{\operatorname{dom}f_i}\arrow[r,shift left, "{\llbracket\Theta.  t_0 \rrbracket}"]\arrow[r,swap,shift right,"{\llbracket \Delta. t_1\rrbracket}"] & \prod_{j = 1}^n\Omega^{A_j} \arrow[rr,"{\llbracket b:\prod_{i = 1}^n \call{P}A_i. [b]_{t_0,t_1} \rrbracket}"] & & \Omega^{\prod_{i = 1}^n \Omega^{A_i}}\\
			\coprod_{i = 1}^m \Omega^{\operatorname{dom}f_i}\arrow[u,rightarrowtail]\arrow[r,shift left, dashed]\arrow[r,dashed, shift right] & \coprod_{j = 1}^n \Omega^{A_j}\arrow[u,rightarrowtail]\arrow[rr,dashed] & & \operatorname{Coeq}(t_0,t_1)\arrow[u,rightarrowtail]
		\end{tikzcd}
	\end{equation}

	\appendix
	\section{Topos theory and logic}\label{sec:topos_struc}
	What is a logical connective? One interpretation of connectives is offered by ZF set theory. The axiom of specification allows for the manipulation of sets via the manipulation of predicates. For instance, if $\varphi(a),\psi(a)$ are formulas with free variable $a$, then the set $\lbrace a\in A \mid \varphi(a) \wedge \psi(a)\rbrace$ is given by the intersection of the sets $\lbrace a \in A \mid \varphi(a)\rbrace \cap \lbrace a \in A \mid \psi(a)\rbrace$.
	
	In fact, this turns out to be a special case of a more general theory. The category $\underline{\operatorname{Sets}}$ is an elementary topos, and indeed any elementary topos $\call{E}$ offers a perspective on connectives. We show in this Section how the logical structure of the connectives $\forall, \exists, \wedge, \vee, \Rightarrow$ is captured by the categorical structure of $\call{E}$.
	\subsection{Quantifiers}
	\begin{definition}\label{def:image}
		Let $f: A \lto B$ be a morphism in a topos $\call{E}$. We consider two copies of $f$ and construct the pushout.
		\begin{equation}
			\begin{tikzcd}
				A\arrow[r,"{f}"]\arrow[d,swap,"{f}"] & B\arrow[d,"{\iota_1}"]\\
				B\arrow[r,swap,"{\iota_2}"] & C
			\end{tikzcd}
		\end{equation}
		We then define the \textbf{image} of $f$ to be the Equaliser of $\iota_1,\iota_2$.
		\begin{equation}
			\operatorname{im}f := \operatorname{Equal}(\iota_1,\iota_2)
		\end{equation}
	\end{definition}
	\begin{theorem}
		\label{adj}
		Let $\call{E}$ be a topos and $f:A \to B$ a morphism. Then the functor $f^{-1}: \text{Sub}(B) \to \text{Sub}(A)$ admits both a left and a right adjoint.
	\end{theorem}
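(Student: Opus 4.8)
The map $f^{-1}$ is order-preserving between the preorders $\text{Sub}(B)$ and $\text{Sub}(A)$, so to exhibit a left (resp.\ right) adjoint it suffices to produce an order-preserving map in the opposite direction together with the two comparison inequalities of a Galois connection; I would treat the two adjoints separately. For the left adjoint, define $\exists_f \colon \text{Sub}(A) \to \text{Sub}(B)$ by $\exists_f(s) = \operatorname{im}(fs)$, the image (Definition \ref{def:image}) of the composite $S \xrightarrow{s} A \xrightarrow{f} B$; this is plainly monotone. To see $\exists_f \dashv f^{-1}$ I would verify, for $S \in \text{Sub}(A)$ and $T \in \text{Sub}(B)$, that $\operatorname{im}(fs) \leq T$ if and only if $S \leq f^{-1}T$. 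Both directions rest on the two universal properties in play: that $\operatorname{im}(fs)$ is the smallest subobject of $B$ through which $fs$ factors, and that $f^{-1}T$ is the pullback $A \times_B T$. A factorization of $fs$ through $T$ together with $s$ induces a map $S \to f^{-1}T$ over $A$; conversely a map $S \to f^{-1}T$ over $A$ exhibits $fs$ as factoring through $T \rightarrowtail B$, forcing $\operatorname{im}(fs) \leq T$.

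For the right adjoint I would first reduce to two cases via the graph factorization $f = \pi_B \circ \langle \operatorname{id}_A, f\rangle$, in which $\langle \operatorname{id}_A, f\rangle \colon A \rightarrowtail A \times B$ is monic and $\pi_B \colon A \times B \to B$ is a product projection. Since pullback along a composite is the composite of pullbacks, $f^{-1} = \langle \operatorname{id}_A, f\rangle^{-1} \circ \pi_B^{-1}$, and right adjoints compose; hence it suffices to construct a right adjoint for a monomorphism and for a product projection, the right adjoint $\forall_f$ being obtained as their composite.

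For a monomorphism $m \colon A \rightarrowtail B$, write $\overline{A}$ and $\overline{S}$ for the subobjects of $B$ obtained by composing $m$ with $\operatorname{id}_A$ and with $s$ respectively. I would set $\forall_m(S) := (\overline{A} \Rightarrow \overline{S})$, the Heyting implication in the Heyting algebra $\text{Sub}(B)$ (Theorem \ref{thm:heyting_sub}). Pulling $T \in \text{Sub}(B)$ back along $m$ corresponds, under the identification of $\text{Sub}(A)$ with the subobjects of $B$ below $\overline{A}$, to the meet $T \wedge \overline{A}$; the required equivalence $m^{-1}T \leq S \Leftrightarrow T \leq \forall_m(S)$ is then precisely the defining adjunction $({-}\wedge \overline{A}) \dashv (\overline{A} \Rightarrow {-})$ of the Heyting structure.

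The projection case is the main obstacle, and it is where the power-object structure enters. Given $S \rightarrowtail A \times B$ with classifying map $\chi_S \colon A \times B \to \Omega$ and exponential transpose $\widehat{\chi_S} \colon B \to \Omega^A$, I would define $\forall_{\pi_B}(S)$ to be the subobject of $B$ classified by $\forall_A \circ \widehat{\chi_S}$, where $\forall_A \colon \Omega^A \to \Omega$ is the internal universal quantifier, namely the classifying map of the monomorphism $\mathbbm{1} \rightarrowtail \Omega^A$ that names the maximal subobject of $A$. The substance of the argument is to prove $\pi_B^{-1}T \leq S \Leftrightarrow T \leq \forall_{\pi_B}(S)$ for every $T \in \text{Sub}(B)$, and I expect this verification to be the delicate step: one transposes the inclusion across the exponential adjunction and invokes the defining property of $\forall_A$, that $\forall_A \circ h$ detects exactly when the subobject of $X \times A$ named by a map $h \colon X \to \Omega^A$ is maximal. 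Under this translation both sides of the claimed equivalence reduce to the single condition that the subobject of the domain of $T$ times $A$ determined by $S$ is the whole of it, which closes the case and hence, on composing the two constructed adjoints, yields the right adjoint $\forall_f$ to $f^{-1}$.
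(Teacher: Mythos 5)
Your proposal is correct, but it supplies something the paper never does: the paper's entire ``proof'' of Theorem \ref{adj} is the citation ``See Johnstone \cite[\S A 1.4.10]{Johnstone}'', so yours is an actual argument where the paper has none. Your left-adjoint half ($\exists_f(s) := \operatorname{im}(fs)$, with the adjunction checked against the universal properties of image and pullback) coincides in content with the unnumbered lemma the paper proves in the appendix after Definition \ref{qunatifiers}, which establishes $\exists_f(m) \cong \operatorname{im}(fm)$ directly from the pushout/equalizer description of Definition \ref{def:image}. Your right-adjoint half --- factor $f$ through its graph, treat a monomorphism via the Heyting implication of $\operatorname{Sub}(B)$, treat a projection via the internal quantifier $\forall_A \colon \Omega^A \to \Omega$, then compose the two right adjoints --- is the standard argument of Mac Lane--Moerdijk \cite[\S IV.9]{MM}, and the key reductions you sketch (identifying $\operatorname{Sub}(A)$ with the subobjects of $B$ below $\overline{A}$ in the mono case; reducing both sides of the projection case to maximality of $(\operatorname{id}_A \times t)^{-1}(S)$) are exactly right. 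What your route buys is self-containedness; what it costs is one dependency you should flag explicitly. You invoke the Heyting implication from Theorem \ref{thm:heyting_sub}, whose exponentials the paper in turn delegates to Johnstone \cite[\S A1.4.13]{Johnstone} --- and in Johnstone's development that implication is itself constructed as $\forall_u(u^{-1}(-))$ from the very adjoints being built here, so unfolding both citations into the same source would be circular. The circle is easily broken (the implication on $\operatorname{Sub}(B)$ can be obtained directly from the internal Heyting algebra structure on $\Omega$, as in \cite[\S IV.8]{MM}, with no mention of $\forall_f$), but your proof is only non-circular if Theorem \ref{thm:heyting_sub} is established that way. Two smaller points also deserve a line of proof rather than assertion: that $fs$ factors through $\operatorname{im}(fs)$ and that $\operatorname{im}(fs)$ is the \emph{least} subobject through which it factors, which with the paper's Definition \ref{def:image} (equalizer of the cokernel pair) is standard but not immediate.
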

	\begin{proof}
		See Johnstone \cite[\S A 1.4.10]{Johnstone}.
	\end{proof}
	\begin{definition}
		\label{qunatifiers}
		Let $f:A \to B$ be a morphism in a topos. Then the left adjoint to $f^{-1}$ will be denoted $\exists_f$, and the right adjoint by $\forall_f$.
	\end{definition}
	The reason why this notation is used, is because in the topos \underline{Set}, these adjoints are given by the following explicit maps
	\begin{gather*}
		\exists_f: \text{Sub}(A) \to \text{Sub}(B)\\
		A' \mapsto \lbrace b \in B \mid \text{ there exists }a \in A'\text{ such that }f(a) = b\rbrace
	\end{gather*}
	and
	\begin{gather*}
		\forall_f: \text{Sub}(A) \to \text{Sub}(B)\\
		A' \mapsto \lbrace b \in B \mid \text{ for all }a \in A\text{ if }f(a) = b\text{ then }a \in A'\rbrace
	\end{gather*}
	\begin{proof}
		See \cite[\S1 9.2]{MM}.
	\end{proof}
	In the general context of an arbitrary topos, the left adjoint is particularly easy to describe.
	\begin{lemma}
		Let $f:A \lto B$ be a morphism and let $m: M \rightarrowtail A \in \operatorname{Sub}(A)$ be a subobject of $A$.
		\begin{equation}
			\exists_f(m) \cong \operatorname{im}(fm)
		\end{equation}
	\end{lemma}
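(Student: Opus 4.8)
The plan is to show that the two subobjects $\exists_f(m)$ and $\operatorname{im}(fm)$ of $B$ satisfy the same universal property, and then to conclude by the fact that two subobjects of $B$ each dominated by the other are isomorphic. The universal property in question is that of being the least subobject $n \rightarrowtail B$ through which the composite $fm$ factors; equivalently, by the pullback description of $f^{-1}$, the least $n$ for which $m \leq f^{-1}(n)$ holds in $\operatorname{Sub}(A)$. Concretely, I would prove that for every $n \in \operatorname{Sub}(B)$ one has $\exists_f(m) \leq n$ if and only if $\operatorname{im}(fm) \leq n$, and then take $n = \exists_f(m)$ and $n = \operatorname{im}(fm)$ in turn.

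For the $\exists_f$ side this is immediate from Theorem \ref{adj} and Definition \ref{qunatifiers}. Since $\exists_f$ is left adjoint to $f^{-1}$ and both $\operatorname{Sub}(A)$ and $\operatorname{Sub}(B)$ are preorders, the adjunction collapses to the biconditional $\exists_f(m) \leq n$ if and only if $m \leq f^{-1}(n)$, valid for all $n \in \operatorname{Sub}(B)$. The link between the two phrasings of the universal property is then a short pullback argument: writing $f^{-1}(n)$ as the pullback of $n : N \rightarrowtail B$ along $f$, a factorisation of $m$ through $f^{-1}(n) \rightarrowtail A$ is, by the universal property of the pullback, the same data as a morphism $h : M \to N$ with $n h = f m$. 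Thus $m \leq f^{-1}(n)$ holds precisely when $fm$ factors through $n$, and combining this with the adjunction gives $\exists_f(m) \leq n$ if and only if $fm$ factors through $n$.

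It remains to match this against $\operatorname{im}(fm)$, that is, to verify $\operatorname{im}(fm) \leq n$ if and only if $fm$ factors through $n$. Here I would use the construction of Definition \ref{def:image}: writing $(\iota_1, \iota_2)$ for the cokernel pair of $fm$ (the pushout of $fm$ with itself), the pushout square yields $\iota_1 (fm) = \iota_2 (fm)$, so $fm$ equalises $\iota_1, \iota_2$ and hence factors through $\operatorname{im}(fm) = \operatorname{Equal}(\iota_1, \iota_2)$; this gives one direction and, applied to $n = \operatorname{im}(fm)$, shows $fm$ itself factors through $\operatorname{im}(fm)$.

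The converse inclusion — that every mono $n$ through which $fm$ factors dominates $\operatorname{im}(fm)$ — is the step I expect to be the main obstacle, since the cokernel-pair-then-equaliser recipe computes the least such subobject only when the ambient category is sufficiently well behaved. I would discharge it by invoking that $\call{E}$, being a topos, is regular, so that the construction of Definition \ref{def:image} coincides with the (regular epi)–mono factorisation of $fm$, whose mono part is by definition the least subobject of $B$ through which $fm$ factors. With the universal property established on both sides, the biconditionals chain together to give $\exists_f(m) \leq n \iff \operatorname{im}(fm) \leq n$ for all $n \in \operatorname{Sub}(B)$, and mutual domination then yields the desired isomorphism $\exists_f(m) \cong \operatorname{im}(fm)$.
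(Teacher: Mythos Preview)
Your argument is correct and reaches the same conclusion, but it is organised differently from the paper's proof and leans on a different piece of background. The paper does not use the existing adjunction of Theorem~\ref{adj} at all: instead it defines a functor $\operatorname{im}(f\,\underline{\phantom{m}})\colon \operatorname{Sub}(A)\to\operatorname{Sub}(B)$, verifies functoriality by hand using the pushout universal property, and then shows directly that this functor is left adjoint to $f^{-1}$; the isomorphism with $\exists_f$ then comes from essential uniqueness of left adjoints. In particular, for the step you flag as the main obstacle (that any mono $n$ through which $fm$ factors dominates $\operatorname{im}(fm)$), the paper does not appeal to regularity of $\call{E}$ as a black box but instead constructs the comparison map explicitly: from a factorisation $fm = n h$ one induces a morphism between the cokernel-pair pushouts $C\to C'$ and then uses the equaliser property of $\operatorname{im}(n)\cong n$ to obtain $\operatorname{im}(fm)\to n$. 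Your route is shorter and conceptually cleaner---you exploit the adjunction already on record and only need the image's universal property pointwise---but it imports the fact that the cokernel-pair--equaliser recipe computes the regular image in a topos, which is exactly the content the paper chooses to unpack by hand.
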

	\begin{proof}
		We will define a functor $\operatorname{im}(f\und{0.2}): \operatorname{Sub}A \lto \operatorname{Sub}B$ and prove that this functor is left adjoint to the functor $f^{-1}: \operatorname{Sub}B \lto \operatorname{Sub}A$, the result will then follow from essential uniqueness of adjoints.
		
		We define the image of a subobject $m: M \rightarrowtail A$ under $\operatorname{im}(f\und{0.2})$ to be the subobject $\operatorname{im}(fm)$ of $B$. Now we define how this functor behaves on morphims.
		
		Let $m: M \rightarrowtail A, n:N \rightarrowtail A$ be subobjects of $A$ and let $\gamma: M \lto N$ be a morphism such that the $m = n\gamma$. Let $i_1,i_2$ denote the morphisms such that $\operatorname{im}(fm) = \operatorname{Equal}(i_1,i_2)$ and let $i_1',i_2'$ denote the morphisms such that $\operatorname{im}(fn) = \operatorname{Equal}(i_1',i_2')$. To define a morphism $\operatorname{im}(fm) \lto \operatorname{im}(fn)$ it suffices to define a morphism $\gamma: C \lto C'$ such that $\gamma i_1 = i_1'$ and $\gamma i_2 = i_2'$. For convenience, we draw the following Diagram, we have denoted by $C$ the object in the pushout $\operatorname{PushOut}(fm,fm)$ and $C'$ for that of $\operatorname{PushOut}(fn,fn)$.
		\begin{equation}
			\begin{tikzcd}[column sep = huge, row sep = huge]
				\operatorname{im}(fm)\arrow[dr] & & C\arrow[d,dashed,"{\gamma'}"]\\
				\operatorname{im}(fn)\arrow[r] & B\arrow[ur,shift left,"{i_1}"]\arrow[ur,shift right,swap,"{i_2}"]\arrow[r,shift left,"{i_1'}"]\arrow[r,shift right,swap,"{i_2'}"] & C'
			\end{tikzcd}
		\end{equation}

		We consider the following Diagram of solid arrows.
		\begin{equation}
			\begin{tikzcd}[column sep = huge]
				N\arrow[rrd, bend left,"{fn}"]\arrow[ddr,swap,bend right,"{fm}"]\\
				& M\arrow[r,"{fm}"]\arrow[d,swap,"{fm}"]\arrow[ul,"{\gamma}"] & B\arrow[d,"{i_1}"]\arrow[ddr,bend left,"{i_1'}"]\\
				& B\arrow[r,swap,"{i_2}"]\arrow[drr,swap,bend right,"{i_2'}"] & C\arrow[dr,dashed,"{\gamma'}"]\\
				& & & C'
			\end{tikzcd}
		\end{equation}
		Since $\operatorname{im}(fm)$ is a pushout, the existence of $\operatorname{im}(f\gamma)$ which renders the Diagram commutative follows from the fact that $fm = fn \gamma$, which follows from the assumption that $m = n \gamma$.
		
		Now we need to show that $\operatorname{im}(f\und{0.2})$ is left adjoint to $f^{-1}$. That is, given $m: M \rightarrowtail A \in \operatorname{Sub}A, r: R \rightarrowtail B \in \operatorname{Sub}B$ along with a morphism $\delta:m \lto f^{-1}(r)$, we need to show that there exists a morphism $\eta_m: m \lto f^{-1}\operatorname{im}(fm)$, natural in $m$, and a unique morphism $\beta:\operatorname{im}(fm) \lto r$ such that the following Diagram commutes.
		\begin{equation}
			\begin{tikzcd}
				m\arrow[r,"{
					\eta_{m}}"]\arrow[dr,swap,"{\delta}"] & f^{-1}\operatorname{im}(fm)\arrow[d,"{f^{-1}\beta}"]\\
				& f^{-1}(r)
			\end{tikzcd}
		\end{equation}
		We make a note that $R \cong \operatorname{im}r$, as $r$ is monic, so it suffices to show the above statement with every instance of $r$ replaced by $\operatorname{im}r$. We overload the notation $r$ and use this to denote the morphism $\operatorname{im}r \rightarrowtail B$ as well as the morphism $R \rightarrowtail B$.
		
		We let $i_1,i_2: B \lto C$ denote the morphisms such that $\operatorname{im}(fm) = \operatorname{Equal}(i_1,i_2)$, let $i_1',i_2': B \lto C'$ denote the morphisms such that $\operatorname{im}(r) = \operatorname{Equal}(i_1',i_2')$. First we define $\eta_m: m \lto f^{-1}\operatorname{im}(fm)$. The natural morphism $\eta_m: M \lto f^{-1}\operatorname{im}(fm)$ is given by pulling back $\operatorname{im}(fm)$ along $f$ and using the fact that $fm$ factors through $\operatorname{im}(fm)$. Naturally follows easily from universality of the objects involved. Now we show the eixstence of the morphism $\beta: \operatorname{im}(fm) \lto r$. Consider the following Diagram of solid arrows.
		\begin{equation}\label{eq:adj_proof}
			\begin{tikzcd}[column sep = huge, row sep = huge]
				&&& C'\\
				&&& C\arrow[u,dashed,"{\zeta}"] \\
				&A\arrow[rr,"{f}"] && B\arrow[u,shift left,"{i_1}"]\arrow[u,shift right, swap,"{i_2}"]\arrow[uu,bend left, "{i_1'}"]\arrow[uu,bend right,swap,"{i_2'}"]\\
				M\arrow[ur,rightarrowtail,"{m}"]\arrow[r,swap,"{\delta}"] & f^{-1}(\operatorname{im}r)\arrow[u,rightarrowtail]\arrow[rr,bend right,swap,"{\alpha}"] & \operatorname{im}(fm)\arrow[ur,rightarrowtail]\arrow[r,dotted,swap,"{\beta}"] & \operatorname{im}r\arrow[u,rightarrowtail,swap,"{r}"]
			\end{tikzcd}
		\end{equation}
		Now, by the universal property of the equaliser $\operatorname{im}r$, to show the existence of an appropriate morphism $\operatorname{im}(fm) \lto \operatorname{im}r$ it suffices to define a morphism $\zeta: C \lto C'$, denoted by a dashed arrow in \eqref{eq:adj_proof}, rendering \eqref{eq:adj_proof} commutative. The objects $C,C'$ are respectively objects of pushouts, and so in turn it suffices to prove that $i_1' f m = i_2' f m$. This follows immediately from the fact that $i_1' r \alpha \delta = i_2' r \alpha \delta$.

	\end{proof}
	
	\subsection{Heyting algebras}
	\begin{definition}
		A \textbf{Heyting algebra} is a set with a partial order, which as a category admits the following.
		\begin{itemize}
			\item Initial and terminal objects.
			\item Binary products, binary coproducts.
			\item All exponentials.
		\end{itemize}
	\end{definition}
	Many examples are given by the following Theorem.
	\begin{theorem}
		\label{thm:heyting_sub}
		Let $\call{E}$ be a topos, and let $E$ be any object of $\call{E}$. Then the set $\text{Sub}(E)$, with partial order given by inclusion, is a Heyting algebra with the relevant structure given as follows.
		\begin{itemize}
			\item Terminal object given by $\text{id}_E: E \to E$.
			\item Initial object given by the monic $0 \rightarrowtail E$, where $0$ is the initial object of the topos $\call{E}$. Note; this morphism $0 \rightarrowtail E$ is necessarily monic as $\call{E}$ is a topos (see \cite[\S A1.4.1]{Johnstone}).
			\item The binary product of subobjects $m: A \rightarrowtail E$ and $n: B \rightarrowtail E$ is given by the triple $(A \times_{E} B,\pi_A,\pi_B)$, which is such that the following is a pullback diagram in $\call{E}$.
			\[
			\begin{tikzcd}
				A \times_E B\arrow[r,"{\pi_A}"]\arrow[d,swap,rightarrowtail,"{\pi_B}"] & A\arrow[d,rightarrowtail,"m"]\\
				B\arrow[r,swap,rightarrowtail,"{n}"] & E
			\end{tikzcd}
			\]
			\item The binary coproduct of subobjects $m: A \rightarrow E$ and $n: B \rightarrow E$ given by the triple $(\operatorname{Im}(m\coprod n), k\iota_A,k\iota_B')$ where $(A \coprod B, \iota_A,\iota_B)$ is a coproduct in $\call{E}$, $k$ is the induced morphism $M \coprod N \lto \operatorname{im}(m \coprod n)$, and $m \coprod n$ is the unique morphism $A \coprod B \to E$ such that the following diagram commutes in $\call{E}$.
			\[
			\begin{tikzcd}[row sep = huge]
				& E\\
				A\arrow[ur,rightarrowtail,"{m}"]\arrow[r,swap,rightarrowtail,"{\iota_A}"] & A \coprod B\arrow[u,"{m \coprod n}"] & B\arrow[ul,swap,rightarrowtail,"{n}"]\arrow[l,rightarrowtail,"{\iota_B}"]
			\end{tikzcd}
			\]
			\item The exponentials are more complicated, see \cite[\S A1.4.13]{Johnstone}.
		\end{itemize}
	\end{theorem}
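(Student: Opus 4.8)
The plan is to treat $\operatorname{Sub}(E)$ as a thin category, in which $m \leq n$ means that the representing mono $m$ factors through $n$, and to verify each clause of the definition of a Heyting algebra in turn; since the category is thin, every factorization that arises is automatically unique, so in each case it suffices merely to exhibit the relevant factorization. For the terminal object I would observe that any $m : A \rightarrowtail E$ factors through $\operatorname{id}_E$ via $m$ itself, so $\operatorname{id}_E$ is the greatest element. For the initial object I would first record that $0 \rightarrowtail E$ is monic because the initial object of a topos is strict (any morphism into $0$ is invertible), as cited from Johnstone \cite[\S A1.4.1]{Johnstone}; then, for any $m : A \rightarrowtail E$, the unique map $0 \lto A$ coming from initiality, composed with $m$, agrees with the canonical $0 \lto E$ by a second appeal to initiality, exhibiting $0 \rightarrowtail E$ as the least element.

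For binary products (meets) I would use the pullback square displayed in the statement. The composite $A \times_E B \rightarrowtail A \rightarrowtail E$ is monic, and the two projections witness $A \times_E B \leq m$ and $A \times_E B \leq n$, so $A \times_E B$ is a lower bound. To see it is the greatest lower bound, given any $k : C \rightarrowtail E$ with $k \leq m$ and $k \leq n$, the factorizations $C \lto A$ and $C \lto B$ over $E$ form a cone on the cospan $A \to E \leftarrow B$, and the universal property of the pullback supplies the required map $C \lto A \times_E B$, that is, $k \leq A \times_E B$.

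For binary coproducts (joins) I would form the coproduct $A \coprod B$ in $\call{E}$ (which exists since $\call{E}$ has finite colimits), take the induced morphism $m \coprod n : A \coprod B \lto E$, and factor it through the mono $\operatorname{im}(m \coprod n) \rightarrowtail E$. The two coproduct inclusions show that $m$ and $n$ both factor through $\operatorname{im}(m \coprod n)$, making it an upper bound. For the least upper bound property, suppose $k : C \rightarrowtail E$ satisfies $m \leq k$ and $n \leq k$; then both $m$ and $n$ factor through $k$, so by the universal property of the coproduct $m \coprod n$ factors through the mono $k$, and by the universal property of the image factorization in a topos (the image being the least subobject through which a morphism factors, as in Definition \ref{def:image}) we conclude $\operatorname{im}(m \coprod n) \leq k$.

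Finally, for the exponentials, that is, the Heyting implication $m \Rightarrow n$, I would appeal directly to Johnstone \cite[\S A1.4.13]{Johnstone}, as the statement itself indicates. This is the genuinely hard part: whereas the finite meets and joins above use only finite limits, finite colimits and the image factorization, the Heyting implication requires the full topos structure, in particular the subobject classifier $\Omega$ together with its internal order, and it is precisely the step that is least formal to construct. Once the implication morphism on $\Omega$ is in hand, the remaining verification that $(-) \wedge m \dashv (m \Rightarrow -)$ is routine, and the monotonicity of all these operations in the order of $\operatorname{Sub}(E)$ follows immediately from their universal properties.
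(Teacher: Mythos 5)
Your proposal is correct, but it takes a genuinely different route from the paper: the paper's entire proof is a citation to Johnstone \cite[\S A1.4]{Johnstone}, whereas you carry out the order-theoretic verifications directly and only defer the Heyting implication to \cite[\S A1.4.13]{Johnstone}. Your verifications of the top element, bottom element, and meets are complete and standard; your treatment of joins is also correct, with one point worth making explicit: you invoke ``the universal property of the image factorization'' as though it were part of Definition \ref{def:image}, but that definition only constructs $\operatorname{im}f$ as the equaliser of the cokernel pair of $f$ and does not assert that it is the least subobject through which $f$ factors. That least-subobject property is where the topos hypothesis genuinely enters the lattice-level argument: one needs that every monomorphism in a topos is regular (an equaliser), so that a mono $k$ through which $m \coprod n$ factors equalises some parallel pair, which then factors through the cokernel pair, forcing $\operatorname{im}(m \coprod n) \leq k$. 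With that supplied, your argument is sound. The trade-off between the two approaches is clear: the paper's citation is economical and lets Johnstone handle all cases (including the implication, which you rightly identify as the only part requiring the subobject classifier) in a uniform framework, while your decomposition makes the result more self-contained and isolates precisely which structure each piece of the Heyting algebra needs --- finite limits for meets, finite colimits plus the image factorization for joins, and the full topos structure only for $\Rightarrow$. Since this paper leans on Theorem \ref{thm:heyting_sub} for the interpretation of $\wedge$, $\vee$, and $\Rightarrow$ in Definition \ref{def:interpretation}, your more explicit version would arguably serve the exposition better than the bare citation.
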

	\begin{proof}
		See \cite[\S A1.4]{Johnstone}.
	\end{proof}
	\begin{definition}
		\label{heytingnotation}
		We denote the initial and terminal objects of a Heyting algebra by $0$ and $1$ respectively. The object corresponding to the binary product of objects $A$ and $B$ will be denoted $A \wedge B$, and the object corresponding the coproduct, $A \vee B$. Lastly, the objects $B^A$ will be denoted $A \Rightarrow B$.
	\end{definition}
	\begin{remark}
		\label{rmk:countable_colimits}
		If $\call{E}$ admits countably infinite coproducts, then for any object $E \in \call{E}$, the category $\operatorname{Sub}(E)$ also admits arbitrary coproducts. See \cite[\S A1.4]{Johnstone}. If $\lbrace p_i \rbrace_{i \geq 0}$ is a countably infinite set of elements of $\operatorname{Sub}(E)$ then the coproduct will be denoted by $\bigvee_{i\geq 0}p_i$.
	\end{remark}
	
	\refs
		\bibitem [May, 1999]{May} J. May, \emph{A Crash Course in Algebraic Topology}, University of Chicago Press, Chicago, 1999.
		\bibitem [Borceux, 1994]{borceux} F. Borceux, \emph{Handbook of Categorical Algebra 1, Basic Category Theory}, University Press, Cambridge, 1994.
		\bibitem [Godel, 1931]{Godel} K. Godel, \emph{Uber formal unentscheidbare Sätze der Principia Mathematica und verwandter Systeme I}, Monatshefte für Mathematik und Physik, 38: 173–198.
		\bibitem [Turing, 1936]{Turing} A. Turing, \emph{On Computable Numbers, with an Application to the Entscheidungsproblem}, Proceedings of the London Mathematical Society, 1936.
		\bibitem [Church, 1936]{Church} A. Church, \emph{A Note on the Entscheidungsproblem}, New York: The Association for Symbolic Logic, Inc., 1936
		\bibitem [Lawvere, 1965]{Lawvere} W. Lawvere, \emph{An Elementary Theory of the Category of Sets}, Proceedings of
		the National Academy of Science of the U.S.A 52, 1506–1511
		\bibitem [Russell, 1903]{russell} B. Russell, A. Whitehead, \emph{The Principles of Mathematics}, Cambridge, Cambridge University Press, 1903,
		\bibitem [Sorensen, Urzyczyn, 2006]{su} M Sorensen, P. Urzyczyn, \emph{Lectures on the Curry-Howard Isomorphism}
		\bibitem [Meordijk, Mac Lane, 1994]{MM} I. Meordijk, S. Mac Lane, \emph{Sheaves in Geometry and Logic}, Springer-Verlag, New York, 1992.
		\bibitem [Lambek, Scott, 1986]{lambekscott} J. Lambek, P.J. Scott, \emph{Introduction to Higher Order Categorical Logic}, Cambridge University Press, New York, 1986.
		\bibitem [Johnstone, 2002]{Johnstone} P. Johnstone, \emph{Sketches of an Elephant; A Topos Theory Compendium}, Clarendon Press, Oxford, 2002
		\bibitem [Mac Lane, 1969]{MacLane} S. MacLane, \emph{Homology}, Springer-Verlag, 1969
		\bibitem [Troiani, 2019]{troiani} W. Troiani, \emph{Simplicial sets are algorithms}, (Masters Thesis) \url{http://therisingsea.org/notes/MScThesisWillTroiani.pdf}
		\bibitem [Mikkelson, 1976]{Mikkelson} C. J. Mikkelson \emph{Lattice Theoretic and Logical Aspects of Elementary Topoi}, Aarhus Universitet, Matematisk Institut (January 1, 1976).
		\bibitem [Pare, 1974]{Pare} R. Paré, \emph{Colimits in Topoi}, Bulletin of the American Mathematical Society, 1974.
		
	\endrefs
\end{document}